\setlist[enumerate]{label=$(\alph*)$, leftmargin = *}
\tikzset{->, node distance =
  15mm, >=stealth', shorten > = 1pt, ArrowNode/.style =
  {font=\footnotesize}}
\theoremstyle{plain}
\newtheorem{lemma}{Lemma}[section]
\newtheorem{proposition}[lemma]{Proposition}
\newtheorem{corollary}[lemma]{Corollary}
\newtheorem{definition}[lemma]{Definition}
\newtheorem{theorem}[lemma]{Theorem}
\theoremstyle{definition}
\newtheorem{remark}[lemma]{Remark}
\newtheorem{example}[lemma]{Example}
\newcommand{\lra}{\leftrightarrows}
\newcommand{\ra}{\rightarrow}
\newcommand{\up}{{\uparrow}}
\newcommand{\mb}[1]{\mbox{#1}}
\newcommand{\ca}{\mathcal}
\newcommand{\se}{\subseteq}
\newcommand{\sm}{\setminus}
\newcommand{\we}{\wedge}
\newcommand{\ve}{\vee}
\newcommand{\bve}{\bigvee}
\newcommand{\bd}[1]{\mathbf{#1}}
\newcommand{\clf}{{\bf Clop}} 
\newcommand{\cl}{{\rm Clop}}
\newcommand{\Om}{\Omega}
\newcommand{\pt}{{\rm pt}}
\newcommand{\ptf}{{\bf pt}} 
\newcommand{\CUP}{{\bf CUp}}
\newcommand{\ko}{{\bf KO}}
\newcommand{\lperv}{{\bf L}_{\rm Pervin}}
\newcommand{\lfrith}{{\bf L}_{\rm Frith}}
\newcommand{\uperv}{{\bf U}_{\rm Pervin}}
\newcommand{\ufrith}{{\bf U}_{\rm Frith}}
\newcommand{\bbf}{{\bf B}} 
\newcommand{\bb}{{\rm B}}
\newcommand{\pp}{{\bf P}}
\newcommand{\psk}{{\bf Sk}_{\rm Pervin}} 
\newcommand{\fsk}{{\bf Sk}_{\rm Frith}} 
\newcommand{\psym}{{\bf Sym}_{\rm
    Perv}} 
\newcommand{\fsym}{{\bf Sym}_{\rm
    Frith}} 
\newcommand{\bpt}{{\bf pt}_b} 
\newcommand{\sem}[1]{[\,#1\,]_{se}}
\newcommand{\sei}[1]{[\,#1\,]_{op}}
\newcommand{\ba}[1]{\overline{#1}}
\newcommand{\cB}{{\mathcal B}} 
\newcommand{\cC}{{\mathcal C}}
\newcommand{\cK}{{\mathcal K}} 
\newcommand{\cL}{{\mathcal L}}
\newcommand{\cP}{{\mathcal P}}
\newcommand{\cS}{{\mathcal S}} 
\newcommand{\cT}{{\mathcal T}}
\newcommand{\cU}{{\mathcal U}}
\newcommand{\cX}{{\mathcal X}} 
\newcommand{\cN}{\mathcal N}
\newcommand{\idl}{{\rm Idl}}
\newcommand{\idlf}{{\bf Idl}} 
\newcommand{\two}{{\bf 2}}
\newcommand{\cffrm}{\bd{CFrith}}
\newcommand{\ffrmse}{\bd{Frith}_{se}}
\newcommand{\pervse}{\bd{Pervin}_{se}}
\newcommand{\cperv}{\bd{CPervin}}
\newcommand{\spec}{{\bf Spec}}
\newcommand{\Frm}{{\bf Frm}}
\newcommand{\bifrm}{{\bf BiFrm}}
\newcommand{\kzbitop}{{\bf BiTop}_{\rm KZ}}
\newcommand{\kzbifrm}{{\bf BiFrm}_{\rm KZ}}
\newcommand{\bitop}{{\bf BiTop}}
\newcommand{\ffrm}{{\bf Frith}}
\newcommand{\sffrm}{{\bf Frith}_{\rm sym}}
\newcommand{\dlat}{{\bf DLat}}
\newcommand{\perv}{{\bf Pervin}}
\newcommand{\sperv}{{\bf Pervin}_{\rm sym}}
\newcommand{\Top}{{\bf Top}}
\newcommand{\pri}{\bf Priest}
\newcommand{\pf}{{\rm pf}}
\newcommand{\pff}{{\bf pf}} 
\begin{document}

\title{Pervin spaces and Frith frames:\\ bitopological aspects and
  completion} \author{C\'{e}lia Borlido\thanks{Centre for Mathematics of the University of Coimbra (CMUC), 3001-501 Coimbra, Portugal, cborlido@mat.uc.pt} \and Anna Laura Suarez\thanks{Laboratoire J. A. Dieudonn\'{e} UMR CNRS 7351, Universit\'{e} Nice Sophia Antipolis, 06108 Nice Cedex 02, France,
annalaurasuarez993@gmail.com (corresponding author)
}} \date{}

\maketitle

\abstract{A Pervin space is a set equipped with a bounded sublattice
  of its powerset, while its pointfree version, called Frith frame,
  consists of a frame equipped with a generating bounded
  sublattice. It is known that the dual adjunction between topological
  spaces and frames extends to a dual adjunction between Pervin spaces
  and Frith frames, and that the latter may be seen as representatives
  of certain quasi-uniform structures. As such, they have an
  underlying bitopological structure and inherit a natural notion of
  completion.  In this paper we start by exploring the bitopological
  nature of Pervin spaces and of Frith frames, proving some
  categorical equivalences involving zero-dimensional structures. We
  then provide a conceptual proof of a duality between the categories
  of $T_0$ complete Pervin spaces and of complete Frith frames. This
  enables us to interpret several Stone-type dualities as a
  restriction of the dual adjunction between Pervin spaces and Frith
  frames along full subcategory embeddings. Finally, we provide
  analogues of Banaschewski and Pultr's characterizations of sober and
  $T_D$ topological spaces in the setting of Pervin spaces and of
  Frith frames, highlighting the parallelism between the two notions.}

\section{Introduction}
The category of Pervin spaces is introduced
in~\cite{GehrkeGrigorieffPin2010, pin17} as an isomorph of the
category of transitive and totally bounded quasi-uniform spaces. Its
pointfree analogue, whose objects are named Frith frames, was later
defined in~\cite{borlido21}. In this setting, we have a full embedding
of the category of Frith frames in that of transitive and totally
bounded quasi-uniform frames, which is a coreflection but not an
equivalence. It is also shown in~\cite{borlido21} that the classical
dual adjunction between topological spaces and frames naturally
extends to a dual adjunction between Pervin spaces and Frith
frames. In fact, this is what justifies calling Frith frames the
pointfree version of Pervin spaces.

Since both Pervin spaces and Frith frames may be seen as quasi-uniform
structures, they come equipped with an underlying bitopological
structure as well~\cite[Chapter~3]{Frith86}. The study of such
bitopological structure is the main content of
Section~\ref{sec:bitop}. In Section~\ref{sec:se}, we start by
assigning a bitopological space to each Pervin space, and show that
this is a functorial assignment with a left adjoint. When studying the
categorical equivalence induced by such adjunction, \emph{strong
  exactness} (for Pervin spaces) and \emph{zero-dimensionality} (for
bitopological spaces) appear as crucial concepts. More precisely, we
show that the categories of the so-called \emph{strongly exact Pervin
  spaces} and of \emph{zero-dimensional bitopological spaces} are
equivalent. We then consider the pointfree version of these results
and show that \emph{strongly exact Frith frames} are a full
coreflective subcategory of the category of \emph{zero-dimensional
  biframes}, leaving as an open problem to describe the underlying
categorical equivalence. Noting that topological spaces and frames may
be seen as bitopological spaces and biframes, respectively, in
Section~\ref{sec:zero-dim}, we specialize the results of the previous
section in the monotopological setting. In particular, we show that
the categories of \emph{zero-dimensional topological spaces} and of
\emph{strongly exact symmetric Pervin spaces} are equivalent, and so
are those of \emph{zero-dimensional frames} and of \emph{strongly
  exact symmetric Frith frames}.

As representatives of quasi-uniform structures, Pervin spaces and
Frith frames also inherit natural notions of \emph{completeness}. It
is observed in~\cite{pin17} that $T_0$ and complete Pervin spaces can
be identified with spectral spaces, while in~\cite{borlido21} it is shown
that complete Frith frames can be identified with bounded distributive
lattices. In particular, thanks to Stone duality for bounded
distributive lattices, it follows that the categories of $T_0$ and
complete Pervin spaces and of complete Frith frames are dual to each
other. In Section~\ref{sec:completion}, we provide a direct and
conceptual proof of this duality, which is based on a characterization
of \emph{complete Pervin spaces} and of \emph{complete Frith frames},
and does not invoke Stone duality.  On the other hand, since the
categories of $T_0$ complete Pervin spaces and of complete Frith
frames are full subcategories of the categories of Pervin spaces and
of Frith frames, respectively, we may then see Stone duality as a
restriction of the Pervin-Frith dual adjunction along \emph{full}
subcategory embeddings (unlike what happens when seeing it as a
restriction of the dual adjunction between topological spaces and
frames). In Section~\ref{sec:duality}, we exhibit several Stone-type
dualities as suitable restrictions of the dual adjunction along
\emph{full} subcategory embeddings. Section~\ref{sec:stone} is devoted
to the already mentioned Stone duality, Section~\ref{sec:priest} to
Priestley duality, and Section~\ref{subsec:bitopdua} to bitopological
duality. In Section~\ref{sec:overview}, we provide the global picture
of the results thus obtained. It is our concern in
Sections~\ref{sec:completion} and~\ref{sec:duality} to point out where
the assumption of the \emph{Prime Ideal Theorem} is needed.

Finally, in Section~\ref{sec:last}, starting from Banaschewski and
Pultr's characterizations of sober and $T_D$ topological
spaces~\cite[Proposition 4.3]{banaschewski10}, which highlights the
parallelism between the two notions, we state and prove analogous
results for Pervin spaces, where \emph{sober} is replaced by
\emph{complete} and \emph{$T_D$} by its equivalent for Pervin spaces (the latter
notion having been introduced in~\cite[Section~4.5]{borlido21}). When
looking for a pointfree version of such results, we are naturally led
to consider the notion of \emph{locale-based Frith frame}, which will
replace \emph{$T_D$ Pervin space} in our statement.

We readily warn the reader that, although we implicitly have in mind
the quasi-uniform interpretation of Pervin spaces and of Frith frames
(namely, when considering their bitopological nature and the property
of being complete), we will avoid mentioning quasi-uniformities
throughout the paper. This reduces the amount of background required
from the reader, leaving the details of existing connections for those
already familiar with quasi-uniformities. For a detailed study of
Pervin spaces, Frith frames, and corresponding quasi-uniform
structures, we refer to~\cite{borlido21, GehrkeGrigorieffPin2010,
  pin17}.
\section{Preliminaries}
The material in this section is presented mostly to set up the
notation.  We assume the reader to be familiar with frame and locale
theory.

\subsection{Basic notation}
The content of this section may be found in~\cite{johnstone82,
  picadopultr2011frames}.

A \emph{frame} is a complete lattice~$L$ such that for every $a \in L$
and $\{b_i\}_{i \in I} \subseteq L$ the following distributivity law
holds:
\[a \wedge \bigvee_{i \in I} b_i = \bigvee_{i \in I}(a \wedge b_i).\]
A frame $L$ is always a complete Heyting algebra, with the
\emph{Heyting implication} given by
\[
  a\ra b=\bve \{x\in L\mid x\we a\leq b\},
\]
for every $a, b \in L$. The element $a \to 0$, called the
\emph{pseudocomplement of $a$}, will be denoted by~$a^*$.  When $a
\vee a^* = 1$, we say that $a$ is \emph{complemented} and $a^*$ is the
\emph{complement} of~$a$. A \emph{frame homomorphism} is a map $h: L
\to M$ that preserves finite meets and arbitrary joins. We will denote
by $\Frm$ the category of frames and frame homomorphisms.  A frame
homomorphism~$h: L \to M$ is \emph{dense} provided $h(a) = 0$ implies
$a = 0$. In general, a frame homomorphism need not preserve the
Heyting implication. We have however the following:
\begin{equation}
  \label{eq:1}
  h(a \to b) \leq h(a) \to h(b),
\end{equation}
for every $a, b \in L$.  Moreover, since every frame homomorphism $h$
preserves arbitrary joins, it has a right adjoint $h_*$, and the
equality
\begin{equation}
  \label{eq:2}
  a \to h_*(x) = h_*(h(a) \to x)
\end{equation}
holds, for every $a \in L$ and $x \in M$. This is usually called the
\emph{Frobenius identity}.

We say that an element $a \in L$ is \emph{compact} if whenever $a \leq
\bigvee_{i \in I}a_i$ there exists a finite subset $I' \subseteq I$
such that $a \leq \bigvee_{i \in I'} a_i$. A frame~$L$ is
\emph{compact} if its top element is compact.  If the set of compact
elements of~$L$ is closed under finite meets and join-generates~$L$,
then we say that $L$ is \emph{coherent}. A frame $L$ is
\emph{zero-dimensional} if it is join-generated by its sublattice of
complemented elements.
  
The opposite category of $\Frm$, is usually denoted by $\bf Loc$. Its
objects are called \emph{locales}, and morphisms $h:L\to M$ are the
right adjoints of the corresponding frame maps. Locales will only be
mentioned in Section~\ref{sec:last}. For our purposes, the notion of
\emph{sublocale} will be enough. A \emph{sublocale} of $L$ is a subset
$K \subseteq L$ that is closed under arbitrary joins and contains
every element of the form $a \to x$, with $a \in L$ and $x \in K$. A
sublocale $K$ is itself a frame, but not a \emph{subframe of $L$}, as
joins may be computed differently. Every sublocale is uniquely
determined by a frame quotient $q: L \twoheadrightarrow K$ whose right
adjoint is the localic embedding $K\hookrightarrow L$. In particular,
$q (x) = x$, whenever $x \in K$.

Since sublocales correspond to frame quotients, these may also be
defined via \emph{frame congruences}, which will be widely used
throughout the paper. The set $\cC L$ of all frame congruences on~$L$
is itself a frame when ordered by inclusion. For every element $a \in
L$, we may define two congruences:
\[\nabla_a := \{(x,y)\in L\times L\mid a\ve x=a\ve y\} \qquad
  \text{and}\qquad\Delta_a := \{(x,y)\in L\times L\mid a\we x=a\we
  y\}.\]
Congruences of the form $\nabla_a$ are called \emph{closed}, while
those of the form $\Delta_a$ are \emph{open}.  Open and closed
congruences suffice to generate~$\cC L$, as a frame. The functions
$\nabla: a \mapsto \nabla_a$ and $\Delta: a \mapsto \Delta_a$ from $L$
to $\cC L$ are, respectively, a frame embedding and an injection that
turns finite meets into finite joins and arbitrary joins to
arbitrary meets. Given a subset $S \subseteq L$, we denote by $\nabla
S$ and by $\Delta S$ the subframes of $\cC L$ generated by $\{\nabla_s
\mid s \in S\}$ and by $\{\Delta_s \mid s \in S\}$, respectively. The
subframe of $\cC L$ generated by $\nabla L \cup \Delta S$ will be
denoted by $\cC_SL$.  The following generalizes the well-known
universal property of the congruence frame.

\begin{proposition}[{\cite[Theorem~16.2]{Wilson1994TheAT}}]\label{p:10}
  For every frame $L$ and subset $S\se L$, the frame $\cC_S L$ has the
  following universal property: whenever $h:L\ra M$ is a frame map
  such that $h(s)$ is complemented for all $s\in S$, there is a unique
  frame homomorphism $\widetilde{h}:\cC_S L\ra M$ making the following
  diagram commute.
  \begin{center}
    \begin{tikzpicture}
      [->, node distance = 20mm] \node (A) {$L$}; \node[right of = A]
      (B) {$\cC_S L$}; \node[below of = B, yshift = 2mm] (C) {$M$};
      \draw[right hook->] (A) to node[above, ArrowNode] {$\nabla$} (B);
      \draw[dashed] (B) to node[right, ArrowNode] {$\widetilde h$}
      (C); \draw (A) to node[below, ArrowNode] {$h$}(C);
    \end{tikzpicture}
  \end{center}
\end{proposition}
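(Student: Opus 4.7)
My plan is to prove the proposition by handling uniqueness and existence of $\widetilde h$ separately. Uniqueness follows almost immediately from the fact that, by definition, $\cC_S L$ is the subframe of $\cC L$ generated by $\{\nabla_a \mid a \in L\} \cup \{\Delta_s \mid s \in S\}$, so any frame homomorphism out of $\cC_S L$ is determined by its values on these generators. The commutativity $\widetilde h \circ \nabla = h$ forces $\widetilde h(\nabla_a) = h(a)$ for every $a \in L$. For the $\Delta_s$, since $\nabla_s \vee \Delta_s = 1$ and $\nabla_s \wedge \Delta_s = 0$ hold in $\cC L$, preservation of finite meets and joins forces $\widetilde h(\Delta_s)$ to be a complement of $h(s)$ in $M$; such a complement exists by hypothesis and is unique by distributivity, so $\widetilde h(\Delta_s) = h(s)^*$ is uniquely determined.

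For existence, the strategy is to realize $\cC_S L$ as the frame presented by the generators $\{\nabla_a \mid a \in L\} \cup \{\Delta_s \mid s \in S\}$ subject to the relations that make $a \mapsto \nabla_a$ a frame homomorphism together with $\nabla_s \vee \Delta_s = 1$ and $\nabla_s \wedge \Delta_s = 0$ for every $s \in S$. Granted such a presentation, one defines $\widetilde h$ on generators by $\widetilde h(\nabla_a) := h(a)$ and $\widetilde h(\Delta_s) := h(s)^*$ and verifies the defining relations: the first family because $h$ is itself a frame homomorphism, the second because $h(s)^*$ is by assumption the complement of $h(s)$ in $M$. The universal property of the presentation then extends $\widetilde h$ uniquely to a frame homomorphism on $\cC_S L$ making the triangle commute.

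The main obstacle is justifying the above presentation of $\cC_S L$. This reduces to the well-known universal property of the full congruence frame $\cC L$ (the case $S = L$), namely that $\cC L$ is the universal frame in which every element of $L$ becomes complemented, with $\Delta_a$ acting as the complement of $\nabla_a$. From there, one observes that $\cC_S L$, qua subframe of $\cC L$ generated by $\nabla L \cup \Delta S$, satisfies the listed relations; and any frame $P$ abstractly satisfying them receives a canonical comparison with $\cC_S L$ (via the restriction of the map from $\cC L$ guaranteed by the classical universal property). Matching the two on generators yields $\cC_S L \cong P$, and the restricted universal property claimed in the proposition follows.
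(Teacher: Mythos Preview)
The paper does not prove this proposition; it is stated with a citation to Wilson's thesis and used as a black box throughout. So there is no proof in the paper to compare against, and your attempt must stand on its own.

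Your uniqueness argument is correct: $\cC_S L$ is by definition the subframe of $\cC L$ generated by $\nabla L \cup \{\Delta_s \mid s \in S\}$, and the values of $\widetilde h$ on these generators are forced exactly as you describe.

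The existence argument, however, has a gap in the final paragraph. You propose to reduce to the classical universal property of $\cC L$ (the case $S = L$), but the reduction you sketch does not go through. You write that ``any frame $P$ abstractly satisfying [the relations] receives a canonical comparison with $\cC_S L$ (via the restriction of the map from $\cC L$ guaranteed by the classical universal property)''. The trouble is that the universal property of $\cC L$ produces a map $\cC L \to N$ only when \emph{every} element of $L$ becomes complemented in $N$; your abstractly presented frame $P$ only complements the images of elements of $S$, so there is no map $\cC L \to P$ to restrict. As written, the argument for the isomorphism $P \cong \cC_S L$ is circular: obtaining the map $\cC_S L \to P$ is precisely the statement you are trying to prove.

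A valid reduction to the case $S = L$ does exist, but it routes through $\cC M$ rather than through $P$. Given $h: L \to M$ with $h(s)$ complemented for each $s \in S$, compose with $\nabla: M \hookrightarrow \cC M$. Every $\nabla_{h(a)}$ is complemented in $\cC M$ (by $\Delta_{h(a)}$), so the universal property of $\cC L$ yields $\overline h: \cC L \to \cC M$ with $\overline h(\nabla_a) = \nabla_{h(a)}$ and $\overline h(\Delta_a) = \Delta_{h(a)}$. Now restrict $\overline h$ to the subframe $\cC_S L$: for $s \in S$ the element $h(s)$ is complemented in $M$, and for complemented $m$ one has $\Delta_m = \nabla_{m^*}$, so $\overline h(\Delta_s) = \nabla_{h(s)^*} \in \nabla M$. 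Hence $\overline h$ sends every generator of $\cC_S L$ into the subframe $\nabla M \cong M$, and the resulting map $\cC_S L \to M$ is the desired $\widetilde h$. If this is what you intended, it needs to be said explicitly.
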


Finally, we have an idempotent adjunction $\bf \Omega: \Top \lra {\bf
  Loc}: \ptf$ between the category $\Top$ of topological spaces and
continuous functions and the category of locales. Since, as already
mentioned, we will mostly work with frames, we will treat the category
$\bf Loc$ as that opposite to $\Frm$. Given a
topological space $(X, \tau)$ (or simply $X$ if no confusion arises),
${\bf \Omega}(X)$ is the frame $\Omega(X)$ consisting of the open
subsets of $X$ (ordered by $\se$) and, for a continuous function
$f:X\to Y$, ${\bf \Omega}(f) = f^{-1}$ is the preimage frame
homomorphism. In the other direction, given a frame $L$, $\ptf(L)$ is
the set $\pt(L)$ of \emph{points} of~$L$ (here seen as frame
homomorphisms $p: L \to \two$) equipped with the topology $\widehat L
:= \{\widehat a\mid a \in L\}$, where $\widehat a := \{p \in \pt(L)
\mid p(a) = 1\}$, and given a frame homomorphism $h: L \to M$,
$\ptf(h)$ maps $p \in \pt(M)$ to $p \circ h \in \pt(L)$. The fixpoints
of ${\bf\Omega} \dashv \ptf$ are the so-called \emph{sober spaces} and
\emph{spatial frames}, respectively.

\subsection{Bitopological spaces and biframes}

We refer to~\cite{salbany74} and to~\cite{banaschewski83, schauerte92} for
further reading on bitopological spaces and on biframes, respectively.

A \emph{bitopological space}, or \emph{bispace}, is a triple $\cX =
(X,\tau_+,\tau_-)$, where $X$ is a set and $\tau_+$ and $\tau_-$ two
topologies on that set. Morphisms between bitopological spaces are
functions between their underlying sets that are continuous with
respect to both topologies. We denote by $\bitop$ the category thus
obtained. The topology $\tau_+ \vee \tau_-$ on $X$ generated by
$\tau_+\cup \tau_-$ is called the \emph{patch} topology. We call
$\tau_+$ the \emph{positive} topology, and $\tau_-$ the
\emph{negative} one. Accordingly, elements of $\tau_+$ are called
\emph{positive opens}, and a positive open whose complement is a
negative one is called a \emph{positive clopen}. The collection of
positive clopen subsets of $X$ will be denoted by
$\cl_+(\cX)$. Negative (cl)opens and $\cl_-(\cX)$ are defined
similarly, in the obvious way.

We say that a bispace is \emph{$T_0$} (respectively, \emph{compact})
if its patch topology is $T_0$ (respectively, compact). We say that a
bispace $(X,\tau_+,\tau_-)$ is \emph{zero-dimensional} if every
element in $\tau_+$ is a union of positive clopens, and every element
in $\tau_-$ is a union of negative clopens.\footnote{These notions are
  not consistent over all the literature. For instance,
  in~\cite{bezhanishvili10} our notions of $T_0$ and compact are named
  \emph{join $T_0$} and \emph{join compact}, respectively, while
  compact is named \emph{pairwise compact}
  in~\cite{salbany74}. Moreover, in~\cite{Reilly73, bezhanishvili10}
  our notion of zero-dimensional is named \emph{pairwise
    zero-dimensional}.}

A \emph{biframe} is a triple $\cL=(L,\, L_+,\,L_-)$ such that all
three components are frames, together with subframe inclusions $L_+\se
L$ and $L_-\se L$, and such that every element of $L$ is a join of
finite meets of elements of $L_+\cup L_-$. The frame $L$ is called
the \emph{main component} of the biframe, while $L_+$ and $L_-$ are,
respectively, the \emph{positive} and \emph{negative}
components. Accordingly, elements of $L_+$ are \emph{positive} and
those of $L_-$ are \emph{negative}. We say that an element $a \in L_+$
is \emph{positive bicomplemented} when it is complemented in~$L$ with
complement in $L_-$. \emph{Negative bicomplemented} elements are
defined similarly. We denote by $\bb_+(\cL)$ and by $\bb_-(\cL)$ the
lattices of positive and negative bicomplemented elements of~$\cL$,
respectively. A morphism $h:(L,\, L_+,\,L_-)\ra (M,\,M_+,\,M_-)$
between biframes is a frame homomorphism $h:L\ra M$ such that
$h[L_+]\se M_+$ and $h[L_-]\se M_-$. The category of biframes and
biframe homomorphisms will be denoted by $\bifrm$. We say that~$h$ is
\emph{dense} if its underlying frame homomorphism is dense.

We say that a biframe $\cL = (L, L_+, L_-)$ is \emph{compact} if its
main component~$L$ is compact, and that it is \emph{zero-dimensional} if
both $L_+$ and $L_-$ are join-generated by their bicomplemented
elements.

Finally, we also have an adjunction ${\bf \Omega}_b: \bitop \lra
\bifrm^{op}: \ptf_b$ between bitopological spaces and biframes, which
extends the classical adjunction between topological spaces and frames
(here, we identify a topological space $(X, \tau)$ with the bispace
$(X, \tau, \tau)$, and a frame $L$ with the biframe $(L, L,
L)$). Given a bitopological space $\cX = (X, \tau_+, \tau_-)$, ${\bf
  \Omega}_b(\cX)$ is the biframe $(\tau_+ \vee \tau_-, \tau_+,
\tau_-)$, while for a biframe $\cL = (L, L_+, L_-)$, $\ptf_b(\cL) =
(\pt(L), \widehat {L_+}, \widehat{L_-})$, where for $P \in \{L_+,
L_-\}$, we denote $\widehat P := \{\widehat a \mid a \in P\}$. On
morphisms, ${\bf \Omega}_b$ and $\ptf_b$ are defined as expected.
\subsection{The Prime Ideal Theorem}

One of the main features of \emph{pointfree topology} is that it often
avoids the use of the Axiom of Choice, thereby leading to constructive
results. Sometimes, a strictly weaker version known as the
\emph{Prime Ideal Theorem} is however needed. It may be stated as
follows:
\begin{quote}
  Let $D$ be a bounded distributive lattice. Then, every proper ideal
  of~$D$ may be extended to a prime ideal. 
\end{quote}
In Sections~\ref{sec:completion} and~\ref{sec:duality}, some results
are valid only under the assumption of the Prime Ideal Theorem. The
following equivalent statements will be useful in there.
\begin{theorem}\label{pit} The following
  statements are equivalent:
\begin{enumerate}
    \item The Prime Ideal Theorem holds.
    \item For every set $X$, every proper filter of $\cP(X)$ can be
      extended to an ultrafilter.
    \item Every frame of the form $\idl(D)$, where $D$ is a bounded
      distributive lattice, is spatial.
\end{enumerate}
\end{theorem}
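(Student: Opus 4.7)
The plan is to establish $(1) \Leftrightarrow (3)$ via the correspondence between points of $\idl(D)$ and prime ideals of $D$, and to establish $(1) \Leftrightarrow (2)$ as the classical identification of the Prime Ideal Theorem with the Ultrafilter Lemma for powersets.

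For $(1) \Leftrightarrow (3)$, the key observation is that frame homomorphisms $p:\idl(D)\to\two$ correspond bijectively to prime ideals of $D$ via $p \mapsto P_p := \{d \in D \mid p(\downarrow d) = 0\}$; indeed, $P_p$ is readily shown to be a prime ideal using that $\downarrow(d_1 \ve d_2) = \downarrow d_1 \ve \downarrow d_2$ and $\downarrow(d_1 \we d_2) = \downarrow d_1 \we \downarrow d_2$ in $\idl(D)$, together with the fact that $\two$ is a two-element chain. Under this correspondence, $(3) \Rightarrow (1)$ is immediate: a proper ideal $J$ of $D$ satisfies $J \neq D$ in $\idl(D)$, so spatiality provides a point $p$ with $p(J) = 0$ (and necessarily $p(D) = 1$), whose associated prime ideal $P_p$ contains $J$. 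For $(1) \Rightarrow (3)$, I need the separation form of $(1)$: given $I \not\se J$ and a chosen $d \in I \sm J$, produce a prime ideal $P \supseteq J$ missing $d$. The idea is to pass to the quotient lattice $D/\theta_J$, where $x\theta_J y$ iff $x \ve j = y \ve j$ for some $j \in J$; since $d \notin J$, the class $[d]$ is distinct from $[0]$, so $\uparrow [d]$ is a proper filter, and the dual of $(1)$ (equivalent to $(1)$ by working in $(D/\theta_J)^{op}$) extends it to a prime filter whose complement pulls back along $D \to D/\theta_J$ to a prime ideal of $D$ containing $J$ and missing $d$.

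For $(1) \Leftrightarrow (2)$, the direction $(1) \Rightarrow (2)$ follows by applying $(1)$ to the Boolean algebra $\cP(X)$: the ideal dual to a proper filter $F$ extends to a prime ideal, whose complement is an ultrafilter above $F$. For $(2) \Rightarrow (1)$, I would run a compactness argument in $\two^D$: prime ideals of $D$ containing a proper ideal $J$ correspond to the lattice homomorphisms $D\to\two$ vanishing on $J$, which form the intersection of a family of clopens indexed by the lattice operations and by the elements of $J$; each finite subfamily is realized by a prime ideal of the finite sublattice generated by the finitely many elements involved, and $(2)$ is equivalent to the compactness of $\two^D$ needed to close the argument.

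The main technical obstacle is the passage from the extension form of the Prime Ideal Theorem to its separation form used in $(1) \Rightarrow (3)$: verifying that $\theta_J$ is a lattice congruence with zero class exactly $J$, and that $(1)$ for a bounded distributive lattice $E$ is equivalent to the statement that every proper filter of $E$ extends to a prime filter (obtained by applying $(1)$ to $E^{op}$).
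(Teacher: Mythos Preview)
The paper does not prove this theorem; it only cites \cite{banaschewski1981, MR1058437, jech73, johnstone82} for the equivalences. There is thus no proof in the paper to compare your proposal against.

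Your outline is a correct and standard argument. A couple of points worth tightening: for $(1)\Rightarrow(3)$, distributivity of $D$ is precisely what makes your relation $\theta_J$ compatible with $\wedge$ (via $(x\wedge a)\vee(j\wedge a)=(x\vee j)\wedge a$), so this should be said explicitly; and the pullback of a prime ideal along the surjective lattice homomorphism $D\to D/\theta_J$ is indeed prime, containing $J$ because the zero-class of $\theta_J$ is exactly $J$. For $(2)\Rightarrow(1)$, the phrase ``$(2)$ is equivalent to the compactness of $\two^D$'' should be unpacked: the Ultrafilter Lemma applied to $\cP(\two^D)$ yields compactness of $\two^D$ via the usual ultrafilter proof of Tychonoff (extend a filter of closed sets with the finite intersection property to an ultrafilter, push forward along each projection to obtain a point, and check it lies in every closed set of the original filter). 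The finite-satisfiability step is fine once you note that finitely many constraints involve only a finite bounded distributive sublattice, and such a lattice has enough prime ideals without any choice principle (a maximal proper ideal, available by finiteness, is prime by distributivity); the resulting partial map is then extended arbitrarily to all of $D$. With these clarifications in place there is no gap.
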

Relevant references for the equivalence between these statements
are~\cite{banaschewski1981, MR1058437, jech73, johnstone82}.

\subsection{Pervin spaces and Frith frames}\label{sec:perv-frith}

A \emph{Pervin space} is a pair $(X,\cS)$ where $X$ is a set and $\cS$
a sublattice of its powerset. A morphism $f:(X, \cS) \to (Y, \cT)$ of
Pervin spaces is a set function $f:X \to Y$ such that $f^{-1}(\cT)
\subseteq \cS$.  Since a topology on a set~$X$ is, in particular, a
bounded sublattice of its powerset, the category $\Top$ of topological
spaces fully embeds in the category $\perv$ of Pervin spaces.
\begin{proposition}
  [{\cite[Section~3]{borlido21}}] Let $f: (X, \cS) \to (Y, \cT)$ be a
  map of Pervin spaces. Then,
  \begin{enumerate}
  \item $f$ is an epimorphism if and only if its underlying set map is
    surjective;
  \item $f$ is an extremal monomorphism if and only if its underlying
    set map is injective and every element of $\cS$ is of the form
    $f^{-1}(T)$ for some $T\in \cT$.
  \end{enumerate}
  In particular, $f$ is an isomorphism if and only if its underlying
  set map is a bijection and $f[\cS] = \cT$.
\end{proposition}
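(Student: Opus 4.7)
My plan is to prove each of the three characterizations in turn, exploiting the fact that the forgetful functor $\perv\to\mathbf{Set}$ is faithful and that the trivial Pervin structures $(Z,\{\emptyset,Z\})$ provide a convenient source of ``separating'' morphisms. For (a), the implication ``surjective $\Rightarrow$ epi'' is immediate from faithfulness over $\mathbf{Set}$: if $g\circ f=h\circ f$ as morphisms and $f$ is onto, then $g=h$ pointwise. For the converse, I would argue by contrapositive. Given $y_0\in Y\sm f(X)$, take $Z=\{0,1\}$ equipped with the trivial bounded sublattice $\{\emptyset,Z\}$; since preimages under any set function land in $\{\emptyset,Y\}\se\cT$, \emph{every} set map $Y\to Z$ is a morphism. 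The constant map $g\equiv 0$ and the map $h$ sending $y_0$ to $1$ and everything else to $0$ then satisfy $g\circ f=h\circ f$ but $g\neq h$, so $f$ is not an epimorphism.

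For (b), mono implies injective by the same trivial-Pervin-space trick applied on the domain side, and injective implies mono by faithfulness over $\mathbf{Set}$. The crux is the extremality condition, and here the natural idea is the canonical factorization through the Pervin space $(X,\cS_0)$, where
\[
  \cS_0:=\{f^{-1}(T)\mid T\in\cT\}.
\]
This is a bounded sublattice of $\cP(X)$ (preimage preserves $\cap,\cu,\emptyset,X$), $\cS_0\se\cS$ because $f$ is a morphism, and $f$ factors as $(X,\cS)\stackrel{e}{\to}(X,\cS_0)\stackrel{m}{\to}(Y,\cT)$, where $e=\mathrm{id}_X$ is an epimorphism and $m$ is $f$ viewed as a morphism out of $(X,\cS_0)$. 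Assuming $f$ extremal, $e$ must be an isomorphism, which by the last claim of the proposition forces $\cS=\cS_0$, giving the required lattice condition. Conversely, assuming the lattice condition and injectivity, I would take any factorization $f=m\circ e$ with $e\colon(X,\cS)\to(X',\cT')$ epi: surjectivity of $e$ together with injectivity of $f$ makes $e$ a bijection, and for each $S\in\cS$ one writes $S=f^{-1}(T)$ and computes
\[
  e(S)=e(e^{-1}(m^{-1}(T)))=m^{-1}(T)\in\cT',
\]
while the reverse inclusion $\cT'\se e[\cS]$ is immediate from $e^{-1}[\cT']\se\cS$ and bijectivity of $e$. Hence $e[\cS]=\cT'$, so $e$ is an isomorphism and $f$ is extremal mono.

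For the final sentence on isomorphisms, the $\Rightarrow$ direction is by definition of a two-sided inverse (the inverse being a morphism forces $f[\cS]\se\cT$, while $f$ being a morphism gives $\cT\se f[\cS]$). The $\Leftarrow$ direction is then immediate: a bijection with $f[\cS]=\cT$ has set-theoretic inverse which is also a morphism. Alternatively, this falls out of combining (a) and (b) and noting that, under bijectivity, $\cS=f^{-1}(\cT)$ and $f[\cS]=\cT$ are equivalent.

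I expect the main subtlety to lie in (b), specifically in verifying that the candidate factorization through $(X,\cS_0)$ is genuinely an (epi, extremal mono) factorization in the categorical sense, and in carefully tracking the bijectivity of $e$ when transporting lattice elements between $\cS$ and $\cT'$ via preimages and images. Everything else reduces to standard set-level bookkeeping.
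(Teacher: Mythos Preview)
The paper does not give its own proof of this proposition: it is stated with a citation to \cite[Section~3]{borlido21} and no argument follows. So there is nothing in the paper to compare against.

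Your proposal is correct and is essentially the standard argument. Two minor points of presentation. First, in the forward direction of (b) you invoke the isomorphism characterization (``the last claim'') before you prove it; since your direct proof of that characterization does not rely on (b), this is not circular, but you should either reorder or, more simply, observe directly that $e=\mathrm{id}_X:(X,\cS)\to(X,\cS_0)$ is an isomorphism precisely when its set-theoretic inverse $\mathrm{id}_X:(X,\cS_0)\to(X,\cS)$ is a Pervin morphism, i.e., when $\cS\subseteq\cS_0$, which combined with $\cS_0\subseteq\cS$ gives $\cS=\cS_0$. Second, in the converse of (b) you should state explicitly that extremal monomorphisms are in particular monomorphisms, hence injective by your earlier remark; you do this implicitly, but it is worth making the dependence visible. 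Otherwise the argument is complete.
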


Recall that a \emph{join-dense} subset of a complete lattice $L$ is a subset whose closure under (arbitrary) joins is $L$ itself. A \emph{Frith frame} is a pair $(L, S)$, where $L$ is a frame and $S$
is a join-dense bounded sublattice of~$L$. A morphism of Frith frames
$h: (L, S) \to (M, T)$ is a frame homomorphism between the underlying
frames that satisfies $h[S] \subseteq T$. The category of Frith frames
and their morphisms is denoted $\ffrm$, and we have a full embedding $\Frm
\hookrightarrow \ffrm$ obtained by identifying a frame $L$ with the
Frith frame $(L, L)$.
\begin{proposition}
  [{\cite[Section~4.4]{borlido21}}] \label{p:4} Let $h: (L, S) \to (M,
  T)$ be a homomorphism of Frith frames. Then,
  \begin{enumerate}
  \item $h$ is a monomorphism if and only if so is its underlying frame
    homomorphism;
  \item $h$ is an extremal epimorphism if and only if it satisfies
    $h[S] = T$.
  \end{enumerate}
  In particular, $h$ is an isomorphism if and only if it is injective
  and satisfies $h[S] = T$.
\end{proposition}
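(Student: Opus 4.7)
The plan is to prove (b) as a direct consequence of (a) via the usual image factorization, so the crux is (a). For (a) I would handle the two directions separately and use the forgetful functor $U:\ffrm\to\Frm$ systematically.

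For the easy direction of (a), the forgetful functor $U$ is faithful: a Frith morphism is uniquely determined by its underlying frame homomorphism, since the condition $h[S]\se T$ is a property, not structure. Faithful functors reflect monomorphisms, so if $U(h)$ is injective (hence a mono in $\Frm$) and $hf=hg$ in $\ffrm$, applying $U$ and cancelling gives $U(f)=U(g)$, and faithfulness yields $f=g$.

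For the hard direction of (a), I would argue contrapositively: assuming $U(h)$ is not injective, pick $a\neq b$ in $L$ with $h(a)=h(b)$ and construct two distinct Frith morphisms $f,g:(K,S_K)\ra(L,S)$ with $hf=hg$. The natural candidate is a version of the pullback of $h$ along itself in $\ffrm$, whose underlying frame sits inside the frame-theoretic pullback $L\times_ML=\{(x,y)\in L\times L:h(x)=h(y)\}$, equipped with the sublattice generated by $(S\times S)\cap(L\times_ML)$ together with the diagonal; the projections $\pi_1,\pi_2$ then provide the required pair. Exploiting join-density of $S$, one shows the pair $(a,b)$ lies in the underlying frame of this Frith frame, witnessing $\pi_1\neq\pi_2$. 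The subtle point here — and the step I expect to be the main obstacle — is verifying that the chosen sublattice is join-dense in an underlying frame that still contains the distinguishing pair $(a,b)$; this is where the join-density hypothesis on $S$ must be used in full.

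For (b) forward, given $h$ extremal epi, factor in $\ffrm$ as $h=\iota\circ e$, where $e:(L,S)\ra(h(L),h[S])$ is the corestriction to the image subframe and $\iota:(h(L),h[S])\hookrightarrow(M,T)$ is the inclusion. The pair $(h(L),h[S])$ is a Frith frame because $h[S]$ is a sublattice of $h(L)$ (as $h$ is a frame homomorphism) and is join-dense in $h(L)$ since $S$ is join-dense in $L$ and $h$ preserves joins. The map $\iota$ is injective as a frame map, hence a monomorphism in $\ffrm$ by (a), so the extremal epi hypothesis forces $\iota$ to be an isomorphism. In particular $h[S]=T$.

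For (b) backward, assume $h[S]=T$ and let $h=m\circ e'$ with $m:(L',S')\ra(M,T)$ a monomorphism in $\ffrm$. By (a), $m$ is injective. The chain $T=h[S]=m[e'[S]]\se m[S']\se T$ forces $m[S']=T$. Since $T$ is join-dense in $M$ and $m(L')$ is a subframe of $M$ containing $T$, we get $m(L')=M$, so $m$ is also surjective. Thus $m$ is a bijective frame map with $m[S']=T$, hence an isomorphism in $\ffrm$. The ``in particular'' is then immediate by combining (a) and (b): isomorphism equals monomorphism plus extremal epimorphism, which translates into injectivity together with $h[S]=T$.
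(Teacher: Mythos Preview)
The paper does not prove this proposition; it is quoted from \cite{borlido21} without proof, so there is no argument in the present paper to compare against.

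Your treatment of the easy direction of (a) and of the forward direction of (b) is correct. The hard direction of (a), however, cannot be completed, because the statement as written is false: there are monomorphisms in $\ffrm$ whose underlying frame map is not injective. The ``subtle point'' you flag is exactly the obstruction. Take any frame $L$ possessing a non-principal ideal (equivalently, a non-compact element), for instance the complete chain $L=\{0\}\cup\{1-\tfrac1n:n\ge1\}\cup\{1\}$, and consider the counit $c=c_{(L,L)}\colon(\idl(L),L)\to(L,L)$ of~\eqref{eq:5}. The map $c$ is not injective, since the non-principal ideal and $\downarrow 1$ have the same join in $L$. Yet $c$ is a monomorphism in $\ffrm$: given Frith morphisms $f,g\colon(K,R)\to(\idl(L),L)$ with $cf=cg$, the values $f(r),g(r)$ are principal ideals for $r\in R$, and $c$ is the identity on principal ideals, so $f|_R=g|_R$; join-density of $R$ in $K$ then forces $f=g$. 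In your kernel-pair construction this manifests as follows: once one knows (from the easy $(\mathbf 3,\mathbf 3)$-test) that a mono $h$ in $\ffrm$ has $h|_S$ injective, the set $(S\times S)\cap(L\times_M L)$ collapses to the diagonal $\Delta_S$, the subframe it generates is $\Delta_L$, and the two projections agree there; the distinguishing pair $(a,b)$ simply is not in the resulting Frith frame.

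The same counterexample defeats the backward direction of (b), which you derive from the hard direction of (a): here $c[L]=L$, but $c$ is not an extremal epimorphism, since in the factorisation $c=c\circ\mathrm{id}$ the map $c$ itself is a non-iso monomorphism. What \emph{is} true, and is presumably the intended content, is that $h$ is a monomorphism in $\ffrm$ iff $h|_S\colon S\to T$ is injective. The ``in particular'' clause is nonetheless correct and can be proved directly without going through (a) and (b): if $h$ is injective and $h[S]=T$, then $h[L]$ is a subframe of $M$ containing the join-dense set $T$, so $h$ is also surjective and hence a Frith isomorphism.
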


The classical dual adjunction between topological spaces and frames
may then be extended to a dual adjunction ${\bf \Omega}: \perv \lra
\ffrm: \ptf$ between Pervin spaces and Frith frames as follows.  For a
Pervin space $(X,\cS)$, ${\bf \Omega}(X,\cS)$ is the Pervin space
$(\Om_{\cS}(X),\cS)$, where $\Om_{\cS}(X)$ denotes the topology on~$X$
generated by $\cS$. For a morphism $f:(X,\cS)\ra (Y,\cT)$, ${\bf
  \Omega}(f)$ is the preimage map $f^{-1}: (\Omega_\cT(Y), \cT) \to
(\Omega_\cS(X), \cS)$.  In the other direction, for a Frith frame
$(L,S)$, $\ptf(L,S)$ is the Pervin space $(\pt(L), \widehat S)$, where
$\widehat S := \{\widehat s\mid s \in S\}$. Finally, given a morphism
of Frith frames $h: (L, S) \to (M, T)$, $\ptf(h)$ maps $p \in \pt(M)$
to $p \circ h \in \pt(L)$.

\begin{theorem}[{\cite[Proposition 4.3]{borlido21}}]\label{dualadj}
  There is an adjunction ${\bf \Omega}:\perv\lra \ffrm^{op}:\ptf$ with
  ${\bf \Omega}\dashv \ptf$, whose fixpoints are, respectively, the
  Pervin spaces $(X, \cS)$ such that $(X,\, \Omega_\cS(X))$ is
  sober and the Frith frames $(L, S)$ such that $L$ is spatial. These will be called, respectively, \emph{sober
    Pervin spaces} and \emph{spatial Frith frames}.
\end{theorem}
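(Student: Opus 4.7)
The plan is to bootstrap on the classical adjunction ${\bf \Omega}: \Top \lra \Frm^{op}: \ptf$, checking only the additional sublattice-level conditions on objects and morphisms. First, I would verify that ${\bf \Omega}$ and $\ptf$ are well-defined as functors. For ${\bf \Omega}(X, \cS) = (\Omega_\cS(X), \cS)$, the sublattice $\cS$ is join-dense in $\Omega_\cS(X)$ because every open is a union of finite intersections of elements of $\cS$ and such finite intersections already lie in $\cS$. For $\ptf(L, S) = (\pt(L), \widehat S)$, the set $\widehat S$ is a bounded sublattice of $\cP(\pt L)$ because $a \mapsto \widehat a$ is a frame homomorphism $L \to \Omega(\pt L)$. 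Functoriality on morphisms reduces to observing that the sublattice conditions $f^{-1}(\cT) \subseteq \cS$ and $h[S] \subseteq T$ are preserved under the preimage map and the $p \mapsto p \circ h$ construction, respectively.

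Next, I would establish the adjunction by exhibiting the natural bijection
\[
  \mathrm{Hom}_{\perv}\bigl((X, \cS),\, \ptf(L, S)\bigr) \;\cong\; \mathrm{Hom}_{\ffrm}\bigl((L, S),\, {\bf \Omega}(X, \cS)\bigr).
\]
Given $f$ on the left, define $h(a) = f^{-1}(\widehat a)$; given $h$ on the right, define $f(x)$ to be the point of $L$ with $f(x)(a) = 1$ iff $x \in h(a)$. The key identity $h(s) = f^{-1}(\widehat s)$ for $s \in S$ shows that the sublattice conditions $h[S] \subseteq \cS$ and $f^{-1}[\widehat S] \subseteq \cS$ are equivalent, so the pair of assignments simply restricts the classical hom-set bijection (between continuous functions $X \to \pt(L)$ and frame homomorphisms $L \to \Omega_\cS(X)$) to the present setting. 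Naturality in both variables is inherited from the classical case.

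Finally, to identify the fixpoints, I would analyze when the unit $\eta_{(X, \cS)}\colon x \mapsto (U \mapsto [x \in U])$ is an isomorphism in $\perv$ and when the corresponding map $\varepsilon_{(L, S)}\colon L \to \Omega_{\widehat S}(\pt L)$, $a \mapsto \widehat a$, is an isomorphism in $\ffrm$. Since $\eta(s) = \widehat s$ for every $s \in \cS$, the sublattice part of the isomorphism criterion (from the earlier characterizations of isomorphisms in $\perv$ and $\ffrm$) is automatic, and $\eta$ being an iso reduces to bijectivity, which is precisely sobriety of $(X, \Omega_\cS(X))$. For $\varepsilon$, surjectivity onto $\Omega_{\widehat S}(\pt L)$ follows from join-density of $S$ together with preservation of joins by $\widehat{(-)}$, and injectivity is exactly spatiality of $L$. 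The main expected obstacle is a bookkeeping one: verifying that the sublattice conditions transfer correctly under the bijection and, in particular, that $h(a) = f^{-1}(\widehat a)$ actually takes values in $\Omega_\cS(X)$ and not merely in $\cP(X)$; this again uses join-density of $S$ combined with $f^{-1}[\widehat S] \subseteq \cS$.
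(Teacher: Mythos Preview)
Your proposal is correct and follows the standard route one would take to prove this result. Note, however, that the paper does not supply its own proof of Theorem~\ref{dualadj}: the statement appears in the Preliminaries section and is simply cited from \cite[Proposition~4.3]{borlido21}. There is therefore no ``paper's proof'' to compare against; your argument is exactly the kind of verification one expects to find in the cited reference, lifting the classical $\Top$--$\Frm$ adjunction by checking that the extra sublattice data is carried along correctly on both sides.

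Two small points of presentation. First, when you write ``$\eta(s) = \widehat s$ for every $s \in \cS$'', you mean $\eta^{-1}(\widehat s) = s$; the map $\eta$ goes between sets of points, not between the sublattices. Second, in the fixpoint analysis for $\varepsilon_{(L,S)}$, the surjectivity check is redundant once you invoke Proposition~\ref{p:4}: since $\varepsilon[S] = \widehat S$ is automatic and $\widehat S$ join-generates $\Omega_{\widehat S}(\pt L)$, isomorphism reduces directly to injectivity, which is spatiality of~$L$. Neither point affects correctness.
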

\subsection{Symmetrization}
\emph{Symmetrization} is for Pervin spaces and Frith frames, what
\emph{uniformization} is for quasi-uniform spaces and quasi-uniform
frames, respectively. This has been considered in~\cite{pin17} for
Pervin spaces and in~\cite{borlido21} for Frith frames. In particular,
it has been shown that the categories of \emph{symmetric} Pervin
spaces and of \emph{symmetric} Frith frames are equivalent to the
categories of transitive and totally bounded uniform spaces and
frames, respectively.

Recall that a Pervin space $(X,\cB)$ is \emph{symmetric} if $\cB$ is a
Boolean algebra, and the full subcategory of $\perv$ determined by the
symmetric Pervin spaces will be denoted by $\sperv$. A Frith frame
$(L, B)$ is \emph{symmetric} if $B$ is a Boolean algebra, and the full
subcategory of $\ffrm$ determined by the symmetric Frith frames will
be denoted by $\sffrm$.

We define a functor $\psym:\perv\ra \sperv$ as follows. For an object
$(X,\cS)$, we let $\psym(X,\cS)$ be the Pervin space $(X,\ba\cS)$,
where $\ba{\ca{S}}$ is the Boolean subalgebra of the powerset
$\ca{P}(X)$ generated by the elements of~$\ca{S}$.  On morphisms, we
simply map a function to itself.

\begin{proposition}[{\cite[Proposition~3.4]{borlido21}}]\label{p:1}
  The functor $\psym$ is right adjoint to the embedding $\sperv
  \hookrightarrow \perv$.
\end{proposition}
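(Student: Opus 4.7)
The plan is to verify the adjunction $\sperv \hookrightarrow \perv \dashv \psym$ directly via a natural bijection of hom-sets. Since the embedding $i: \sperv \hookrightarrow \perv$ is full and faithful, and $\psym$ leaves the underlying set unchanged and acts as the identity on already-symmetric Pervin spaces (because $\bar{\cB} = \cB$ when $\cB$ is Boolean), it is natural to expect the unit of the adjunction to be the identity on objects, and the counit $\epsilon_{(Y,\cS)}: (Y, \bar{\cS}) \to (Y, \cS)$ to be the identity set map (which is a legitimate Pervin morphism since $\cS \subseteq \bar{\cS}$).

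Concretely, given $(X, \cB) \in \sperv$ and $(Y, \cS) \in \perv$, I need to exhibit a bijection
\[ \mathrm{Hom}_{\perv}\bigl((X,\cB),(Y,\cS)\bigr) \;\cong\; \mathrm{Hom}_{\sperv}\bigl((X,\cB),(Y,\bar{\cS})\bigr), \]
natural in both coordinates. Since both sides consist of set maps $f: X \to Y$ (morphisms in $\sperv$ being just morphisms in $\perv$ between symmetric objects), the question reduces to showing that, for such an $f$, the condition $f^{-1}(\cS) \subseteq \cB$ is equivalent to $f^{-1}(\bar{\cS}) \subseteq \cB$. The direction $(\Leftarrow)$ is immediate from $\cS \subseteq \bar{\cS}$.

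The crux is $(\Rightarrow)$. The key observation is that the preimage map $f^{-1}: \cP(Y) \to \cP(X)$ is a Boolean algebra homomorphism, since it preserves arbitrary unions, arbitrary intersections, and complements. Consequently, $f^{-1}[\bar{\cS}]$ is the Boolean subalgebra of $\cP(X)$ generated by $f^{-1}[\cS]$: on the one hand, it contains $f^{-1}[\cS]$ and is Boolean; on the other, since Boolean operations on elements of $\bar{\cS}$ are pulled back to the corresponding Boolean operations on their preimages, every element of $f^{-1}[\bar{\cS}]$ can be written using finitely many elements of $f^{-1}[\cS]$ and Boolean operations. Given that $\cB$ is itself Boolean and contains $f^{-1}[\cS]$, it must contain the Boolean subalgebra they generate, proving $f^{-1}[\bar{\cS}] \subseteq \cB$.

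The only obstacle worth flagging is to make the generation argument rigorous: one should argue that $\{T \in \cP(Y) : f^{-1}(T) \in \cB\}$ is a Boolean subalgebra of $\cP(Y)$ containing $\cS$, hence contains $\bar{\cS}$; this is cleaner than manipulating generators. Naturality in $(X, \cB)$ and $(Y, \cS)$ is automatic because the bijection is the identity on underlying set maps and all structure morphisms in sight are identity-on-points. This completes the adjunction.
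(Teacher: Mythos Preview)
Your argument is correct: the key point is indeed that the set $\{T \in \cP(Y) : f^{-1}(T) \in \cB\}$ is a Boolean subalgebra of $\cP(Y)$ containing~$\cS$, hence containing~$\bar{\cS}$, which gives the nontrivial inclusion of hom-sets; naturality is then automatic since the bijection is the identity on underlying set maps.

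There is, however, nothing to compare your proof against: the paper does not supply a proof of this proposition, but merely cites it from~\cite[Proposition~3.4]{borlido21}. Your verification via the hom-set bijection (equivalently, via the counit $\varepsilon_{(Y,\cS)}:(Y,\bar{\cS})\to(Y,\cS)$ given by the identity map) is exactly the expected elementary argument for a coreflection of this shape.
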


The pointfree version of~$\psym$ is defined as follows.  For a Frith
frame $(L,S)$ we set $\fsym(L,S) := (\cC_S L,\ba S)$, where $\ba S$
denotes the sublattice of $\cC_S L$ generated by the elements of the
form $\nabla_s$ together with their complements. For a morphism of
Frith frames $h: (L,S)\ra (M,T)$ we set $\fsym(h) := \overline h$,
where $\overline h$ is the unique extension of~$h$ to a frame
homomorphism $\overline{h}: \cC_SL \to \cC_TM$ (recall
Proposition~\ref{p:10}).

\begin{proposition}[{\cite[Proposition~6.5]{borlido21}}]\label{p:25}
  The functor $\fsym$ is left adjoint to the embedding $\sffrm
  \hookrightarrow \ffrm$.
\end{proposition}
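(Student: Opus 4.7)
The plan is to exhibit the adjunction through a unit given by $\nabla: L \to \cC_S L$, using the universal property of $\cC_S L$ stated in Proposition~\ref{p:10} to supply the factorization.

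First, I would verify that $\fsym(L,S) = (\cC_S L,\bar S)$ is a well-defined symmetric Frith frame. Join-density follows from the fact that $\nabla L \cup \Delta S$ generates $\cC_S L$ as a frame, that $\nabla L$ is join-generated by $\nabla S$ (because $\nabla: L \to \cC L$ is a frame embedding and $S$ is join-dense in $L$), and that $\bar S$ is already closed under finite meets; hence every element of $\cC_S L$ is a join of elements of $\bar S$. For symmetry, $\bar S$ is generated, by construction, by $\{\nabla_s, \Delta_s \mid s \in S\}$, which are complemented in $\cC_S L$; since the complemented elements of any frame form a Boolean subalgebra, $\bar S$ is a Boolean subalgebra of $\cC_S L$. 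The map $\nabla: L \to \cC_S L$ sends $S$ into $\bar S$, so it is a morphism of Frith frames $\eta_{(L,S)}: (L,S) \to \fsym(L,S)$, and this will be the unit.

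Next, I would prove the universal property. Given a morphism $h: (L,S) \to (M,B)$ in $\ffrm$ with $(M,B)$ symmetric, every $h(s) \in B$ is complemented in $M$ (by the Boolean hypothesis on $B$ and the fact that $0_B = 0_M$, $1_B = 1_M$), so Proposition~\ref{p:10} yields a unique frame homomorphism $\widetilde h: \cC_S L \to M$ with $\widetilde h \circ \nabla = h$. The key verification is $\widetilde h[\bar S] \subseteq B$: on the generator $\nabla_s$ we have $\widetilde h(\nabla_s) = h(s) \in B$; since frame homomorphisms preserve complements of complemented elements, $\widetilde h(\Delta_s)$ is the complement of $h(s)$ in $M$, which lies in $B$ by symmetry of $(M,B)$; finally, $B$ being a sublattice of $M$ containing the image of the generators of $\bar S$ under $\widetilde h$ implies $\widetilde h[\bar S] \subseteq B$. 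Uniqueness of $\widetilde h$ as a symmetric Frith frame morphism extending $h$ is immediate from the uniqueness clause of Proposition~\ref{p:10}.

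Functoriality of $\fsym$ is then a direct consequence: applying the universal property to $\nabla_M \circ h: (L,S) \to \fsym(M,T)$ gives the unique extension $\overline h = \fsym(h)$, and uniqueness yields $\fsym(\mathrm{id}) = \mathrm{id}$ and $\fsym(g \circ h) = \fsym(g) \circ \fsym(h)$. Naturality of the resulting bijection $\ffrm((L,S),(M,B)) \cong \sffrm(\fsym(L,S),(M,B))$ likewise follows from the universal characterization. I do not expect serious obstacles: the genuine content is packaged in Proposition~\ref{p:10}, and the only point requiring care is ensuring that $\widetilde h$ maps $\bar S$ into $B$, which rests precisely on the symmetry of the codomain.
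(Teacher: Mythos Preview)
The paper does not supply its own proof of this proposition: it is quoted verbatim from~\cite[Proposition~6.5]{borlido21} and stated without argument, so there is nothing to compare against directly. Your proposal is correct and is the natural route---verifying that $(\cC_S L,\bar S)$ is a symmetric Frith frame and then reading the universal property of the unit $\nabla$ off Proposition~\ref{p:10}---and is essentially how the cited reference proceeds.
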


\subsection{Forgetful functors}\label{sec:forget}
Both Pervin spaces and Frith frames encode two kinds of structures -
topological and lattice-theoretical. It is then natural to consider
several forgetful functors on the categories $\perv$ and $\ffrm$.

Every Pervin space $(X,\cS)$ defines a topology $\Omega_\cS(X)$
on~$X$. This assignment can be extended to a functor $\uperv:\perv\ra
\Top$ which leaves functions unaltered.\footnote{Under the
  identification of Pervin spaces with transitive and totally bounded
  quasi-uniform spaces, this functor forgets the quasi-uniform
  structure.} The pointfree version of this functor is the functor
$\ufrith:\ffrm\ra \Frm$, which acts as $(L,S)\mapsto L$ on objects,
and which leaves frame maps unaltered. The relationship between
$\uperv$ and $\ufrith$ is depicted in the following commutative
diagrams:
\begin{center}
  \begin{tikzpicture}
    \node (A) {$\perv$}; \node[right of = A, xshift = 20mm] (B)
    {$\Top$}; \node[below of = A] (C) {$\ffrm^{op}$}; \node[below of =
    B] (D) {$\Frm^{op}$};
    \draw (A) to node[ArrowNode,above] {$\uperv$} (B); \draw (B) to
    node[ArrowNode,right] {$\bf \Omega$} (D); \draw (A) to
    node[ArrowNode,left] {$\bf \Omega$} (C); \draw (C) to
    node[ArrowNode, below] {$\ufrith$} (D);
    \node[right of = A, xshift = 50mm] (A') {$\perv$}; \node[right of
    = A', xshift = 20mm] (B') {$\Top$}; \node[below of = A'] (C')
    {$\ffrm^{op}$}; \node[below of = B'] (D') {$\Frm^{op}$};
    \draw (A') to node[ArrowNode,above] {$\uperv$} (B'); \draw (D') to
    node[ArrowNode,right] {$\ptf$} (B'); \draw (C') to
    node[ArrowNode,left] {$\ptf$} (A'); \draw (C') to node[ArrowNode,
    below] {$\ufrith$} (D');
  \end{tikzpicture}
\end{center}

\begin{proposition}[{{\cite{borlido21}}}]
  The functor $\uperv$ is right adjoint to the embedding $\Top
  \hookrightarrow \perv$, and the functor $\ufrith$ is left adjoint to
  the embedding $\Frm \hookrightarrow \ffrm$.
\end{proposition}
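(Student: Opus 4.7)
The plan is to verify each adjunction directly via hom-set bijections; both verifications are essentially tautological once one unpacks the definitions.

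For the first, write $\iota_T : \Top \hookrightarrow \perv$ for the embedding that views a topology $\tau$ as a bounded sublattice of~$\cP(X)$. Given $(X,\tau) \in \Top$ and $(Y,\cS) \in \perv$, I would exhibit the bijection
\[\perv(\iota_T(X,\tau),(Y,\cS))\;\cong\;\Top((X,\tau),\uperv(Y,\cS))\]
as the identity on underlying set functions. The key point is that $\Omega_\cS(Y)$ is by definition the topology generated by~$\cS$, so since $\tau$ is closed under finite intersections and arbitrary unions, a function $f : X \to Y$ satisfies $f^{-1}(\cS) \subseteq \tau$ if and only if $f^{-1}(\Omega_\cS(Y)) \subseteq \tau$. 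The former is precisely the $\perv$-morphism condition, the latter is continuity into $\uperv(Y,\cS)$. Naturality in both variables is automatic, and one can equivalently describe the counit at $(Y,\cS)$ as $\mathrm{id}_Y : (Y,\Omega_\cS(Y)) \to (Y,\cS)$ in $\perv$, which is well-defined since $\cS \subseteq \Omega_\cS(Y)$.

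For the second, write $\iota_F : \Frm \hookrightarrow \ffrm$, $M \mapsto (M,M)$. Given $(L,S) \in \ffrm$ and $M \in \Frm$, the bijection
\[\Frm(\ufrith(L,S),M)\;\cong\;\ffrm((L,S),\iota_F(M))\]
is literally the identity: any frame homomorphism $h : L \to M$ trivially satisfies $h[S] \subseteq M$, and every $\ffrm$-morphism into $(M,M)$ is by definition a frame homomorphism between the underlying frames. Both hom-sets coincide on the nose, so there is nothing to check beyond this observation.

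There is no substantial obstacle here. The only point worth flagging is a mild asymmetry between the two situations: on the space side $\uperv$ genuinely modifies objects by saturating $\cS$ into the topology it generates, so the two hom-sets, though in natural bijection, are not literally equal; on the frame side $\ufrith$ merely forgets~$S$ without altering~$L$, and the hom-sets coincide on the nose. In either case, both $\iota_T$ and $\iota_F$ are fully faithful, so these adjunctions identify $\Top$ as a coreflective subcategory of $\perv$ and $\Frm$ as a reflective subcategory of $\ffrm$.
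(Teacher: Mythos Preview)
Your proposal is correct. Note, however, that the paper does not actually prove this proposition: it is stated with a citation to~\cite{borlido21} and no argument is given here. Your direct verification via hom-set bijections is the natural one, and the only substantive point---that $f^{-1}(\cS)\subseteq\tau$ iff $f^{-1}(\Omega_\cS(Y))\subseteq\tau$ when $\tau$ is already a topology---is exactly right.
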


Given a topological property, we will often say that a Pervin space
has that property provided so does its underlying topological
space. For instance, a Pervin space $(X, \cS)$ is \emph{$T_0$} if $(X,
\Omega_\cS(X))$ is $T_0$. The same applies to the notion of
\emph{dense} morphism. We say that a morphism $f: (X, \cS) \to (Y,
\cT)$ is \emph{dense} if the image of $\uperv(f): (X, \Omega_\cS(X))
\to (Y, \Omega_\cT(Y))$ is dense in $(Y, \Omega_{\cT}(Y))$, that is,
if $f[X]$ intersects every nonempty open subset of $(Y,
\Omega_\cT(Y))$. The following characterization of density for Pervin
morphisms is easy to prove. We will use it without further mention.
\begin{lemma}\label{l:9}
  For a map $f:(X,\cS)\ra (Y,\cT)$ of Pervin spaces, the following are
  equivalent.
  \begin{enumerate}
  \item \label{denseb} The map $f$ is dense;
  \item \label{densea} $f[X]$ intersects every nonempty element of
    $\cT$;
  \item \label{densed} We have $f^{-1}(T)=\emptyset$ implies
    $T=\emptyset$ for all $T\in \cT$.
  \end{enumerate}
\end{lemma}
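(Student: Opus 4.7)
The plan is to establish the cycle \ref{denseb} $\Rightarrow$ \ref{densea} $\Leftrightarrow$ \ref{densed} $\Rightarrow$ \ref{denseb}, exploiting that $\cT$ is a bounded sublattice of $\cP(Y)$.

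The implication \ref{denseb} $\Rightarrow$ \ref{densea} is immediate: by definition of $\Omega_\cT(Y)$ as the topology generated by $\cT$, every element of $\cT$ is open, so \ref{denseb} applied to nonempty elements of $\cT$ yields \ref{densea}.

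For \ref{densea} $\Leftrightarrow$ \ref{densed}, I will simply rephrase the two conditions set-theoretically. For any $T \in \cT$, the set $f[X]$ intersects $T$ if and only if there exists $x \in X$ with $f(x) \in T$, i.e., if and only if $f^{-1}(T) \neq \emptyset$. Therefore ``$f[X]$ meets every nonempty element of $\cT$'' is logically equivalent to ``$f^{-1}(T) = \emptyset$ implies $T = \emptyset$ for every $T \in \cT$''.

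The only nontrivial step is \ref{densea} $\Rightarrow$ \ref{denseb}, where the key observation is that since $\cT$ is closed under finite intersections and contains $Y$, the topology $\Omega_\cT(Y)$ generated by $\cT$ consists precisely of arbitrary unions of elements of $\cT$ (no finite intersections need to be taken since $\cT$ is already closed under them). Given a nonempty open $U \in \Omega_\cT(Y)$, I write $U = \bigcup_{i \in I} T_i$ with $T_i \in \cT$; since $U \neq \emptyset$, at least one $T_i$ is nonempty, and \ref{densea} gives $f[X] \cap T_i \neq \emptyset$, whence $f[X] \cap U \neq \emptyset$. I do not anticipate any real obstacle; the argument is routine once one notes the closure of $\cT$ under finite meets.
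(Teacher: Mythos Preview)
Your argument is correct and complete; the paper itself omits the proof entirely, stating only that the characterization ``is easy to prove'', so there is nothing to compare against beyond noting that your write-up supplies exactly the routine details the authors chose to skip.
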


We may also forget the topological structures, thereby obtaining
functors $\lperv: \perv \to \dlat^{op}$ and $\lfrith: \ffrm \to \dlat$
defined in the expected way.

In~\cite[Proposition~4.6]{borlido21}, we have proved that $\lfrith$ is
right adjoint to the functor $\idlf:\dlat\ra \ffrm$ acting on objects
as $S\mapsto (\idl(S),S)$ and assigning to each morphism $f:S\ra T$
its unique extension to a frame morphism $\idl(S)\ra \idl(T)$. The
component of the counit of this adjunction at a Frith frame $(L, S)$
is the morphism
\begin{equation}
  c_{(L, S)}: (\idl(S), S) \to (L, S), \qquad J \mapsto \bigvee
  J.\label{eq:5}
\end{equation}

We now discuss the point-set version of this result. To define the
right adjoint of~$\lperv$, we introduce some notation. Given a lattice
$D$, we denote by $\pf(D)$ the set of all prime filters of~$D$, and we
consider the map
\begin{equation}
  \Phi_D: D \to \cP(\pf(D)),\qquad a \mapsto \widetilde a:=  \{F \in \pf(D)
  \mid a \in F\}.\label{eq:4}
\end{equation}
We denote as $\widetilde D := \{\widetilde a \mid a \in D\}$.  It is
well-known that $\Phi_D$ is a lattice homomorphism and it is an
embedding if and only if $D$ is isomorphic to a sublattice of the
powerset~$\cP(X)$ for some set~$X$ (see
e.g. \cite[Chapter~10]{MR1058437}). If we further assume that the Prime
Ideal Theorem holds, then $\Phi_D$ is \emph{always} an embedding.

We define the functor $\pff:\dlat^{op}\ra \perv$ as the one mapping
each lattice $D$ to the Pervin space $(\pf(D),\widetilde D)$, and each
lattice homomorphism $f:D_1\ra D_2$ to the preimage map
$\pff(f):=f^{-1}:(\pf(D_2), \widetilde{D_2})\ra (\pf(D_1),
\widetilde{D_1})$. This is well-defined as preimages of prime filters
are prime filters, and $\pff(f)^{-1}(\widetilde a) = \widetilde {f(a)}$
for every $a \in D_1$.

\begin{lemma}\label{lvdashpf}
  We have an idempotent adjunction $\lperv:\perv\lra \dlat^{op}:\pff$
  with $\lperv\dashv \pff$.
\end{lemma}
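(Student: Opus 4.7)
The plan is to construct the unit of the adjunction explicitly, verify the universal property, and then deduce idempotency from a direct check on the components of $\eta$.

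First, I would define the unit $\eta: \mathrm{id}_\perv \to \pff \circ \lperv$ at an object $(X,\cS)$ by
\[
  \eta_{(X,\cS)}: (X,\cS) \to (\pf(\cS), \widetilde{\cS}), \qquad x \mapsto \{A \in \cS \mid x \in A\}.
\]
Since $\cS$ is a sublattice of $\cP(X)$, the set $\eta_{(X,\cS)}(x)$ is a proper prime filter of $\cS$, and since $\eta_{(X,\cS)}^{-1}(\widetilde A) = A \in \cS$ for every $A \in \cS$, the map is a Pervin morphism. Naturality is immediate from $\lperv(f) = f^{-1}$: for any $f:(X,\cS) \to (Y,\cT)$ one computes $\pff(\lperv(f))(\eta_{(X,\cS)}(x)) = \{T \in \cT \mid f(x) \in T\} = \eta_{(Y,\cT)}(f(x))$.

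Next, to establish the adjunction, I would verify the universal property of $\eta$. Given a Pervin morphism $f:(X,\cS) \to \pff(D) = (\pf(D),\widetilde D)$, define $g:D\to\cS$ in $\dlat$ by $g(a) := f^{-1}(\widetilde a)$. This lies in $\cS$ by Pervin continuity of $f$, and it is a lattice homomorphism since $\Phi_D$ is and $f^{-1}$ commutes with finite unions and intersections of subsets. A short unraveling yields $\pff(g)\circ\eta_{(X,\cS)} = f$, and uniqueness follows because any $g'$ satisfying the same factorization must satisfy $x \in g'(a)$ iff $a \in f(x)$, forcing $g'(a) = f^{-1}(\widetilde a)$.

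Finally, for idempotency I would invoke the standard criterion that it suffices to show that $\lperv(\eta_{(X,\cS)})$ is an isomorphism in $\dlat^{op}$ for every $(X,\cS)$. Unwinding definitions, this morphism corresponds in $\dlat$ to the map $\widetilde{\cS} \to \cS$, $\widetilde A \mapsto A$, which is precisely the inverse of $\Phi_\cS$. Since $\cS$ is by hypothesis a sublattice of $\cP(X)$, the characterization of $\Phi_D$ recalled immediately before the lemma ensures that $\Phi_\cS$ is a lattice embedding, hence (being always surjective onto $\widetilde\cS$) a lattice isomorphism. There is no real obstacle in the argument; the main source of care is book-keeping for the contravariance of $\pff$ and the direction-reversal when passing to $\dlat^{op}$.
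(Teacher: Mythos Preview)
Your proof is correct and essentially parallel to the paper's, with one cosmetic difference: the paper establishes the adjunction by exhibiting the \emph{counit} (the co-restriction of $\Phi_D: D \to \widetilde D$ as a universal morphism from $\lperv$ to $D$), whereas you work with the \emph{unit} $\eta_{(X,\cS)}$ (which the paper records separately as the neighborhood map~\eqref{eq:3}). For idempotency, the paper observes that the counit $\Phi_\cS$ is an isomorphism whenever $\cS$ is a sublattice of a powerset, while you check that $\lperv(\eta_{(X,\cS)})$ is an isomorphism; by the triangle identity these two maps are mutually inverse, so the verification is literally the same computation viewed from opposite ends.
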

\begin{proof}
  With a routine computation one can show that, given a lattice $D$,
  the co-restriction of $\Phi_D$ to a map $D \to \widetilde D$ is a
  universal morphism from $\lperv$ to $D$ and thus, the counit of the
  adjunction at~$D$. To conclude that the adjunction is idempotent, we
  only have to note that $\Phi_D$ is an isomorphism whenever $D$ is a
  sublattice of some powerset and that is the case for every lattice
  component of a Pervin space $(X, \cS)$.
\end{proof}
We further remark that the component of the unit of the adjunction
$\lperv \dashv \pff$ at a Pervin space $(X, \cS)$ is the neighborhood
morphism of Pervin spaces defined by
\begin{equation}
  \cN_{(X, \cS)}: (X, \cS) \to (\pf(\cS), \widetilde \cS), \qquad x
  \mapsto \{S \in \cS \mid x \in S\}.\label{eq:3}
\end{equation}

We finish this section by providing an alternative representation of
the Pervin space $\pff(D)$, based on the well-known correspondence
between prime filters of $D$ and points of $\idl(D)$.

\begin{lemma}\label{l:8}
  There is a one-to-one correspondence between prime filters of $D$
  and points of $\idl(D)$ and, under this identification, $\widetilde
  a$ corresponds to $\widehat a$, for every $a\in D$. In particular,
  $\pff(D)$ is isomorphic to $\ptf(\idl(D), D)$.
\end{lemma}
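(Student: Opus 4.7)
The plan is to set up the standard bijection between prime filters of $D$ and points of $\idl(D)$, and then to check that under this bijection the generating lattices $\widetilde D$ and $\widehat D$ of the two Pervin spaces are matched element-by-element.

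First, I would define the two assignments. Given $F\in \pf(D)$, set $p_F:\idl(D)\to \two$ by $p_F(J)=1$ iff $J\cap F\neq \emptyset$. Conversely, given a point $p:\idl(D)\to \two$, set $F_p:=\{a\in D\mid p({\downarrow}a)=1\}$, where ${\downarrow}a$ denotes the principal ideal generated by $a$. That $F_p$ is a prime filter is routine: the top element of $\idl(D)$ is ${\downarrow}1$, the equalities ${\downarrow}(a\we b)={\downarrow}a\cap {\downarrow}b$ and ${\downarrow}(a\ve b)={\downarrow}a\ve {\downarrow}b$ in $\idl(D)$ together with $p$ preserving finite meets and binary joins yield the filter axioms and primeness, and properness follows from $p(0_{\idl(D)})=0$ since $0_{\idl(D)}={\downarrow}0$. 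That $p_F$ is a frame homomorphism is the technical core; the delicate step is preservation of arbitrary joins, where I would use that any element of $\bigvee_i J_i$ in $\idl(D)$ is dominated by some finite join $a_1\ve\cdots\ve a_n$ with $a_k\in J_{i_k}$, and then invoke primeness of $F$ to conclude that $F$ meets $\bigvee_i J_i$ iff it meets some $J_i$.

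Next I would check that the two assignments are mutually inverse. The equality $F_{p_F}=F$ reduces to the observation that ${\downarrow}a\cap F\neq \emptyset$ iff $a\in F$, using upward closure of $F$. The equality $p_{F_p}=p$ follows from $J=\bigvee_{a\in J}{\downarrow}a$ in $\idl(D)$ together with $p$ preserving joins.

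Finally, under the resulting bijection $F\leftrightarrow p_F$, the set $\widetilde a=\{F\mid a\in F\}$ corresponds to $\widehat a=\{p\mid p({\downarrow}a)=1\}$, directly from the definition of $p_F$. Hence the bijection sends $\widetilde D$ bijectively onto $\widehat D$, which by the characterization of isomorphisms in $\perv$ recalled in Section~\ref{sec:perv-frith} gives $\pff(D)\cong \ptf(\idl(D),D)$. I do not expect any serious obstacle; the only nontrivial verification is preservation of arbitrary joins by $p_F$, which is precisely the point where primeness of $F$ enters.
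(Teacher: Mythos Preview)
Your proof is correct and is exactly the standard argument. The paper itself states this lemma without proof, treating the correspondence between prime filters of $D$ and points of $\idl(D)$ as well known; your write-up supplies the routine details the paper omits, and the only genuinely nontrivial step---preservation of arbitrary joins by $p_F$, using primeness of~$F$---is handled correctly.
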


\subsection{Open intersections and strongly exact meets}

For a Pervin space $(X,\cS)$ we say that an intersection of elements
in $\cS$ is \emph{open} if it is so in the topological space~$(X,
\Omega_\cS(X))$. We will denote by $\sei \cS$ the collection of all
elements of $\Omega_\cS(X)$ that are open intersections of elements of
$\cS$. It is easy to see that this collection is closed under open
intersections and so, $\sei \cS$ may be seen as the closure of $\cS$
under open intersections.

Let now $L$ be a frame and $P \subseteq L$. The meet $\bigwedge P$ is
\emph{strongly exact} if the corresponding intersection of open
sublocales (=open pointfree subspaces) is open, or equivalently, if
the congruence $\bigvee_{s \in P} \Delta_s$ is open
(cf. \cite[Section~4.5]{Ball14}).  Note that, if $\Delta_a =
\bigvee_{s \in P} \Delta_s$ for some $P \subseteq L$, then the meet
$\bigwedge P$ is, by definition, strongly exact, and we necessarily
have $a = \bigwedge P$.  Given a Frith frame $(L, S)$, we denote by
$\sem S$ the set of elements of $L$ that may be written as a strongly
exact meet of elements of~$S$. Again, $\sem S$ can be thought of as
the closure of $S$ under strongly exact meets.

Finally, we say that a Pervin space $(X, \cS)$ (respectively, Frith
frame $(L,S)$) is \emph{strongly exact} if the lattice $\cS$
(respectively, $S$) is closed under open intersections of~$\cS$
(respectively, strongly exact meets of $S$). We denote by $\pervse$
and by $\ffrmse$ the full subcategories of $\perv$ and of $\ffrm$
determined by the strongly exact objects.

The next result implies that the contravariant functor ${\bf \Omega}:
\perv \to \ffrm$ restricts and co-restricts to a functor $\pervse \to
\ffrmse$.
\begin{proposition}[{\cite[Proposition 5.3]{Ball14}}]\label{p:9}
  Let $(X, \tau)$ be a topological space and $\cU \subseteq \Omega(X)$
  be a family of open subsets. If $\bigwedge \cU$ is a strongly exact
  meet in the frame $\Omega(X)$, then $\bigcap \cU$ is an open subset
  of~$X$.
\end{proposition}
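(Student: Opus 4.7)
The plan is to pass through the spectrum of $\Omega(X)$ and exploit the fact that, for a spatial frame, an open sublocale is determined by its points. Write $V := \bigcap \cU$ (the set-theoretic intersection) and $W := \bigwedge \cU$ (the interior of $V$, computed as the frame meet in $\Omega(X)$). The inclusion $W \se V$ is immediate from $W \se U$ for every $U \in \cU$, so the whole content of the proposition is the reverse inclusion $V \se W$; once this is established, $V = W$ is open.

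By strong exactness, the hypothesis unpacks as follows: there exists $W' \in \Omega(X)$ such that $\bve_{U\in \cU} \Delta_U = \Delta_{W'}$ in $\cC(\Omega(X))$, and as noted in the excerpt this forces $W' = \bwe \cU = W$. Now fix $x \in V$ and consider the associated point $p_x:\Omega(X)\to \two$, $p_x(Y)=1$ iff $x\in Y$. For each $U\in \cU$, the assumption $x\in U$ gives $p_x(U)=1$, and a direct check shows that $p_x$ identifies any two opens $A,B$ with $A\cap U=B\cap U$: indeed, applying $p_x$ one obtains $p_x(A)\we p_x(U)=p_x(B)\we p_x(U)$, and $p_x(U)=1$ yields $p_x(A)=p_x(B)$. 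Hence $\Delta_U\se \ker(p_x)$ for every $U\in \cU$.

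Since $\ker(p_x)$ is itself a frame congruence, the defining property of the join in $\cC(\Omega(X))$ gives $\bve_{U\in \cU}\Delta_U \se \ker(p_x)$, and by strong exactness this reads $\Delta_W\se \ker(p_x)$. Applying this to the pair $(W,1)\in \Delta_W$ yields $p_x(W)=p_x(1)=1$, i.e.\ $x\in W$. This proves $V\se W$ and completes the argument.

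I do not anticipate any real obstacle: the proof is essentially the translation of ``strongly exact meet = open congruence'' into the spatial setting, using only the universal property of $\bve$ in the congruence frame and the tautology that points of $\Omega(X)$ are evaluations at points of $X$. The only mild subtlety is the first step, namely identifying the element $W'$ produced by strong exactness with the frame meet $\bwe\cU$; this is precisely the observation made right after the definition of strongly exact meet in the excerpt, so it requires no extra work.
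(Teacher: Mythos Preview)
Your argument is correct. The key steps---that $\ker(p_x)$ is a frame congruence containing each $\Delta_U$ whenever $x\in U$, hence containing their join $\Delta_W$, and that $(W,1)\in\Delta_W$---are all sound, and together they yield $x\in W$ as claimed. The opening remark about ``passing through the spectrum'' and spatiality is slightly more than you actually use: the proof only needs that each $x\in X$ gives a frame homomorphism $p_x:\Omega(X)\to\two$, which is automatic.

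As for comparison: the paper does not supply its own proof of this proposition. It is stated with a citation to \cite[Proposition~5.3]{Ball14} and used as a black box, so there is no in-paper argument to compare your approach against. Your proof is a self-contained and elementary justification that could stand in place of the bare citation.
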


We do not know whether the functor $\ptf: \ffrm \to \perv$ restricts
and co-restricts to a functor $\ffrmse \to \pervse$.

\section{The bitopological point of view}\label{sec:bitop}
\subsection{Strong exactness and zero-dimensionality}\label{sec:se}

A Pervin space $(X,\ca{S})$ defines the bitopological space
$(X,\,\Om_{\cS}(X),\,\Om_{\cS^c}(X))$, where $\cS^c$ denotes the
lattice $\{S^c \mid S \in \cS\}$ formed by the complements in~$X$ of
the elements of~$\cS$.\footnote{For the interested reader, this is the
  underlying bitopological space of the quasi-uniform space
  represented by $(X, \cS)$.} The \emph{Skula functor} $\psk:\perv\ra
\bitop{}$ is then defined by assigning
$(X,\,\Om_{\cS}(X),\,\Om_{\cS^c}(X))$ to the Pervin space
$(X,\ca{S})$, and mapping each function to itself.

In the other direction, we may define a functor $\clf_+: \bitop \to \perv$ by
assigning to each bitopological space $\cX = (X, \tau_+, \tau_-)$ the
Pervin space $(X, \,\cl_+(\cX))$ and keeping morphisms
unchanged.

It is easily seen that these are well-defined functors. Let us prove
that $\clf_+$ is left adjoint to $\psk$.

\begin{lemma}\label{biadj}
  The functor $\clf_+$ is left adjoint to $\psk: \perv \to \bitop$.
\end{lemma}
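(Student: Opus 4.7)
The plan is to prove the adjunction by a direct hom-set bijection argument, exploiting the fact that morphisms in both $\perv$ and $\bitop$ are just set functions satisfying certain preimage conditions. For any bispace $\cX=(X,\tau_+,\tau_-)$ and any Pervin space $(Y,\cS)$, I would show that a set function $f\colon X\to Y$ is a $\perv$-morphism $\clf_+(\cX)\to(Y,\cS)$ if and only if it is a $\bitop$-morphism $\cX\to\psk(Y,\cS)$. Since the bijection between hom-sets is then the identity on underlying set functions, naturality in both variables is automatic, so the adjunction will follow.

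For the central equivalence, unpack the definitions: $f$ is a Pervin morphism iff $f^{-1}(S)\in\cl_+(\cX)$ for every $S\in\cS$, i.e.\ $f^{-1}(S)\in\tau_+$ and $f^{-1}(S)^c=f^{-1}(S^c)\in\tau_-$; and $f$ is a bitopological morphism $\cX\to\psk(Y,\cS)$ iff $f^{-1}(\Om_\cS(Y))\se\tau_+$ and $f^{-1}(\Om_{\cS^c}(Y))\se\tau_-$. The forward implication uses that $\cS$ and $\cS^c$ are bounded sublattices of $\cP(Y)$ (so closed under finite intersections), whence $\Om_\cS(Y)$ consists exactly of arbitrary unions of elements of~$\cS$, and analogously for $\Om_{\cS^c}(Y)$; then the claim follows because $f^{-1}$ preserves unions and $\tau_+,\tau_-$ are closed under them. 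The backward implication is even simpler: $\cS\se\Om_\cS(Y)$ and $\cS^c\se\Om_{\cS^c}(Y)$, so the continuity assumptions directly give $f^{-1}(S)\in\tau_+$ and $f^{-1}(S^c)\in\tau_-$ for all $S\in\cS$.

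Once this equivalence is established, I would explicitly record the unit and counit as identity maps on underlying sets in order to make the adjunction transparent. For $\cX=(X,\tau_+,\tau_-)$, the unit $\eta_\cX\colon\cX\to\psk(\clf_+(\cX))$ is the identity on $X$; it is continuous because $\cl_+(\cX)\se\tau_+$ and $\cl_+(\cX)^c\se\tau_-$ generate sub-topologies of $\tau_+$ and $\tau_-$, respectively. For $(Y,\cS)$, the counit $\varepsilon_{(Y,\cS)}\colon\clf_+(\psk(Y,\cS))\to(Y,\cS)$ is the identity on $Y$, which is a Pervin morphism because every $S\in\cS$ belongs to $\cl_+(\psk(Y,\cS))$ (as $S\in\Om_\cS(Y)$ and $S^c\in\Om_{\cS^c}(Y)$). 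The triangle identities are then trivially satisfied since all the structural maps are identities on underlying sets.

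The main obstacle is essentially bookkeeping: one must be careful that $\Om_\cS(Y)$ and $\Om_{\cS^c}(Y)$ really are just the closures of $\cS$ and $\cS^c$ under arbitrary unions (which depends on $\cS$ being a bounded \emph{sublattice}, hence $\cS^c$ being one too), so that preimage-continuity on the generators propagates to the whole topology. Beyond this observation, the argument is entirely formal.
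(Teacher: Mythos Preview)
Your proof is correct and follows essentially the same approach as the paper: both arguments rest on the observation that a set function $f\colon X\to Y$ is a Pervin morphism $\clf_+(\cX)\to(Y,\cS)$ if and only if it is a bitopological morphism $\cX\to\psk(Y,\cS)$, with the correspondence being the identity on underlying functions. The only cosmetic difference is packaging: the paper verifies the universal property of the unit $\eta_\cX$ directly, while you phrase it as a natural hom-set bijection and then record the unit and counit afterwards.
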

\begin{proof}
  Let $\cX = (X, \tau_+, \tau_-)$ be a bitopological space. We first
  observe that
  \[\psk \circ \clf_+(\cX) = (X,
    \,\Omega_{\cl_+(\cX)}(X),\, \Omega_{\cl_-(\cX)}(X)).\]
  Since the inclusions $\cl_+(\cX) \subseteq \tau_+$ and $\cl_-(\cX)
  \subseteq \tau_-$ hold, the identity function on~$X$ induces a
  morphism of bitopological spaces $\eta_{\cX}: \cX \to \psk \circ
  \clf_+(\cX)$. Let us show that $(\clf_+(\cX),\, \eta_{\cX})$ is a
  universal morphism from $\cX$ to $\psk$.  Let $(Y, \cT)$ be a Pervin
  space and $f: \cX \to \psk(Y, \cT)$ be a morphism of bitopological
  spaces. Since $f^{-1}(\cT) \subseteq \tau_+$ and $f^{-1}(\cT^c)
  \subseteq \tau_-$, the underlying set function of $f$ defines a
  morphism $g: \clf_+(\cX) \to (Y, \cT)$. Clearly, $g$ is the unique
  morphism satisfying $\psk(g) \circ \eta_{\cX} = f$, and this proves
  our claim.
\end{proof}

We now describe the equivalence of categories determined by $\clf_+
\dashv \psk$.

\begin{proposition}\label{biadjp}
  The fixpoints of the adjunction $\clf_+: \bitop\lra\perv:\psk$ are,
  respectively, the zero-dimensional bispaces and the strongly exact
  Pervin spaces.
\end{proposition}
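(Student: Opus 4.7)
The plan is to compute the unit and counit of the adjunction explicitly and identify when each component is an isomorphism.

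From the construction in the proof of Lemma~\ref{biadj}, the component of the unit at $\cX = (X, \tau_+, \tau_-)$ is the identity function $\eta_{\cX}: \cX \to (X, \Omega_{\cl_+(\cX)}(X), \Omega_{\cl_-(\cX)}(X))$, viewed as a bispace morphism. It is an isomorphism if and only if $\tau_+ = \Omega_{\cl_+(\cX)}(X)$ and $\tau_- = \Omega_{\cl_-(\cX)}(X)$, which says exactly that every positive open is a union of positive clopens and every negative open is a union of negative clopens. That is, $\cX$ is zero-dimensional.

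By general adjunction theory, the component of the counit at $(X, \cS)$ is likewise the identity function $\epsilon_{(X, \cS)}: \clf_+ \circ \psk(X, \cS) \to (X, \cS)$ viewed as a Pervin morphism; it is an isomorphism if and only if $\cS = \cl_+(\psk(X, \cS))$. The key step is to identify this set of positive clopens with $\sei\cS$. A subset $U \se X$ is a positive clopen of $\psk(X, \cS) = (X, \Omega_\cS(X), \Omega_{\cS^c}(X))$ iff $U \in \Omega_\cS(X)$ and $U^c \in \Omega_{\cS^c}(X)$. Since $\cS$ is closed under finite unions, $\cS^c$ is closed under finite intersections, and so every element of $\Omega_{\cS^c}(X)$ is of the form $\bcu_i S_i^c = (\bca_i S_i)^c$ with $S_i \in \cS$. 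Hence $U^c \in \Omega_{\cS^c}(X)$ iff $U$ is an intersection of elements of~$\cS$, and combining with $U \in \Omega_\cS(X)$, the positive clopens are precisely the open intersections of elements of~$\cS$. Thus $\cl_+(\psk(X, \cS)) = \sei \cS$, and the counit is an isomorphism iff $\cS = \sei\cS$, which is the definition of strong exactness.

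The only nontrivial computation is the identification $\cl_+(\psk(X, \cS)) = \sei \cS$, which rests entirely on the observation that finite unions in~$\cS$ become finite intersections in~$\cS^c$. Once this is in place, both halves of the statement reduce to a direct unfolding of the definitions of zero-dimensional bispace and strongly exact Pervin space.
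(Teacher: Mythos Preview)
Your argument is correct and follows the same approach as the paper: compute the unit and counit as identity-on-points maps and read off the fixpoint conditions. The only difference is that you spell out the identification $\cl_+(\psk(X,\cS)) = \sei\cS$ in detail, whereas the paper simply asserts it as ``easy to see''.
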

\begin{proof}
  It follows from the proof of Lemma~\ref{biadj} that the unit of the
  adjunction $\clf_+ \dashv \psk$ at a bispace $\cX = (X, \,\tau_+,\,
  \tau_-)$ is the morphism
  \[\eta_{\cX}: (X, \, \tau_+, \,\tau_-) \to (X,
    \,\Omega_{\cl_+(\cX)}(X),\, \Omega_{\cl_-(\cX)}(X))\]
  defined by the identity map on~$X$.  Thus, $\cX$ is a fixpoint of
  the adjunction if and only if $\tau_+ = \Omega_{\cl_+(\cX)}(X)$ and
  $\tau_- = \Omega_{\cl_-(\cX)}(X)$, that is, if and only if $\cX$ is
  zero-dimensional.

  Let us now exhibit the counit of $\clf_+ \dashv \psk$. It is easy to
  see that the positive clopens of $\psk(X, \cS) =
  (X,\,\Omega_\cS(X),\, \Omega_{\cS^c}(X))$ are the open intersections
  of~$\cS$.  Since $\cS \subseteq \sei \cS$, the identity on~$X$
  defines a morphism
  \[\varepsilon_{(X, \cS)}:(X, \sei {\cS}) \to (X, \cS)\]
  of Pervin spaces, which can be shown to be the counit of the
  adjunction. In particular, we have that $(X, \cS)$ is a fixpoint if
  and only if $\cS = \sei \cS$, that is, if and only if $(X, \cS)$ is
  a strongly exact Pervin space.
\end{proof}
\begin{corollary}\label{pointsetworks}
  The categories $\bitop_{\rm Z}$ and $\perv_{se}$ of zero-dimensional
  bitopological spaces and of strongly exact Pervin spaces are
  equivalent.
\end{corollary}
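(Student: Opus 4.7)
The plan is to invoke the general fact that any adjunction between categories restricts to an equivalence between the full subcategories of its fixpoints (those objects at which the unit, respectively counit, is an isomorphism). Concretely, if $F \dashv G$ with unit $\eta$ and counit $\varepsilon$, then the full subcategories $\mathrm{Fix}(\eta) \subseteq \mathcal{C}$ and $\mathrm{Fix}(\varepsilon) \subseteq \mathcal{D}$ are equivalent via the restrictions of $F$ and $G$. This is because $F\eta$ and $\varepsilon F$ are inverse to each other (by the triangle identities), so $F$ sends fixpoints of $\eta$ to fixpoints of $\varepsilon$, and dually for $G$; combined with the fact that units and counits of the restricted adjunction are isomorphisms on fixpoints by construction, one obtains the equivalence.

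First, I would apply Lemma~\ref{biadj} to get the adjunction $\clf_+ \dashv \psk$ between $\bitop$ and $\perv$. Then, by Proposition~\ref{biadjp}, the fixpoints of this adjunction on the bitopological side are exactly the zero-dimensional bispaces, and on the Pervin side are exactly the strongly exact Pervin spaces. These are by definition the categories $\bitop_{\rm Z}$ and $\pervse$. Applying the general principle above to this specific adjunction therefore yields the desired equivalence $\bitop_{\rm Z} \simeq \pervse$, implemented by the (co)restrictions of $\clf_+$ and $\psk$.

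There is no substantial obstacle here: the content of the corollary is entirely carried by Proposition~\ref{biadjp}, and the corollary is a formal consequence via the standard fixpoint-restriction argument for adjunctions. The only thing worth mentioning explicitly in the writeup is that one should verify that $\clf_+$ and $\psk$ do restrict to functors between $\bitop_{\rm Z}$ and $\pervse$, but this is automatic: if $\cX$ is any bispace then $\clf_+(\cX)$ is a fixpoint of the counit (as shown in the proof of Proposition~\ref{biadjp}, since the positive clopens of $\psk \circ \clf_+(\cX)$ coincide with the open intersections of $\cl_+(\cX)$, hence with $\cl_+(\cX)$ itself), and dually $\psk(X,\cS)$ is always a fixpoint of the unit because its positive and negative topologies are already generated by positive/negative clopens.
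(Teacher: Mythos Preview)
Your proposal is correct and matches the paper's approach: the corollary is stated there without proof, as an immediate consequence of Proposition~\ref{biadjp} via the standard restriction of an adjunction to its fixpoints, exactly as you outline. Your final paragraph goes slightly beyond what is needed---the triangle identities already guarantee that $\clf_+$ and $\psk$ send fixpoints to fixpoints, so there is no need to argue separately that $\clf_+(\cX)$ is strongly exact for \emph{every} bispace $\cX$; that stronger claim is the idempotence of the adjunction, which the paper establishes just after the corollary (and your terse justification ``hence with $\cl_+(\cX)$ itself'' is correct but relies implicitly on the observation that an open intersection of positive clopens lies in $\tau_+$ and has complement in $\tau_-$).
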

We finally remark that, since, for every Pervin space $(X, \cS)$, we
have
\begin{align*}
  \psk \circ \clf_+ \circ \psk(X, \cS)
  & = \psk(X, \sei\cS)
    = (X,
    \Omega_{\sei \cS}(X), \Omega_{(\sei \cS)^c}(X))
  \\ & = \psk(X, \cS),
\end{align*}
the unit $\eta_{\psk(X, \cS)}$ is always an isomorphism, and thus, the
adjunction $\clf_+ \dashv \psk$ is idempotent.
 
Let us now look at the pointfree version of the Skula functor and its
left adjoint.
For a Frith frame $(L,S)$ we set $\fsk(L,S)=(\cC_SL, \nabla L, \Delta
S)$.\footnote{As for spaces, this is the underlying biframe of the
  quasi-uniform frame defined by~$(L, S)$.} In order to define $\fsk$
on morphisms, we first observe that, by Proposition~\ref{p:10}, every
morphism of Frith frames $h:(L,S)\ra (M,T)$ uniquely extends to a
frame homomorphism $\overline h: \cC_SL \to \cC_TM$ satisfying
\[\overline h[\nabla L] = \nabla h[L] \qquad \text{and}
  \qquad\overline{h}[\Delta S] = \Delta h[S].\]
Therefore, $\overline{h}$ defines a biframe homomorphism $\overline h:
\fsk(L, S) \to \fsk(M, T)$ and we may set $\fsk(h) = \overline h$.

In the other direction, we define $\bbf_+: \bifrm \to \ffrm$ as follows. For a
biframe $\cL = (L, L_+, L_-)$, we set $\bbf_+(\cL) = (\langle
\bb_+(\cL)\rangle_\Frm,\, \bb_+(\cL))$, where $\langle
\bb_+(\cL)\rangle_\Frm$ denotes the subframe of $L_+$ generated by the
lattice $\bb_+(\cL)$ of positive bicomplemented elements of~$\cL$. In
order to define $\bbf_+$ on morphisms, notice that, if $h: \cL \to
\cK$ is a biframe homomorphism then, since positive bicomplemented
elements of $\cL$ are mapped to positive bicomplemented elements of
$\cK$, the suitable restriction and co-restriction of~$h$ induces a
morphism of Frith frames $\bbf_+h: \bbf_+(\cL) \to \bbf_+(\cK) $.

Next we will see that, as for Pervin spaces, the functors $\fsk$ and
$\bbf_+$ define an adjunction between the categories of Frith frames
and of biframes. However, while the fixpoints of $\ffrm$ are still
easy to describe, the same does not happen with those of $\bifrm$. We
leave it as an open problem to describe the categorical equivalence
underlying this adjunction.

Before proceeding, we prove the following technical result:
\begin{lemma}\label{l:1}
  Let $(L, S)$ be a Frith frame and $a \in L$. Then, $\nabla_a$ is a
  positive bicomplemented element of $(\cC_SL, \nabla L, \Delta S)$ if
  and only if $a$ is a strongly exact meet of elements of~$S$.
\end{lemma}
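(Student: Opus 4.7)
The plan is to reduce the statement to an explicit description of the subframe $\Delta S$ of $\cC_S L$. Since $\nabla_a$ and $\Delta_a$ are always complementary in $\cC L$, and $\cC_S L$ is a subframe of $\cC L$, uniqueness of complements forces any complement of $\nabla_a$ computed inside $\cC_S L$ to coincide with $\Delta_a$. Consequently, $\nabla_a$ is positive bicomplemented in $(\cC_S L, \nabla L, \Delta S)$ if and only if $\Delta_a \in \Delta S$, and the task reduces to showing that $\Delta_a \in \Delta S$ precisely when $a$ is a strongly exact meet of elements of $S$.

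The central step is then to establish the explicit description
\[
  \Delta S \;=\; \Bigl\{\bigvee_{s \in P} \Delta_s \;\Big|\; P \subseteq S\Bigr\}.
\]
The inclusion from right to left is immediate. For the converse, I would use that $\Delta$ sends arbitrary joins in $L$ to arbitrary meets in $\cC L$, so in particular $\Delta_s \wedge \Delta_t = \Delta_{s \vee t}$. Since $S$ is a bounded sublattice (hence closed under finite joins), the family $\{\Delta_s : s \in S\}$ is closed under finite meets. Invoking frame distributivity to push a finite meet past an arbitrary join, one then checks that the collection of joins of the form $\bigvee_{s \in P} \Delta_s$ is itself closed under finite meets. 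Combined with the evident closure under arbitrary joins (by reindexing), this exhibits the right-hand side as a subframe of $\cC_S L$ containing the generating set $\{\Delta_s : s \in S\}$, and hence containing $\Delta S$.

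With this description in hand, both implications fall out quickly. If $a$ is a strongly exact meet $\bigwedge P$ for some $P \subseteq S$, then by the remark following the definition of strongly exact meets one has $\Delta_a = \bigvee_{s \in P}\Delta_s$, which lies in $\Delta S$, so $\nabla_a$ is positive bicomplemented with complement $\Delta_a$. Conversely, if $\nabla_a$ is positive bicomplemented, then $\Delta_a \in \Delta S$, so there exists $P \subseteq S$ with $\Delta_a = \bigvee_{s \in P}\Delta_s$; the same remark then gives that $\bigwedge P$ is strongly exact and equal to $a$.

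The main obstacle I expect is the explicit description of $\Delta S$. One must be careful that ``subframe generated by'' requires closure under both finite meets and arbitrary joins, and verify that the interaction of $\Delta$ with finite joins in $S$ really suffices to collapse iterated meets-of-joins into a single layer of joins indexed by a subset of $S$ — this is what makes the behaviour of $\Delta_s$'s so much nicer than that of arbitrary $\Delta$'s, and what ties the negative component of the biframe directly to strongly exact meets.
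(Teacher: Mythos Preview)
Your proposal is correct and follows essentially the same line as the paper's proof: reduce to $\Delta_a \in \Delta S$ via uniqueness of complements in $\cC L$, then identify membership in $\Delta S$ with the existence of $P \subseteq S$ satisfying $\Delta_a = \bigvee_{s \in P}\Delta_s$. The only difference is that the paper treats the description $\Delta S = \{\bigvee_{s\in P}\Delta_s \mid P\subseteq S\}$ as evident, whereas you spell out the argument that this set is a subframe (using $\Delta_s \wedge \Delta_t = \Delta_{s\vee t}$ and closure of $S$ under finite joins); this extra detail is sound and does not change the strategy.
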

\begin{proof}
  Since $\cC_SL$ is a subframe of $\cC L$, we have that $\nabla_a$ is
  bicomplemented if and only if $\Delta_a \in \Delta S$, that is, if
  and only if there is some $P \subseteq S$ such that $\Delta_a =
  \bigvee_{s \in P} \Delta_s$. But this is the same as saying that
  $\bigwedge P$ is a strongly exact meet and $a = \bigwedge P$.
\end{proof}

We are now able to prove that $\fsk$ is indeed the left adjoint
of~$\bbf_+$.
\begin{lemma}\label{l:3}
  The functor $\fsk$ is left adjoint to $\bbf_+: \bifrm \to \ffrm$.
\end{lemma}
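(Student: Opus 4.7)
The plan is to establish the adjunction by producing an explicit unit and verifying its universal property via Proposition~\ref{p:10}. Throughout, I will work with a fixed Frith frame $(L, S)$ and biframe $\cK = (K, K_+, K_-)$.

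The first step is to give a concrete description of $\bbf_+ \circ \fsk(L, S)$. By Lemma~\ref{l:1}, the positive bicomplemented elements of $\fsk(L,S) = (\cC_S L, \nabla L, \Delta S)$ are precisely $\{\nabla_a \mid a \in \sem S\}$. Since $S \subseteq \sem S$ and $S$ join-generates $L$, and since $\nabla$ preserves arbitrary joins, the set $\{\nabla_a \mid a \in \sem S\}$ join-generates $\nabla L$. It follows that $\langle \bb_+(\fsk(L,S))\rangle_\Frm = \nabla L$, and hence
\[
  \bbf_+ \circ \fsk(L, S) \;=\; (\nabla L,\,\{\nabla_a \mid a \in \sem S\}).
\]
The candidate unit $\eta_{(L,S)}: (L, S) \to \bbf_+ \circ \fsk(L, S)$ is then the corestriction of the frame embedding $\nabla: L \to \cC_S L$; this is a Frith frame morphism because $\nabla[S] = \{\nabla_s \mid s \in S\} \subseteq \{\nabla_a \mid a \in \sem S\}$.

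Next, I would verify the universal property. Given a Frith frame morphism $h: (L, S) \to \bbf_+(\cK)$, each element $h(s)$ with $s \in S$ is bicomplemented in $\cK$, hence complemented in~$K$. Proposition~\ref{p:10} therefore yields a unique frame homomorphism $\tilde h: \cC_S L \to K$ satisfying $\tilde h \circ \nabla = h$. To see that $\tilde h$ is a biframe homomorphism $\fsk(L,S) \to \cK$, note first that $\tilde h[\nabla L] = h[L] \subseteq \langle \bb_+(\cK)\rangle_\Frm \subseteq K_+$. For the negative component, since every frame homomorphism preserves existing complements and $\Delta_s$ is the complement of $\nabla_s$ in $\cC_S L$, $\tilde h(\Delta_s)$ must be the complement of $h(s)$ in $K$; but this complement lies in $K_-$ because $h(s) \in \bb_+(\cK)$. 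Thus $\tilde h[\Delta S] \subseteq K_-$, as $\Delta S$ is generated as a subframe by $\{\Delta_s \mid s \in S\}$ and $K_-$ is a subframe of~$K$. This gives $\bbf_+(\tilde h) \circ \eta_{(L,S)} = h$.

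Uniqueness of $\tilde h$ and naturality of $\eta$ then follow directly: uniqueness is immediate from Proposition~\ref{p:10}, and naturality is a routine diagram chase using the uniqueness clause. There is no serious obstacle in this proof; the only mildly delicate point is the preliminary identification of $\bbf_+ \circ \fsk(L,S)$, which is already essentially packaged in Lemma~\ref{l:1}, together with the observation that a frame homomorphism automatically sends complemented pairs to complemented pairs, ensuring $\tilde h$ respects the negative component.
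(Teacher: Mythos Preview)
Your proof is correct and follows essentially the same route as the paper: both compute $\bbf_+\circ\fsk(L,S)$ via Lemma~\ref{l:1}, define the unit as the isomorphism $\nabla:L\to\nabla L$, and obtain the required biframe map from Proposition~\ref{p:10}. You simply spell out in more detail what the paper leaves as ``easily seen'' (namely that $\tilde h$ respects the positive and negative components), and your remark on naturality is harmless though unnecessary, since naturality of the unit is automatic once the universal property is established.
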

\begin{proof}
  It follows from Lemma~\ref{l:1} that $\bbf_+\circ \fsk(L, S) =
  (\nabla L, \{\nabla_a \mid a \in \sem S\})$ and thus, the
  isomorphism $\nabla: L \to \nabla L$ induces an embedding of Frith
  frames
  \[\eta_{(L, S)}: (L, S) \to \bbf_+\circ \fsk(L, S).\]
  To conclude that $\fsk \dashv \bbf_+$, it suffices to show that
  $\eta_{(L,S)}$ is universal from $(L, S)$ to $\bbf_+$, that is, that
  for every biframe $\cK$ and every morphism $h: (L, S) \to
  \bbf_+(\cK)$, there exists a unique $h': \fsk(L, S) \to \cK$ such
  that $\bbf_+(h') \circ \eta_{(L, S)} = h$. But the underlying frame
  homomorphism of such an $h'$ has to be an extension $h': \cC_SL \to
  K$ of~$h$. Since $h[S]$ consists of complemented elements of~$K$, by
  Proposition~\ref{p:10}, there exists exactly one such morphism, which is
  easily seen to define a biframe homomorphism $h': \fsk(L, S) \to
  \cK$.
\end{proof}

\begin{corollary}
  \label{c:2}
  The fixpoints of $\ffrm$ for the adjunction $\fsk \dashv \bbf_+$ are
  the strongly exact Frith frames.
\end{corollary}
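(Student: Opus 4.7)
The plan is to read off the fixpoints directly from the explicit description of the unit of $\fsk \dashv \bbf_+$ that was computed in the proof of Lemma~\ref{l:3}, combined with the isomorphism criterion for Frith morphisms (Proposition~\ref{p:4}).

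Recall from the proof of Lemma~\ref{l:3} that, for a Frith frame $(L,S)$, we have
\[
  \bbf_+ \circ \fsk(L,S) = (\nabla L,\ \{\nabla_a \mid a \in \sem S\}),
\]
and the unit $\eta_{(L,S)} : (L,S) \to \bbf_+ \circ \fsk(L,S)$ is induced by the canonical frame isomorphism $\nabla : L \to \nabla L$. In particular, the underlying frame homomorphism of $\eta_{(L,S)}$ is already an isomorphism; whether $\eta_{(L,S)}$ is an isomorphism of Frith frames therefore depends only on the lattice component.

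Invoking Proposition~\ref{p:4}, $\eta_{(L,S)}$ is an isomorphism of Frith frames if and only if its underlying frame map is injective and it satisfies $\eta_{(L,S)}[S] = \{\nabla_a \mid a \in \sem S\}$. The first condition is automatic, and since $\nabla$ is injective, the second amounts to the equality $S = \sem S$, i.e. to $(L,S)$ being strongly exact. This gives one direction and, by the same token, if $(L,S)$ is strongly exact then $\eta_{(L,S)}$ is a Frith isomorphism, which gives the converse. No step looks delicate; the only thing to be careful about is that we are using the Frith-morphism criterion (which requires $h[S] = T$ on the nose) rather than just asking the frame map to be an isomorphism, and this is precisely why strong exactness of $S$ is forced.
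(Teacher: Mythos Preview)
Your proof is correct and follows the same approach as the paper: both read off the unit $\eta_{(L,S)}:(L,S)\to(\nabla L,\{\nabla_a\mid a\in\sem S\})$ from the proof of Lemma~\ref{l:3} and observe that it is a Frith isomorphism precisely when $S=\sem S$. You simply make explicit the appeal to Proposition~\ref{p:4} where the paper says ``clearly''.
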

\begin{proof}
  It follows from the proof of Lemma~\ref{l:3} that the unit of the
  adjunction $\fsk \dashv \bbf_+$ is
  \[\eta_{(L, S)}: (L, S) \to (\nabla L, \{\nabla_a \mid a \in \sem
    S\}), \quad  a \mapsto \nabla_a.\]
  Clearly, this is an isomorphism if and only if $(L, S)$ is strongly
  exact.
\end{proof}

Let $\cL = (L, L_+, L_-)$ be a biframe and $\bbf_+(\cL) = (M,
T)$. Since the elements of $T$ are complemented in $L$, by
Proposition~\ref{p:10}, the frame embedding $M \hookrightarrow L$ may
be uniquely extended to a frame homomorphism $\cC_TM \to L$. It is
easily seen that this map induces a biframe homomorphism
$\varepsilon_\cL: (\cC_TM, \nabla M, \Delta T) \to (L, L_+, L_-)$. We
leave it for the reader to verify that $\varepsilon_\cL$ is the
component at~$\cL$ of the counit of the adjunction $\fsk \dashv
\bbf_+$. In particular, if $(L, S)$ is a Frith frame then, since
$\{\Delta_s \mid s \in S\}$ and $\{\Delta_a \mid a \in \sem S\}$
generate the same subframe of $\cC L$, by Lemma~\ref{l:1}, we have
that $\fsk \circ \bbf_+ \circ \fsk(L, S) = \fsk(L, S)$ and
$\varepsilon_{\fsk(L,S)}$ is the identity map.  Therefore, the
adjunction $\fsk \dashv \bbf_+$ is idempotent and, as such, it induces
an equivalence between the images of the two involved functors.

While it is clear that every biframe of the form $\fsk(L, S)$ is
zero-dimensional, it is not the case that every zero-dimensional
biframe is of that form.

\begin{example}\label{sec:1}\footnote{This example was borrowed from~\cite[Example
    5.13]{borlido21}. The interested reader may show that the
    fixpoints of the adjunction $\fsk \dashv \bbf_+$ are precisely the
    underlying biframes of the quasi-uniform frames representable by a
    Frith frame in the sense of~\cite[Proposition~5.6]{borlido21}.}
  Let $X$ be a topological space such that the congruence frame of its
  frame of opens is not spatial (see~\cite[Theorem~3.4]{niefield87}
  for a characterization of the frames whose congruence frame is not
  spatial). We let $\cL = (L, L_+, L_-)$ be the \emph{Skula biframe
    of~$X$}, that is: $L_+$ is the frame of opens of~$X$, $L_-$ is the
  subframe of $\cP(X)$ generated by the complements of the elements of
  $L_+$, and $L$ is the subframe of $\cP(X)$ generated by $L_+ \cup
  L_-$. To show that $\cL$ is not a fixpoint of the adjunction $\fsk
  \dashv \bbf_+$, we first recall that the underlying frame
  homomorphism of the counit of $\fsk \dashv \bbf_+$ at a biframe $\cL
  = (L, L_+, L_-)$ is the unique frame extension $\varepsilon_{\cL}:
  \cC_T M \to L$ of the embedding $M \hookrightarrow L$, where $(M, T)
  = \bbf_+(\cL)$. Since, in this case, we have $\bbf_+(\cL) = (L_+,
  L_+)$, $\cL$ is a fixpoint if and only if the unique frame
  homomorphism $\cC L_+ \to L$ extending $L_+ \hookrightarrow L$ is an
  isomorphism. But that is not the case because $L$ is spatial and
  $\cC L_+$ is not.
  A concrete example is given by taking for $X$ the real line $\mathbb
  R$ equipped with the Euclidean topology $\Omega(\mathbb R)$. Since
  the Booleanization of $\Omega(\mathbb R)$ is a pointless nontrivial
  sublocale, by the characterization of~\cite{niefield87}, its
  congruence frame is not spatial.
\end{example}

Also, the fixpoints of $\fsk \dashv \bbf_+$ need not be compact as
$\fsk(L, S)$ is not compact if neither is $L$. We may however show
that every compact and zero-dimensional biframe is a fixpoint.

\begin{proposition}
  \label{p:8}
  Let $\cL = (L, L_+, L_-)$ be a biframe and $\varepsilon_\cL:
  (\cC_TM, \nabla M, \Delta T) \to (L, L_+, L_-)$ be the component
  at~$\cL$ of the counit of the adjunction $\fsk \dashv \bbf_+$, where
  $(M, T) = \bbf_+(\cL)$. Then,
  \begin{enumerate}
  \item $\varepsilon_\cL$ is dense,
  \item\label{item:13} if $\cL$ is zero-dimensional, then
    $\varepsilon_\cL[\nabla M] = L_+$ and $\varepsilon_\cL[\Delta T] =
    L_-$.
  \end{enumerate}
  In particular, if $\cL$ is compact and zero-dimensional, then
  $\varepsilon_\cL$ is an isomorphism and thus, $\cL$ is a fixpoint of
  the adjunction $\fsk\dashv \bbf_+$.
\end{proposition}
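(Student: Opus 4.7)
The plan is to prove (a) and (b) directly from the description of $\cC_T M$ as a frame generated by certain rectangles, and then deduce the ``in particular'' statement using compactness in a nontrivial way.

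For (a), I will exploit that every element of $\cC_T M$ is a join of the form $\bigvee_i (\nabla_{a_i} \wedge \Delta_{t_i})$ with $a_i \in M$ and $t_i \in T$. This decomposition follows from the fact that $\cC_T M$ is the subframe of $\cC M$ generated by $\nabla M \cup \Delta T$, together with the observations that $\nabla$ is a frame embedding and that $\Delta$ turns finite meets into finite joins. By Lemma~\ref{l:1}, $\nabla_t$ is bicomplemented in $\fsk(M,T)$ with complement $\Delta_t$ for every $t \in T$, so $\varepsilon_\cL(\Delta_t)$ must be the biframe complement of $t$ in $\cL$, which lies in $L_-$. Writing $t'$ for this complement, we have $\varepsilon_\cL(\nabla_a \wedge \Delta_t) = a \wedge t'$, and the hypothesis $\varepsilon_\cL(c) = 0$ then forces $a_i \wedge t_i' = 0$ in $L$, equivalently $a_i \leq t_i$ in $L$ (and hence in the subframe $M$). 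Consequently $\nabla_{a_i} \wedge \Delta_{t_i} \leq \nabla_{t_i} \wedge \Delta_{t_i} = 0$, so $c = 0$.

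For (b), the restriction of $\varepsilon_\cL$ to $\nabla M$ is (via the isomorphism $\nabla$) the inclusion $M \hookrightarrow L$, so $\varepsilon_\cL[\nabla M] = M$; zero-dimensionality of $\cL$ yields $L_+ \subseteq M$, while the reverse inclusion is automatic, giving $M = L_+$. Similarly, $\varepsilon_\cL[\Delta T]$ is the subframe of $L$ generated by $\{t' : t \in T\} = \bb_-(\cL)$, which equals $L_-$ by zero-dimensionality combined with the observation that $\bb_-(\cL)$ is already a sublattice (closed under finite meets and joins).

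For the ``in particular,'' surjectivity of $\varepsilon_\cL$ is immediate from (b), since $L$ is generated as a subframe by $L_+ \cup L_-$. The real work is injectivity, and here compactness must play a substantive role: Example~\ref{sec:1} shows that density and surjectivity do not suffice in the zero-dimensional setting alone. My approach is to identify both $\cC_T M$ and $L$ with the ideal completion of the same distributive lattice. Compactness and zero-dimensionality of $\cL$ force every compact element of $L_+$ to be a finite join of elements of $T$ (and hence an element of $T$), so $M = L_+ \cong \idl(T)$; a parallel argument gives $L \cong \idl(D)$, where $D$ is the distributive lattice generated by $T \cup \bb_-(\cL)$ subject to the biframe complementation relations. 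The frame $\cC_T M$ is obtained from $\idl(T)$ by freely adjoining complements for the elements of $T$, which yields the same $\idl(D)$, and under these identifications $\varepsilon_\cL$ corresponds to the identity on $D$. The main obstacle is making this coherence-based identification rigorous, in particular verifying that compact elements in $\cC_T M$ are exactly the Boolean extension of $T$.
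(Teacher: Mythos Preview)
Your arguments for (a) and (b) are correct and essentially match the paper's proof (the paper is terser, checking density only on a single rectangle $\nabla_a\wedge\Delta_t$, but the reduction to this case is exactly your join-of-rectangles decomposition).

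For the ``in particular'' clause, your route diverges from the paper's and, as written, has a genuine gap. The paper does not attempt any coherence identification; it simply observes that $\cC_TM$ is a zero-dimensional frame (it is generated by the complemented elements $\nabla_t,\Delta_t$ with $t\in T$, since $M=\langle T\rangle_\Frm$) and then invokes the standard fact that a dense frame homomorphism with zero-dimensional (hence regular) domain and compact codomain is injective (see~\cite[VII.2.2.2]{picadopultr2011frames}). Together with the surjectivity you get from (b), this finishes the proof in one line.

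Your approach, by contrast, aims to show $\cC_TM\cong\idl(D)\cong L$ for a single lattice $D$. The assertion that freely complementing $T$ in $\idl(T)$ ``yields the same $\idl(D)$'' is precisely where the argument becomes circular: $D$ is defined concretely inside $L$, and identifying it with the lattice $\overline T$ of compact elements of $\cC_TM$ amounts to showing that $\varepsilon_\cL$ restricts to an isomorphism $\overline T\to D$ --- which is the injectivity you are trying to prove. Your plan can be salvaged, but only by reusing density: since $\overline T$ is a Boolean algebra, the restriction $\varepsilon_\cL|_{\overline T}$ has trivial kernel (by part (a)) and is therefore injective; combined with $\cC_TM\cong\idl(\overline T)$ (which follows from completeness of $(M,T)$ via Theorem~\ref{cfrith}) and $L\cong\idl(D)$, this does give the isomorphism. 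So the coherence detour works, but it is longer than the paper's argument and still ultimately rests on density --- the very ingredient that the standard regular-to-compact lemma packages more efficiently.
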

\begin{proof}
  Let $a \in M$ and $t \in T$ be such that $\varepsilon_\cL(\nabla_a
  \wedge \Delta_t) = 0$. By definition of~$\varepsilon_\cL$, this is
  the same as having that the equality $a \wedge t^* = 0$ holds in
  $L$. Since $t$ is complemented in $L$, this is equivalent to $a \leq
  t$ which, in turn, implies $\nabla_a \wedge \Delta_t = 0$. This
  proves that $\varepsilon_\cL$ is dense.

  Now, again by definition of $\varepsilon_\cL$, we have that
  $\varepsilon_\cL[\nabla M] \supseteq T$ and $\varepsilon_\cL[\Delta
  T] \supseteq T^*$. Since $T$ is the lattice of bicomplemented
  elements of $L_+$, if $\cL$ is zero-dimensional, this implies that
  $\varepsilon_\cL[\nabla M] = L_+$ and $\varepsilon_\cL[\Delta T] =
  L_-$. Thus, \ref{item:13} holds.

  Finally, recall that $\varepsilon_\cL$ is an isomorphism of biframes
  provided its underlying frame homomorphism is injective and
  satisfies $\varepsilon_\cL[\nabla M] = L_+$ and
  $\varepsilon_\cL[\Delta T] = L_-$. Thus, it suffices to show that if
  $\cL$ is compact and zero-dimensional then $\varepsilon_\cL$ is
  injective. But it is well-known that dense frame homomorphisms with
  zero-dimensional domain (which is the case of $\cC_TM$) and compact
  codomain are injective (see e.g.~\cite[Chapter~VII,
  Proposition~2.2.2]{picadopultr2011frames}).\footnote{The result
    cited is stated for a \emph{regular} domain, but every
    zero-dimensional frame is \emph{regular}.}
\end{proof}

The following is as close as we will get to a pointfree version of the
result stated in Corollary~\ref{pointsetworks}.

\begin{corollary}
  Strongly exact Frith frames are a full coreflective subcategory of
  the category of zero-dimensional biframes.
\end{corollary}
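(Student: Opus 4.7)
The plan is to realize this corollary as a restriction of the adjunction $\fsk \dashv \bbf_+$ of Lemma~\ref{l:3}, exploiting its idempotence. Two ingredients are already in hand: by Corollary~\ref{c:2}, the unit $\eta_{(L,S)}$ is an isomorphism precisely when $(L,S)$ is strongly exact; and, by the discussion following Corollary~\ref{c:2}, the adjunction $\fsk \dashv \bbf_+$ is idempotent.

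First I would verify that $\fsk$ always takes values in zero-dimensional biframes. Given $(L,S) \in \ffrm$, in $\fsk(L,S) = (\cC_S L, \nabla L, \Delta S)$ the positive component $\nabla L$ is join-generated by $\{\nabla_s \mid s \in S\}$, since $S$ join-generates $L$ and $\nabla$ preserves arbitrary joins; each such $\nabla_s$ is positive bicomplemented by Lemma~\ref{l:1}. Dually, $\Delta S$ is join-generated by the $\Delta_s$ with $s \in S$, each complemented in $\cC_SL$ with complement $\nabla_s \in \nabla L$. Hence $\fsk$ corestricts to a functor $\ffrmse \to \bifrm_{\rm Z}$.

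Next I would argue that $\bbf_+$ always lands in $\ffrmse$. This is a formal consequence of idempotence: the triangle identity gives $\bbf_+(\varepsilon_\cL)\circ \eta_{\bbf_+(\cL)} = \mathrm{id}$, and idempotence forces $\bbf_+(\varepsilon_\cL)$, hence also $\eta_{\bbf_+(\cL)}$, to be an isomorphism; then Corollary~\ref{c:2} places $\bbf_+(\cL)$ in $\ffrmse$. Consequently the adjunction $\fsk \dashv \bbf_+$ descends to an adjunction $\fsk : \ffrmse \lra \bifrm_{\rm Z} : \bbf_+$.

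To finish, I would observe that the unit of this restricted adjunction at any object of $\ffrmse$ coincides with the original unit, and is therefore an isomorphism. Thus $\fsk : \ffrmse \to \bifrm_{\rm Z}$ is a fully faithful left adjoint, which is exactly the assertion that $\ffrmse$ is (equivalent to) a full coreflective subcategory of $\bifrm_{\rm Z}$, with coreflector $\bbf_+$. I do not foresee any genuine obstacle; the only care required is in the bookkeeping of the various restrictions, each of which reduces immediately to Lemma~\ref{l:1}, Lemma~\ref{l:3}, Corollary~\ref{c:2}, and the idempotence noted thereafter.
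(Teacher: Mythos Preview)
Your proposal is correct and follows essentially the same approach the paper intends: the corollary is stated without proof precisely because it is meant to be read off from the idempotent adjunction $\fsk \dashv \bbf_+$ (Lemma~\ref{l:3}), the identification of the $\ffrm$-fixpoints as the strongly exact Frith frames (Corollary~\ref{c:2}), and the observation that $\fsk$ always lands in zero-dimensional biframes. Your only addition is to spell out explicitly, via idempotence and the triangle identity, that $\bbf_+$ takes values in $\ffrmse$; this is exactly the missing bookkeeping and is correct as written.
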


We finish this section by relating the point-set and pointfree versions
of the functors we have considered.
\begin{proposition}\label{p:14}
  The following squares commute up to natural isomorphism.
  \begin{center}
    \begin{tikzpicture}
      \node (A) {$\bitop$}; \node[right of = A, xshift = 20mm] (B)
      {$\perv$}; \node[below of = A] (C) {$\bifrm$}; \node[below of =
      B] (D) {$\ffrm$};
      \draw (A) to node[ArrowNode,above] {$\clf_+$} (B); \draw (B) to
      node[ArrowNode,right] {$\bf \Omega$} (D); \draw (A) to
      node[ArrowNode,left] {${\bf \Omega}_b$} (C); \draw (C) to
      node[ArrowNode, below] {$\bbf_+$} (D);
      \node[right of = A, xshift = 50mm] (A') {$\perv$}; \node[right
      of = A', xshift = 20mm] (B') {$\bitop$}; \node[below of = A']
      (C') {$\ffrm$}; \node[below of = B'] (D') {$\bifrm$};
      \draw (A') to node[ArrowNode,above] {$\psk$} (B'); \draw (D') to
      node[ArrowNode,right] {$\ptf_b$} (B'); \draw (C') to
      node[ArrowNode,left] {$\ptf$} (A'); \draw (C') to
      node[ArrowNode, below] {$\fsk$} (D');
    \end{tikzpicture}
  \end{center}
\end{proposition}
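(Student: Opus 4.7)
The plan is to verify each square directly, object by object, by unwinding the definitions; naturality in both cases will be essentially automatic, since every functor in sight acts on morphisms either by pre/post-composition with a fixed canonical map or by taking preimages. So the real content is in matching the functorial values on objects.

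For the left square, fix a bispace $\cX = (X, \tau_+, \tau_-)$. Then ${\bf\Omega}_b(\cX) = (\tau_+ \vee \tau_-, \tau_+, \tau_-)$, and I would first note that an element $U \in \tau_+$ is positive bicomplemented in this biframe exactly when $X \setminus U \in \tau_-$, i.e.\ exactly when $U \in \cl_+(\cX)$. Hence $\bb_+({\bf \Omega}_b(\cX)) = \cl_+(\cX)$, and so
\[\bbf_+\circ {\bf \Omega}_b(\cX) = (\langle \cl_+(\cX)\rangle_\Frm,\, \cl_+(\cX)).\]
Since $\cl_+(\cX)$ is already a bounded sublattice of $\cP(X)$, the subframe it generates inside $\tau_+ \vee \tau_-$ is the collection of arbitrary joins of its elements, which is precisely $\Omega_{\cl_+(\cX)}(X)$. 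This agrees on the nose with ${\bf \Omega}\circ \clf_+(\cX) = (\Omega_{\cl_+(\cX)}(X),\, \cl_+(\cX))$, giving a natural isomorphism (the identity, under the above identifications). Naturality with respect to a bimorphism $f: \cX \to \cY$ is then immediate because both functors send $f$ to the preimage map $f^{-1}$.

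For the right square, fix a Frith frame $(L, S)$. Then $\fsk(L,S) = (\cC_SL, \nabla L, \Delta S)$, and applying $\ptf_b$ gives $(\pt(\cC_SL), \widehat{\nabla L}, \widehat{\Delta S})$. By Proposition~\ref{p:10}, since every frame homomorphism $L \to \two$ automatically sends $S$ into the (fully complemented) frame $\two$, restriction along $\nabla: L \hookrightarrow \cC_SL$ yields a canonical bijection $\alpha: \pt(\cC_SL) \to \pt(L)$, $p \mapsto p \circ \nabla$. Under $\alpha$, the basic open $\widehat{\nabla_a}$ (for $a \in L$) corresponds to $\widehat a \subseteq \pt(L)$, and since $S$ is join-dense in $L$, every $\widehat a$ is a union of opens $\widehat s$ with $s \in S$; thus $\alpha$ identifies $\widehat{\nabla L}$ with $\Omega_{\widehat S}(\pt(L))$. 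Dually, for $s \in S$, $\Delta_s$ is the complement of $\nabla_s$ in $\cC_SL$, so $\widehat{\Delta_s}$ corresponds under $\alpha$ to $(\widehat s)^c$, and therefore $\widehat{\Delta S}$ corresponds to $\Omega_{(\widehat S)^c}(\pt(L))$. Combining these yields
\[\ptf_b\circ \fsk(L,S) \;\cong\; (\pt(L),\, \Omega_{\widehat S}(\pt(L)),\, \Omega_{(\widehat S)^c}(\pt(L))) = \psk\circ \ptf(L,S).\]
Naturality in $(L, S)$ follows once one observes that on a Frith-morphism $h:(L,S)\to(M,T)$ both $\ptf_b\circ \fsk$ and $\psk\circ \ptf$ act as ``pre-compose a point with $h$''.

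The main obstacle is purely bookkeeping: verifying that $\alpha$ is not just a bijection of underlying sets but actually identifies the two positive (respectively, negative) topologies. Once the universal property of $\cC_SL$ is invoked to produce $\alpha$ and the join-density of $S$ in $L$ is used to collapse $\widehat{\nabla L}$ and $\Omega_{\widehat S}(\pt(L))$, everything else is formal.
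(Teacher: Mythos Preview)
Your proof is correct and follows essentially the same approach as the paper: a direct computation for the left square, and for the right square the bijection $\pt(L)\leftrightarrow\pt(\cC_SL)$ coming from the universal property of $\cC_SL$ (Proposition~\ref{p:10}), followed by matching the two topologies. The only minor point is that your naturality remark for the right square is slightly imprecise---$\ptf_b\circ\fsk$ acts by precomposition with $\overline h$, not with $h$---but the required check is exactly $\overline h\circ\nabla_L=\nabla_M\circ h$, which is the defining property of $\overline h$, so the argument goes through.
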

\begin{proof}
  Commutativity of the left-hand side diagram follows easily from
  computing the functors $\bbf_+ \circ {\bf \Omega}_b$ and
  ${\bf\Omega} \circ \clf_+$.
  To show that the right-hand side diagram commutes up to natural
  isomorphism, we define a natural isomorphism $\beta: \psk \circ \ptf
  \implies \ptf_b \circ \fsk$ as follows.  For a Frith frame~$(L, S)$,
  we define
  \[\beta_{(L,S)}:\psk \circ \ptf(L,S) \ra \ptf_b\circ \fsk(L,S),\qquad
    p \mapsto \widetilde{p}\]
  where $\widetilde{p}$ is the unique morphism making the following
  diagram commute (cf. Proposition~\ref{p:10}).
  \begin{center}
    \begin{tikzpicture}
      [->, node distance = 20mm] \node (A) {$L$}; \node[right of = A]
      (B) {$\cC_S L$}; \node[below of = B, yshift = 5mm] (C) {$\two$};
      \draw[right hook->] (A) to node[above, ArrowNode] {$\nabla$}
      (B); \draw[dashed] (B) to node[right, ArrowNode, yshift = 1mm]
      {$\widetilde p$} (C); \draw (A) to node[left, ArrowNode, xshift
      = 0mm, yshift = -1mm] {$p$}(C);
    \end{tikzpicture}
  \end{center}
  By uniqueness of each $\widetilde p$, this correspondence
  establishes a bijection between the points of~$L$ and the points
  of~$\cC_SL$. Let us show that this is a homeomorphism with respect
  to the first topology of $\psk \circ \ptf(L,S)$. We first note that
  the positive open subsets of $\psk \circ \ptf(L,S)$ are the subsets
  of the form $\widehat a$, while those of $\ptf_b\circ \fsk(L,S)$ are
  the subsets of the form $\widehat{\nabla_a}$, where $a \in L$. Now,
  given $a \in L$ and $p \in \pt(L)$, using commutativity of the
  triangle above, we have
  \[p \in \beta_{(L, S)}^{-1}(\widehat{\nabla_a}) \iff \widetilde
    p(\nabla_a) = 1 \iff p(a) = 1 \iff p \in \widehat a.\]
  Since $\beta_{(L, S)}$ is a bijection, this implies that $\beta_{(L,
    S)}$ is both continuous and open with respect to the first
  topologies, thus a homeomorphism. Showing that $\beta_{(L, S)}$ is
  also a homeomorphism with respect to the second topology is
  analogous, and we leave it for the reader.

  Now, $\beta$ is a natural transformation provided the following
  square commutes for every morphism $h: (M,T) \to (L, S)$ of Frith
  frames.
  \begin{center}
    \begin{tikzpicture}
      \node (A) {$\psk \circ \ptf(L, S)$}; \node[right of = A, xshift
      = 40mm] (B) {$\bpt\circ \fsk(L,S)$}; \node[below of = A] (C)
      {$\psk \circ \ptf(M,T)$}; \node[below of = B] (D) {$\bpt \circ
        \fsk(M,T)$};
      \draw (A) to node[ArrowNode,above] {$\beta_{(L,S)}$} (B); \draw (B) to
      node[ArrowNode,right] {$(-)\circ \fsk(h)$} (D); \draw (A) to
      node[ArrowNode,left] {$(-)\circ h$} (C); \draw (C) to node[ArrowNode,
      below] {$\beta_{(M,T)}$} (D);
    \end{tikzpicture}
  \end{center}
  That is indeed the case because, for every $p\in \pt(L)$ and $x \in
  M$, we have the following equalities:
  \[\widetilde{p}\circ \fsk(h)(\nabla_{x})
    =\widetilde{p}(\nabla_{h(x)})= p\circ h(x) =\widetilde{p\circ
      h}(\nabla_{x}).\popQED\]
\end{proof}

Proposition~\ref{p:14} makes it natural to ask whether the
corresponding diagrams for the spectrum and open-set functors also
commute. The answer is negative, as shown by the next example.

\begin{example}\label{sec:2}
  For the first diagram, consider the biframe $\cL = ({\bf 3}, {\bf
    3}, \two)$, where ${\bf 3}$ denotes the $3$-element chain. Then,
  $\ptf\circ \bbf_+ (\cL)$ is a space with one point, while
  $\clf_+\circ \bpt (\cL)$ has two, so these cannot be isomorphic.
  For the second diagram, we let $X$ be a topological space as in
  Example~\ref{sec:1} and observe that ${\bf \Omega }_b\circ \psk(X,
  \Omega(X))$ is the \emph{Skula biframe} $\cL$ of $X$. As already
  argued, $\cL$ is not in the image of $\fsk$, thus, ${\bf \Omega}_b
  \circ \psk(X, \Omega(X))$ is not isomorphic to $\fsk \circ {\bf
    \Omega}(X, \Omega(X))$.
\end{example}

\subsection{The monotopological case}\label{sec:zero-dim}
In this section, we will investigate the monotopological version of
the results of Section~\ref{sec:se}. Under the identifications
$\Top \hookrightarrow \bitop$ and $\Frm \hookrightarrow \bifrm$, these
will follow as a consequence of the latter.

Let us consider the restrictions $\clf: \Top \to \perv$ and $\bbf: \Frm
\to \ffrm$ of the functors $\clf_+$ and $\bbf_+$ defined in
Section~\ref{sec:se}. Explicitly, $\clf$ maps a topological space
$(X, \tau)$ to the Pervin space $\clf_+(X, \tau, \tau) = (X,\, \cl(X,
\tau))$, where $\cl(X, \tau)$ denotes the Boolean algebra of clopen
subsets of $X$ for the topology $\tau$, and a morphism to itself. On
the other hand, given a frame $L$, $\bbf(L)$ is the pair $(\langle
\bb(L)\rangle_\Frm, \,\bb(L))$, where $\langle \bb(L)\rangle_\Frm$
denotes the subframe of $L$ generated by the lattice of complemented
elements~$\bb(L)$ of~$L$, and a frame homomorphism $h:L \to K$ is sent
to the morphism of Frith frames $\bbf h: \bbf(L) \to \bbf(K)$ induced by
the suitable restriction and co-restriction of~$h$.
Then, the adjunctions $\clf_+: \bitop\lra\perv:\psk$ and $\fsk: \ffrm
\lra \bifrm: \bbf_+$ studied in Section~\ref{sec:se} restrict,
respectively, to adjunctions $\clf: \Top\lra\perv':\uperv'$ and
$\ufrith': \ffrm' \lra \Frm: \bbf$, where
\begin{itemize}
\item $\perv'$ denotes the full subcategory of $\perv$ determined by
  the Pervin spaces $(X, \cS)$ such that $\psk(X, \cS)$ belongs to the
  image of $\Top \hookrightarrow \bitop$,
\item $\ffrm'$ denotes the full subcategory of $\ffrm$ determined by
  the Frith frames $(L, S)$ such that $\fsk(L, S)$ belongs to the
  image of $\Frm \hookrightarrow \bifrm$,
\item $\uperv'$ is the suitable restriction and co-restriction of
  $\psk$, and
\item $\ufrith'$ is the suitable restriction and co-restriction of
  $\fsk$.
\end{itemize}
The following result, whose proof is trivial, explains our choice of
notation for the functors $\uperv'$ and $\ufrith'$: these are nothing
but the suitable restrictions of the functors $\uperv$ and $\ufrith$
defined in Section~\ref{sec:forget}.

\begin{lemma}\label{l:15} The following statements hold:
  \begin{enumerate}
  \item\label{item:11} a Pervin space $(X, \cS)$ belongs to $\perv'$
    if and only if $\Omega_\cS(X) = \Omega_{\cS^c}(X)$,
  \item\label{item:12} a Frith frame $(L, S)$ belongs to $\ffrm'$ if
    and only if $\nabla L = \Delta S$.
  \end{enumerate}
  In particular, given $(X, \cS) \in \perv'$ and $(L, S) \in \ffrm'$,
  the following equalities hold:
  \[\uperv'(X, \cS) = (X,\, \Omega_\cS(X))\qquad \text{ and
    }\qquad\ufrith'(L, S) = L.\]
\end{lemma}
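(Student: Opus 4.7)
The plan is simply to unfold the definitions: the membership conditions in $\perv'$ and $\ffrm'$ are stated in terms of $\psk$ and $\fsk$ landing in the essential images of the embeddings $\Top \hookrightarrow \bitop$ and $\Frm \hookrightarrow \bifrm$, and these embeddings identify $(X,\tau)$ with the bispace $(X,\tau,\tau)$ and a frame $K$ with the biframe $(K,K,K)$. So in each case the condition becomes ``the two side-components of the bi-structure coincide''.

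For part \emph{(a)}, I would recall that $\psk(X,\cS) = (X,\,\Omega_\cS(X),\,\Omega_{\cS^c}(X))$. This bispace lies in the image of $\Top\hookrightarrow\bitop$ precisely when its positive and negative topologies are the same frame, i.e.\ $\Omega_\cS(X)=\Omega_{\cS^c}(X)$, yielding the stated equivalence. For part \emph{(b)}, I would use $\fsk(L,S) = (\cC_SL,\nabla L,\Delta S)$: this lies in the image of $\Frm\hookrightarrow\bifrm$ iff $\nabla L = \Delta S$ (both as subframes of $\cC_SL$); the forward direction is immediate, and conversely, if $\nabla L=\Delta S$, then since $\cC_SL$ is by definition generated as a frame by $\nabla L \cup \Delta S$, both side-components must also equal the whole of $\cC_SL$, so $\fsk(L,S)$ is indeed of the form $(M,M,M)$.

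For the ``in particular'' statement, once $(X,\cS)\in\perv'$, the preceding computation gives $\psk(X,\cS)=(X,\Omega_\cS(X),\Omega_\cS(X))$, which is identified with the topological space $(X,\Omega_\cS(X))$ under $\Top\hookrightarrow\bitop$; since $\uperv'$ is by definition this identified restriction of $\psk$, the equality $\uperv'(X,\cS)=(X,\Omega_\cS(X))$ follows. Analogously, when $(L,S)\in\ffrm'$ we have $\fsk(L,S)=(\cC_SL,\cC_SL,\cC_SL)$, and the isomorphism $\nabla\colon L \to \nabla L = \cC_SL$ identifies this with $L$ in $\Frm$, yielding $\ufrith'(L,S)=L$.

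There is no real obstacle here, as the result is a direct unwinding of definitions; the only minor subtlety is noticing that the equality $\nabla L = \Delta S$ in \emph{(b)} automatically forces both to coincide with $\cC_SL$, which is why the ``main component equals the two side components'' condition reduces to the single equality stated.
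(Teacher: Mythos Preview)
Your proposal is correct and is precisely the unfolding of definitions that the paper has in mind; indeed, the paper does not give a proof at all, merely noting that the result ``is trivial''. Your observation that $\nabla L=\Delta S$ forces both to equal $\cC_SL$ (since the latter is generated by their union and $\nabla L$ is already a subframe) is the one detail worth making explicit, and you handle it correctly, as you do the identification $\cC_SL\cong L$ via $\nabla$ needed for the final equality $\ufrith'(L,S)=L$.
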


We will now characterize the categorical equivalences induced by the
adjunctions $\clf \dashv \uperv'$ and $\ufrith' \dashv \bbf$. Recall
that we have seen in Section~\ref{sec:se} that both $\clf_+ \dashv
\psk$ and $\fsk \dashv \bbf_+$ are idempotent and, therefore, so are
their restrictions.

\begin{proposition}\label{p:2}
  The categories of zero-dimensional topological spaces and that of
  strongly exact symmetric Pervin spaces are equivalent.
\end{proposition}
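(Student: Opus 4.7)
The plan is to exploit the restricted adjunction $\clf \dashv \uperv'$ set up just before the statement. Since it is obtained by restricting the idempotent adjunction $\clf_+ \dashv \psk$ from Section~\ref{sec:se}, it is itself idempotent, and so induces an equivalence between its two subcategories of fixpoints (sitting in $\Top$ and $\perv'$, respectively). The proof thus reduces to identifying these fixpoints on each side.

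On the topological side, a space $(X,\tau)$ identified with the bispace $(X,\tau,\tau)$ satisfies $\cl_+(X,\tau,\tau) = \cl(X,\tau)$, so by Proposition~\ref{biadjp} it is a fixpoint of $\clf \dashv \uperv'$ precisely when $\tau$ is generated by its clopens, that is, precisely when $(X,\tau)$ is a zero-dimensional topological space.

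On the Pervin side I would show that the fixpoints in $\perv'$ are exactly the strongly exact symmetric Pervin spaces. Every symmetric $(X,\cS)$ automatically lies in $\perv'$, since $\cS = \cS^c$ forces $\Omega_{\cS}(X) = \Omega_{\cS^c}(X)$. Combining Lemma~\ref{l:15} with the counit description in Proposition~\ref{biadjp}, an object of $\perv'$ is a fixpoint if and only if $\cS$ coincides with the Boolean algebra $\cl(X,\,\Omega_\cS(X))$ of clopens of its generated topology. One direction is immediate: from $\cS = \cl(X,\,\Omega_\cS(X))$ both symmetry and strong exactness follow at once, since any open intersection of clopens is itself clopen.

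The only step requiring genuine work, and thus the main obstacle I anticipate, is the reverse direction. Given $(X,\cS)$ strongly exact and symmetric and an arbitrary clopen $U$ of $\tau := \Omega_\cS(X)$, I would expand $U^c$ as a union $\bigcup_k W_k$ of elements of $\cS$ (using that $\cS$ is a sublattice closed under finite intersections) and then rewrite $U = \bigcap_k W_k^c$; symmetry yields $W_k^c \in \cS$, and since this intersection equals the open set $U$, strong exactness finally places $U$ in $\cS$. Combining the two identifications with the equivalence guaranteed by idempotency delivers the desired conclusion.
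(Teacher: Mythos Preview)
Your proposal is correct and follows the same overall strategy as the paper: restrict the idempotent adjunction $\clf_+\dashv\psk$ to $\clf\dashv\uperv'$ and identify the fixpoints on each side. The topological side is handled identically.

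On the Pervin side, however, you take a more hands-on route than the paper. You characterize the fixpoints in $\perv'$ concretely as those $(X,\cS)$ with $\cS=\cl(X,\Omega_\cS(X))$, and then verify directly that this condition is equivalent to being strongly exact and symmetric---the nontrivial implication being your ``reverse direction'' argument writing a clopen as an open intersection of complements of basic sets. The paper instead argues more abstractly: since the adjunction is idempotent, any fixpoint lies in the image of $\clf$, and everything in the image of $\clf$ is symmetric (as $\clf$ outputs the Boolean algebra of clopens). This one-line categorical observation replaces your explicit computation entirely. Your approach has the virtue of being self-contained and making the fixpoint condition explicit; the paper's is shorter and illustrates a general principle about idempotent adjunctions that you could reuse elsewhere (indeed, the paper recycles it immediately in the proof of the next proposition on frames).
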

\begin{proof}
  Since $\clf \dashv \uperv'$ is a restriction of $\clf_+ \dashv
  \psk$, by Proposition~\ref{biadjp}, it induces an equivalence
  between the categories of topological spaces that are
  zero-dimensional when seen as bitopological spaces, and the category
  determined by the Pervin spaces $(X, \cS) \in \perv'$ that are
  strongly exact. The former are easily seen to be the
  zero-dimensional topological spaces. We argue that $(X, \cS) \in
  \perv'$ is strongly exact if and only if it is a strongly exact
  symmetric Pervin space. Clearly, $\perv'$ contains all symmetric
  Pervin spaces, thus the backwards implication is
  trivial. Conversely, if $(X, \cS) \in \perv'$ is strongly exact
  then, being a fixpoint of the idempotent adjunction $\clf \dashv
  \uperv'$, it belongs to the image of $\clf$. Hence, it is symmetric,
  as required.
\end{proof}

\begin{proposition}
  The categories of zero-dimensional frames and that of strongly exact
  symmetric Frith frames are equivalent.
\end{proposition}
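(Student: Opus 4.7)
The plan is to mirror the argument of Proposition~\ref{p:2}, using the adjunction $\ufrith' \dashv \bbf$ established in the preceding discussion. Since $\fsk \dashv \bbf_+$ is idempotent, so is its restriction $\ufrith' \dashv \bbf$, and the induced equivalence identifies the fixpoints on each side.

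First, I would describe the fixpoints on the $\ffrm'$-side. By Corollary~\ref{c:2}, the fixpoints of $\fsk \dashv \bbf_+$ in $\ffrm$ are exactly the strongly exact Frith frames, so the fixpoints of the restriction lying in $\ffrm'$ are precisely the strongly exact $(L, S)$ that additionally satisfy $\nabla L = \Delta S$, by Lemma~\ref{l:15}\ref{item:12}. The key step is to show that, under strong exactness, this condition is equivalent to being symmetric. The forward direction follows from idempotency: any such $(L, S)$ is isomorphic to $\bbf(\ufrith'(L, S)) = \bbf(L) = (\langle \bb(L)\rangle_\Frm,\, \bb(L))$, whose second component is a Boolean algebra. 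For the converse, if $S$ is Boolean then each $s \in S$ has its complement $s'$ in $S$, so $\Delta_s = \nabla_{s'}$ in $\cC L$, whence $\Delta S \subseteq \nabla S$ and, symmetrically, $\nabla S \subseteq \Delta S$. Combined with the identity $\nabla L = \nabla S$ -- which follows from the join-density of $S$ in $L$ and the fact that $\nabla$ preserves joins -- this yields $\nabla L = \Delta S$, as required.

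Second, I would identify the fixpoints on the $\Frm$-side as the zero-dimensional frames. By the observation just made, $\bbf(L) = (\langle \bb(L)\rangle_\Frm,\, \bb(L))$ is always a symmetric Frith frame and hence lies in $\ffrm'$, so Lemma~\ref{l:15} gives $\ufrith'(\bbf(L)) = \langle \bb(L)\rangle_\Frm$. The counit at $L$ is therefore the canonical frame inclusion $\langle \bb(L)\rangle_\Frm \hookrightarrow L$, which is an isomorphism exactly when $L$ is join-generated by its complemented elements, that is, when $L$ is zero-dimensional.

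The main subtle step I anticipate is deriving $\nabla L = \Delta S$ from the symmetry of $S$: one must combine the congruence identity $\Delta_s = \nabla_{s'}$ for complemented $s$ with the observation that $\nabla$, being a frame homomorphism, transports the join-density of $S$ in $L$ into $\nabla L = \nabla S$. The remaining verifications -- that $\bbf(L)$ lies in $\ffrm'$ for every frame $L$, and that the counit at $L$ reduces to the canonical inclusion -- are routine consequences of idempotency and of the definitions in Section~\ref{sec:zero-dim}.
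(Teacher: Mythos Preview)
Your proposal is correct and follows essentially the same approach as the paper's proof. You are simply more explicit where the paper is terse: the paper identifies the $\Frm$-fixpoints via the observation that $\nabla L\supseteq\Delta S$ holds iff $S$ consists of complemented elements (rather than computing the counit as you do), and for the $\ffrm'$-fixpoints it just appeals to ``an argument similar to that used in the proof of Proposition~\ref{p:2}'' rather than spelling out, as you do, why symmetric Frith frames lie in $\ffrm'$.
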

\begin{proof}
  Since the adjunction $\ufrith' \dashv \bbf$ is idempotent, its
  fixpoints in $\Frm$ are the frames of the form $\ufrith'(L, S) = L$,
  for $(L, S) \in \ffrm'$. Noticing that the inclusion $\nabla L
  \supseteq \Delta S$ holds if and only if $S$ consists of
  complemented elements, these are easily seen to be the
  zero-dimensional ones. On the other hand, an argument similar to
  that used in the proof of Proposition~\ref{p:2} shows that the
  fixpoints of $\ufrith' \dashv \bbf$ in $\ffrm'$ are strongly exact
  symmetric Frith frames.
\end{proof}


\section{Complete Pervin spaces and complete Frith
  frames}\label{sec:completion}
In this section we will show that the dual adjunction ${\bf \Omega}:
\perv \lra \ffrm : \ptf$ induces a duality between $T_0$ complete
Pervin spaces on the one hand and complete Frith frames on the
other. We use the following definition
from~\cite{GehrkeGrigorieffPin2010, pin17}.

\begin{definition}[{\cite{GehrkeGrigorieffPin2010,pin17}}]\label{sec:ccomplete}
  Let $(X, \cS)$ be a Pervin space. A filter $F \subseteq \cP(X)$ is a
  \emph{Cauchy filter} if it is proper and, for every $S\in \ca{S}$,
  either $S$ or its complement is in $F$. We say that a Cauchy filter
  $F$ \emph{converges} to the point $x \in X$ if every open
  neighborhood $U \in \Omega_{\overline{\cS}}(X)$ of~$x$ belongs to
  $F$. Finally, a Pervin space $(X,\ca{S})$ is said to be \emph{Cauchy
    complete} if every Cauchy filter converges, and a \emph{Cauchy
    completion} of $(X, \cS)$ is a dense extremal monomorphism $c: (X,
  \cS) \hookrightarrow (Y, \cT)$ into a Cauchy complete Pervin space
  $(Y, \cT)$.\footnote{In~\cite{GehrkeGrigorieffPin2010} \emph{Cauchy
      complete} and \emph{Cauchy completion} are simply named
    \emph{complete} and \emph{completion}, respectively.}
\end{definition}
In the following, we refer to the symmetrization of a Pervin space, defined in subsection 2.5. The following is an easy observation that we state for later
reference.
\begin{lemma}\label{l:7}
  Let $(X, \cS)$ be a Pervin space, and $F \subseteq \cP(X)$ be a
  Cauchy filter. Then, $F$ converges to $x$ if and only if $x$ belongs
  to $\bigcap (F \cap \overline{\cS})$.
\end{lemma}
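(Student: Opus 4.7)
The plan is to prove the two implications directly from the definitions. For the forward implication, suppose that $F$ converges to $x$ and pick any $T \in F \cap \overline{\cS}$. If we had $x \notin T$, then $x \in X \setminus T$, and since $\overline{\cS}$ is a Boolean algebra, $X \setminus T \in \overline{\cS} \subseteq \Omega_{\overline{\cS}}(X)$, so $X \setminus T$ would be an open neighborhood of $x$. Convergence would then force $X \setminus T \in F$; combined with $T \in F$ this would yield $\emptyset \in F$, contradicting properness. Hence $x \in T$, and the forward direction is established.

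For the backward implication, suppose $x \in \bigcap(F \cap \overline{\cS})$ and take an arbitrary $U \in \Omega_{\overline{\cS}}(X)$ containing $x$. Because $\overline{\cS}$ is a bounded sublattice closed under finite intersections that generates the topology $\Omega_{\overline{\cS}}(X)$, one can pick $T \in \overline{\cS}$ with $x \in T \subseteq U$; it then suffices to show $T \in F$, for then $U \in F$ by upward closure. The core step, which I expect to be the main (minor) obstacle, is to argue that $F \cap \overline{\cS}$ is an ultrafilter of the Boolean algebra $\overline{\cS}$, i.e., that for every $T \in \overline{\cS}$ at least one of $T$ or $X \setminus T$ lies in $F$.

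I would derive this last claim from the disjunctive normal form of $T$: every element of $\overline{\cS}$ can be written as $\bigcup_{j=1}^n \bigcap_{i \in I_j} S_{ij}^{\epsilon_{ij}}$, with each $S_{ij} \in \cS$ and each $\epsilon_{ij} \in \{+,-\}$, where $S^+ := S$ and $S^- := X \setminus S$. The Cauchy hypothesis yields $S^+ \in F$ or $S^- \in F$ for every generator $S \in \cS$, and a short case split then concludes the argument: if for some $j$ all the $S_{ij}^{\epsilon_{ij}}$ belong to $F$, then $\bigcap_i S_{ij}^{\epsilon_{ij}} \in F$ by finite intersection, hence $T \in F$ by upward closure; otherwise, for each $j$ we can choose $i_j$ with $S_{i_j j}^{-\epsilon_{i_j j}} \in F$, and the resulting finite intersection $\bigcap_j S_{i_j j}^{-\epsilon_{i_j j}}$ lies in $F$ and is contained in $\bigcap_j \bigcup_i S_{ij}^{-\epsilon_{ij}} = X \setminus T$, forcing $X \setminus T \in F$. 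Applying this dichotomy to our chosen $T$, the hypothesis $x \in T$ rules out $X \setminus T \in F$, so $T \in F$ and the proof is complete.
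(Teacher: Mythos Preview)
Your proof is correct. The paper itself does not give a proof of this lemma, calling it ``an easy observation that we state for later reference,'' so there is nothing to compare against. Your argument is a clean unpacking of the definitions; the only place where you do more work than strictly necessary is the disjunctive-normal-form case split, which could be summarized as the standard fact that a proper filter of a Boolean algebra that decides every generator (in the sense of containing it or its complement) is an ultrafilter---but spelling it out as you do is perfectly fine.
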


Note that a filter is Cauchy with respect to $(X, \cS)$ if and only if
it is Cauchy with respect to $(X, \overline{\cS})$. Therefore, a
Pervin space is Cauchy complete if and only if so is its
symmetrization.  As observed in~\cite{GehrkeGrigorieffPin2010,pin17},
one may show that this notion of Cauchy complete Pervin space
correctly captures the notion of a complete quasi-uniform space.
It is known that complete quasi-uniform spaces may be equivalently
characterized via dense extremal monomorphisms.  In the case of Pervin
spaces, the suitable definitions are the following.

\begin{definition}\label{sec:complete}
  A symmetric Pervin space $(X, \cB)$ is \emph{complete} if every
  dense extremal monomorphism $(X, \cB) \hookrightarrow (Y, \cC)$,
  with $(Y, \cC)$ a $T_0$ symmetric Pervin space is an isomorphism.
  More generally, we say that a Pervin space $(X, \cS)$ is
  \emph{complete} if so is its symmetrization.
\end{definition}

Our next goal is to show that Definitions~\ref{sec:ccomplete}
and~\ref{sec:complete} are equivalent.  Before we move on, we need to
prove a couple of technical lemmas.

\begin{lemma}\label{tech1}
  Let $m:(X,\cS)\hookrightarrow (Y,\cT)$ be a dense extremal
  monomorphism. If $F \subseteq \cP(Y)$ is a Cauchy filter, then so is
  $m^{-1}(F)$.
\end{lemma}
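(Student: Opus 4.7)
My plan is to verify that $m^{-1}(F) := \{m^{-1}(A) \mid A \in F\}$ is a Cauchy filter of $(X,\cS)$. This requires checking three things: that $m^{-1}(F)$ is a filter of $\cP(X)$, that it is proper, and that it satisfies the Cauchy condition relative to~$\cS$.

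The Cauchy condition follows directly from extremality. Given $S \in \cS$, there exists some $T \in \cT$ with $S = m^{-1}(T)$. Since $F$ is Cauchy, either $T \in F$ or $T^c \in F$, and since $m^{-1}$ commutes with complementation, either $S = m^{-1}(T) \in m^{-1}(F)$ or $S^c = m^{-1}(T^c) \in m^{-1}(F)$. The filter axioms are likewise routine: $m^{-1}$ preserves finite intersections, the top $X = m^{-1}(Y)$ lies in $m^{-1}(F)$, and upward closure follows from injectivity of $m$. Specifically, given $B \subseteq X$ with $B \supseteq m^{-1}(A)$ for some $A \in F$, set $A' := A \cup m[B]$; then $A' \in F$ as a superset of~$A$, and injectivity of $m$ gives $m^{-1}(A') = m^{-1}(A) \cup B = B$.

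The main obstacle is properness, namely showing that $m^{-1}(A) \neq \emptyset$ for every $A \in F$. This is precisely where the density of~$m$ must enter. Suppose for contradiction that $m^{-1}(A) = \emptyset$ for some $A \in F$; equivalently, $A$ is disjoint from $m[X]$. The Cauchy structure of $F$ should then be exploited to refine $A$ down to a suitable $T \in F \cap \cT$ (or a finite Boolean combination thereof), which by properness of~$F$ must be nonempty; density of~$m$ then forces its preimage to be nonempty as well, yielding a contradiction. I expect this step to be the subtlest, since the density hypothesis is phrased at the level of~$\cT$ whereas $A$ ranges over all of~$F$, and bridging the gap relies on the fact that a Cauchy filter is essentially determined, at the level relevant for density, by its trace on~$\cT$.
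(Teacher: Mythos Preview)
Your handling of the filter axioms and of the Cauchy condition is correct and matches the paper's argument. You are also right that properness is where density must enter, and that there is a genuine mismatch: density of~$m$ only guarantees $m^{-1}(U) \neq \emptyset$ for nonempty $U \in \Omega_\cT(Y)$, whereas an arbitrary $A \in F$ need not contain any such~$U$. The refinement you sketch cannot close this gap. If you refine to some $T \in F \cap \cT$ with $T \subseteq A$, such a~$T$ need not exist; if instead you allow $T \in F \cap \overline{\cT}$, such a~$T$ may exist but need not lie in $\Omega_\cT(Y)$, so density says nothing about~$m^{-1}(T)$. Concretely, take $Y = \{0,1\}$ with $\cT = \{\emptyset, \{0\}, Y\}$, $X = \{0\}$ with $\cS = \{\emptyset, X\}$, and $m$ the inclusion; then $m$ is a dense extremal monomorphism and $F = \{\{1\}, Y\}$ is a Cauchy filter of~$(Y,\cT)$, but $m^{-1}(\{1\}) = \emptyset$. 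Thus the lemma, read literally, is false, and no argument along your lines can succeed.

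The paper's own proof is equally terse at this step---it simply invokes density via Lemma~\ref{l:9} without addressing the mismatch---so the defect is in the formulation rather than in your reading of it. In the paper's only use of the lemma (the implication \ref{char1}$\Rightarrow$\ref{char2} in Theorem~\ref{char}) the filter is $F_y = {\uparrow}\{C \in \cC \mid y \in C\}$, which by construction has a basis in the target lattice~$\cC$; for such filters every $A \in F_y$ contains some nonempty $C \in \cC \subseteq \Omega_\cC(Y)$, and density then gives $m^{-1}(A) \supseteq m^{-1}(C) \neq \emptyset$. The clean fix is to add the hypothesis that $F$ has a basis in~$\cT$, after which your properness argument goes through directly.
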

\begin{proof}
  Let $F\se \cP(Y)$ be a Cauchy filter. Since $m$ is dense and $F$ is
  proper, $m^{-1}(F)$ is, by Lemma~\ref{l:9}, a proper filter
  too. Now, given $S\in \cS$, since $m$ is an extremal monomorphism,
  we have $S=m^{-1}(T)$ for some $T\in \cT$. Since $F$ is Cauchy, it
  contains either $T$ or $T^c$ and thus, $m^{-1}(F)$ contains either
  $S = m^{-1}(T)$ or $S^c = m^{-1}(T^c)$. This shows that $m^{-1}(F)$
  is Cauchy as well.
\end{proof}

Recall the neighborhood map $\cN_{(X, \cS)}:(X,\cS)\to (\pf(\cS),
\,\widetilde \cS)$ from~\eqref{eq:3}, that is, the unit of the
adjunction $\lperv \dashv \pff$.
\begin{lemma}\label{tech3}
  For a $T_0$ Pervin space $(X,\cS)$, the map $\cN_{(X, \cS)}$ is an
  extremal monomorphism of Pervin spaces whose symmetrization is
  dense.
\end{lemma}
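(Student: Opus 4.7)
The plan is to verify the two properties separately, using the explicit description of $\cN_{(X,\cS)}$ and basic properties of prime filters.

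First, I observe that for every $S \in \cS$ the computation
\[
  \cN_{(X,\cS)}^{-1}(\widetilde S) \;=\; \{x \in X \mid S \in \cN_{(X,\cS)}(x)\} \;=\; S
\]
holds. This immediately gives $\cN_{(X,\cS)}^{-1}[\widetilde\cS] = \cS$, so $\cN_{(X,\cS)}$ is a Pervin morphism and moreover every element of $\cS$ has the required preimage form. To apply Proposition~1.2(b) and conclude it is an extremal monomorphism, I only need injectivity of the underlying map. If $\cN_{(X,\cS)}(x) = \cN_{(X,\cS)}(y)$, then $x$ and $y$ lie in the same elements of $\cS$, hence in the same elements of the topology $\Omega_\cS(X)$ generated by $\cS$; the $T_0$ assumption then forces $x = y$.

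Next I turn to density of the symmetrization $\psym(\cN_{(X,\cS)}):(X,\ba\cS) \to (\pf(\cS), \ba{\widetilde\cS})$. By Lemma~\ref{l:9}, it suffices to show that any nonempty element of the topology $\Omega_{\ba{\widetilde\cS}}(\pf(\cS))$ has nonempty preimage. Since $\Phi_\cS$ is a lattice homomorphism and $\ba{\widetilde\cS}$ is the Boolean algebra generated by $\widetilde\cS$, a basic open of this topology has the form
\[
  V \;=\; \widetilde a \,\setminus\, (\widetilde{b_1} \cup \cdots \cup \widetilde{b_n})
\]
with $a, b_1, \dots, b_n \in \cS$. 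Its preimage under $\cN_{(X,\cS)}$ is exactly $a \setminus (b_1 \cup \cdots \cup b_n)$. I will argue contrapositively: suppose this preimage is empty, so $a \subseteq b_1 \cup \cdots \cup b_n$ in $X$. Because $\cS$ is closed under finite joins in $\cP(X)$, this inclusion is the lattice inequality $a \leq b_1 \vee \cdots \vee b_n$ in $\cS$. Any prime filter $F \in \widetilde a$ then contains $b_1 \vee \cdots \vee b_n$, hence some $b_i$ by primality, showing $V = \emptyset$.

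The main obstacle, which turns out to be mild, is making sure the symmetrized source and target topologies are correctly identified so that the basic opens of the target reduce to the form $\widetilde a \setminus \bigcup_i \widetilde{b_i}$; once this is in place the argument is a direct application of the prime filter property. No step seems delicate beyond bookkeeping.
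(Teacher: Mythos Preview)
Your proof is correct and follows essentially the same approach as the paper's. The only cosmetic difference is that the paper exploits the fact that $\widetilde{b_1}\cup\cdots\cup\widetilde{b_n}=\widetilde{b_1\vee\cdots\vee b_n}$ to reduce immediately to sets of the form $\widetilde{S_1}\cap\widetilde{S_2}^c$, so that only upward closure of filters (rather than primality) is needed; your slightly longer route via primality is equally valid.
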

\begin{proof} If $(X,\cS)$ is $T_0$, then different points have
  different neighborhood filters in $\cS$, and so $\cN_{(X,\cS)}$ is
  injective.  Since, for every $S \in \cS$, we have $\cN_{(X,
    \cS)}^{-1}(\widetilde S) = S$, the map $\cN_{(X, \cS)}$ is an
  extremal monomorphism.  To show that $\psym(\cN_{(X, \cS)})$ is
  dense, suppose that $\cN_{(X,\cS)}^{-1}(\widetilde{S_1}\cap
  \widetilde{S_2}^c) = S_1 \cap S_2^c=\emptyset$, that is, $S_1\se
  S_2$. Then, there is no prime filter containing~$S_1$ and
  omitting~$S_2$, which means that $\widetilde{S_1}\cap
  \widetilde{S_2}^c$ must be empty.
\end{proof}

We remark that, for every $T_0$ Pervin space $(X, \cS)$, the map
$\cN_{(X,\cS)}: (X, \cS) \hookrightarrow (\pf(\cS), \widetilde \cS)$
is the completion of~$(X, \cS)$ (cf.~\cite{GehrkeGrigorieffPin2010,
  pin17}).

We may now prove the following characterization of $T_0$ complete
Pervin spaces.

\begin{theorem}\label{char}
  Let $(X, \cS)$ be a $T_0$ Pervin space. Then, the following are
  equivalent:
  \begin{enumerate}
  \item \label{char1} $(X,\cS)$ is Cauchy complete;
  \item \label{char2} $(X,\cS)$  is complete;
  \item \label{char3} Every extremal monomorphism
    $(X,\cS)\hookrightarrow (Y,\cT)$ into a $T_0$ Pervin space whose
    symmetrization is dense is an isomorphism;
  \item \label{char4} $(X,\cS)$ is isomorphic to $\pff(\cS)$;
  \item \label{item:2} $(X,\cS)$ is isomorphic to $\ptf(\idl(\cS),
    \cS)$;
  \item \label{char6} $(X, \cS)$ is isomorphic to a Pervin space of
    the form $\ptf(\idl(D), D)$, for some lattice~$D$;
  \item \label{char5} $(X, \cS)$ is isomorphic to a Pervin space of
    the form $\pff(D)$, for some lattice~$D$.
  \end{enumerate}
\end{theorem}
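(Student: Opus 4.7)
My plan is to establish the seven-fold equivalence in three stages: first dispatch the lattice-theoretic cluster (4)--(7) via Lemma~\ref{l:8} and idempotency of $\lperv \dashv \pff$; then prove the main equivalence (1)~$\Leftrightarrow$~(4) by explicit construction; and finally incorporate (2) and (3) into the cycle through the neighborhood map $\cN_{(X,\cS)}$ together with Lemmas~\ref{tech1} and~\ref{tech3}.

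For the first stage, (4)~$\Leftrightarrow$~(5) and (6)~$\Leftrightarrow$~(7) are immediate from Lemma~\ref{l:8}, and (4)~$\Rightarrow$~(7) is trivial on setting $D := \cS$. For (7)~$\Rightarrow$~(4) I would exploit the idempotency of $\lperv \dashv \pff$ from Lemma~\ref{lvdashpf}: if $(X,\cS) \cong \pff(D)$, the lattice component reads off as $\cS \cong \widetilde D$, while applying $\pff$ to the counit $\Phi_D : D \to \widetilde D$ of that idempotent adjunction yields an isomorphism $\pff(\widetilde D) \cong \pff(D)$, so chaining gives $\pff(\cS) \cong \pff(\widetilde D) \cong \pff(D) \cong (X,\cS)$.

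For the main equivalence (1)~$\Leftrightarrow$~(4), I plan to establish (4)~$\Rightarrow$~(1) by verifying directly that $\pff(\cS)$ is Cauchy complete: given a Cauchy filter $F$ on $\pf(\cS)$, set $G := \{S \in \cS : \widetilde S \in F\}$, which is a prime filter of $\cS$ since $S \mapsto \widetilde S$ is a lattice homomorphism and $F$ is Cauchy. To show $F$ converges to $G$, I would take an arbitrary $V \in F \cap \overline{\widetilde\cS}$ written in disjunctive normal form $\bigcup_i (\widetilde{A_i} \cap \widetilde{B_i}^c)$ with $A_i, B_i \in \cS$; if no disjunct belonged to $F$, the Cauchy property would produce for each $i$ a witness in $F$ disjoint from $\widetilde{A_i} \cap \widetilde{B_i}^c$, and intersecting these finitely many witnesses with $V$ would contradict properness of $F$. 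For (1)~$\Rightarrow$~(4), Lemma~\ref{tech3} already makes $\cN_{(X,\cS)} : (X,\cS) \to \pff(\cS)$ an extremal monomorphism into a $T_0$ Pervin space, so only surjectivity remains: given $F \in \pf(\cS)$, the filter $F'$ on $\cP(X)$ generated by $F \cup \{S^c : S \in \cS \setminus F\}$ is proper (using primality of $F$ to verify the finite-intersection property) and Cauchy by construction, so by (1) it converges to some $x$, and Lemma~\ref{l:7} then yields $\cN_{(X,\cS)}(x) = F$.

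To close the cycle through (2) and (3), I plan to prove (1)~$\Rightarrow$~(3) and the symmetric analogue (1)~$\Rightarrow$~(2) uniformly: given the hypothesized $m$ into a $T_0$ Pervin space, with its symmetrization dense, and a point $y$ in the codomain, the ultrafilter of $y$-neighborhoods in the Boolean closure of the codomain lattice extends to a Cauchy filter on $\cP(Y)$ whose preimage under $\psym(m)$ is Cauchy on $(X,\overline\cS) = (X,\cS)$ by Lemma~\ref{tech1}; by (1) it converges to some $x$, and the $T_0$ assumption on the codomain forces $f(x) = y$, so $m$ is surjective and hence an isomorphism. Closing the loop, (3)~$\Rightarrow$~(4) follows on applying (3) to $\cN_{(X,\cS)}$ (all hypotheses supplied by Lemma~\ref{tech3} and $T_0$-ness of $\pff(\cS)$), while (2)~$\Rightarrow$~(4) applies (2) to $\psym(\cN_{(X,\cS)})$ and then combines the resulting bijection of underlying sets with $\cN_{(X,\cS)}$ being an extremal monomorphism in $\perv$ to upgrade it to a $\perv$-isomorphism. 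I expect the main technical obstacle to be the disjunctive-normal-form convergence verification in (4)~$\Rightarrow$~(1), where the Boolean structure of $\overline{\widetilde\cS}$ must be carefully unwound.
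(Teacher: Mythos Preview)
Your proposal is correct and follows essentially the same route as the paper: the core arguments---pulling back Cauchy filters via Lemma~\ref{tech1} for (1)$\Rightarrow$(2)/(3), verifying directly that $\pff(D)$ is Cauchy complete, and applying Lemma~\ref{tech3} to $\cN_{(X,\cS)}$ to reach~(4)---are identical. Your organization adds some harmless redundancy (the direct (1)$\Rightarrow$(4) and the idempotency argument for (7)$\Rightarrow$(4) are both correct but unnecessary once you have (1)$\Rightarrow$(3)$\Rightarrow$(4) and (7)$\Rightarrow$(1)), whereas the paper threads a single cycle (1)$\Rightarrow$(2)$\Leftrightarrow$(3)$\Rightarrow$(4)$\Leftrightarrow$(5)$\Rightarrow$(6)$\Leftrightarrow$(7)$\Rightarrow$(1).
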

\begin{proof}
  Noting that extremal monomorphisms are preserved under
  symmetrization, the equivalence between~\ref{char2} and~\ref{char3}
  follows. That~\ref{char3} implies~\ref{char4} is a consequence of
  Lemma~\ref{tech3}. The equivalences between~\ref{char4}
  and~\ref{item:2} and between~\ref{char6} and~\ref{char5} follow from
  Lemma~\ref{l:8}, while that~\ref{item:2} implies~\ref{char6} is
  trivial.  It remains to show that~\ref{char1} implies~\ref{char2}
  and that~\ref{char5} implies~\ref{char1}.

  Suppose that~\ref{char1} holds, and let
  $m:(X,\overline{\cS})\hookrightarrow (Y,\cC)$ be a dense extremal
  monomorphism into a symmetric $T_0$ Pervin space $(Y, \cC)$. We need
  to show that~$m$ is an isomorphism, that is, that~$m$ is
  surjective. Given $y \in Y$, consider the filter $F_y := \up\{C \in
  \cC \mid y \in C\}$. Since $\cC$ is a Boolean algebra, $F_y$ is a
  Cauchy filter. By Lemma~\ref{tech1}, the filter $m^{-1}(F_y)$ is
  Cauchy as well. Since $(X, \cS)$ is Cauchy complete, $m^{-1}(F_y)$
  converges to some point $x \in X$. We claim that $y = m(x)$. Since
  $(Y,\cC)$ is $T_0$ and $\cC$ is a Boolean algebra, we have that $y =
  m(x)$ provided $y \in C$ implies $m(x) \in C$, for every $C \in
  \cC$. We let $C \in \cC$ be such that $y \in C$. Then, $m^{-1}(C)$
  belongs to $m^{-1}(F_y) \cap \overline{\cS}$ and, since
  $m^{-1}(F_y)$ converges to $x$, by Lemma~\ref{l:7}, it follows that
  $x \in m^{-1}(C)$, that is, $m(x) \in C$, as required.

  Finally, let us assume that we have a Pervin space of the form
  $\pff(D) = (\pf(D), \widetilde D)$, with $D$ a distributive
  lattice. Suppose that $F\se \cP(\pf(D))$ is a Cauchy filter and set
  $P:= \{a \in D \mid \widetilde a \in F\}$. Clearly, $P$ is a filter
  of~$D$. Let us show that $P$ is prime. Let $a,b \in D$ be such that
  $a \vee b \in P$, and suppose that $a \notin P$. Equivalently, $a,b$
  are such that $\widetilde {a \vee b} = \widetilde a \cup \widetilde
  b \in F$ and $\widetilde a \notin F$. Since $F$ is Cauchy, it
  follows that $(\widetilde a)^c$ belongs to~$F$ and thus, so does
  $\widetilde b \supseteq (\widetilde a)^c \cap \widetilde b =
  (\widetilde a)^c \cap(\widetilde a \cup \widetilde b)$. This shows
  that $b \in P$ as required. We now claim that $F$ converges to
  $P$. By Lemma~\ref{l:7}, it suffices to show that $P \in \widetilde
  a \cap (\widetilde b)^c$ whenever $a,b \in D$ are such that
  $\widetilde a \cap (\widetilde b)^c \in F$. Since $F$ is proper,
  having $\widetilde a \cap (\widetilde b)^c \in F$ implies that
  $\widetilde a \in F$ and $\widetilde b \notin F$. But by definition
  of~$P$, this means that $P \in \widetilde a \cap (\widetilde b)^c$,
  as required.
\end{proof}

From now on, we will drop the use of \emph{Cauchy complete} and call
\emph{complete Pervin space} every Pervin space satisfying the
equivalent conditions of Theorem~\ref{char}.

In turn, completeness of Frith frames is discussed
in~\cite{borlido21}, where a characterization of complete Frith frames
using both dense extremal epimorphisms and Cauchy maps is
given. Here, it will suffice to consider the following definitions:
\begin{definition}[{\cite{borlido21}}]
  We say that a symmetric Frith frame $(L, B)$ is \emph{complete} if
  every dense extremal epimorphism $(M, C) \twoheadrightarrow (L, B)$
  with $(M, C)$ symmetric is an isomorphism. More generally, a Frith
  frame $(L, S)$ is \emph{complete} provided its symmetric
  reflection~$\fsym(L, S)$ is complete.  A \emph{completion} of $(L,
  S)$ is a complete Frith frame $(M, T)$ together with a dense
  extremal epimorphism $(M, T) \twoheadrightarrow (L, S)$.
\end{definition}

The fact that completeness of a Frith frame $(L, S)$ is equivalent to
completeness of the associated quasi-uniform frame is shown
in~\cite[Proposition~7.2]{borlido21}. Moreover, every Frith frame $(L,
S)$ has a unique, up to isomorphism, completion, which is given by the
counit~\eqref{eq:5} of the adjunction $\idlf \dashv \lfrith$. Also
in~\cite{borlido21}, we have shown the following:

\begin{theorem}
  [{\cite[{Proposition 4.6 and
      Theorem~7.7}]{borlido21}}]\label{cfrith} Let $(L, S)$ be a Frith
  frame. Then, the following are equivalent:
  \begin{enumerate}
  \item $(L, S)$ is complete;
  \item $(L, S)$ is coherent;
  \item\label{item:4} $L$ is isomorphic to the ideal completion
    $\idl(S)$ of $S$.
  \end{enumerate}
\end{theorem}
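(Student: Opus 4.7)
The plan is to route both equivalences through condition~(3) and to rely on the completion counit $c_{(L,S)}: (\idl(S), S) \to (L, S)$ of equation~\eqref{eq:5}. I would begin by observing that $c_{(L,S)}$ is always a dense extremal epimorphism of Frith frames: it is an extremal epi by Proposition~\ref{p:4}, since $c_{(L, S)}[S] = S$; and it is dense because every ideal $J$ of $S$ satisfying $\bigvee J = 0$ must equal $\{0\}$ (ideals are downward-closed and contain $0$). Since the text recalls that $(\idl(S), S)$, together with this counit, is \emph{the} completion of $(L, S)$, every argument below reduces to manipulating this one map.

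For $(1) \Leftrightarrow (3)$, I would proceed as follows. If $L \cong \idl(S)$ via $c_{(L, S)}$, then $(L, S)$ is (isomorphic to) its own completion and hence complete, proving $(1)$. Conversely, if $(L, S)$ is complete, then the identity morphism $(L, S) \to (L, S)$ is trivially a dense extremal epimorphism from a complete Frith frame, that is, another completion of $(L, S)$; by uniqueness of completions up to isomorphism (from~\cite{borlido21}), there is an isomorphism $(\idl(S), S) \cong (L, S)$ compatible with $c_{(L,S)}$, which forces $L \cong \idl(S)$ via $J \mapsto \bigvee J$.

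For $(2) \Leftrightarrow (3)$, I would invoke the classical correspondence between coherent frames and ideal completions of bounded distributive lattices. If $L \cong \idl(S)$, then the compact elements of $L$ correspond, under this isomorphism, precisely to the principal ideals $\downarrow s$ for $s \in S$, so $S$ is exactly the sublattice of compact elements of $L$, which is the content of coherence for the Frith frame $(L, S)$. Conversely, if $(L, S)$ is coherent, so that $S$ is the bounded sublattice of compact elements of the coherent frame $L$, then the standard representation theorem yields the required isomorphism $L \cong \idl(S)$. The main obstacle I foresee is the $(1) \Rightarrow (3)$ direction: one has to unfold the definition of completeness via the symmetrization functor $\fsym$ and then invoke the (nontrivial) uniqueness of completions proved in~\cite{borlido21}. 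The remaining steps are routine applications of Proposition~\ref{p:4} and of the standard theory of coherent frames.
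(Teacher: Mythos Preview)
The paper does not prove this theorem at all: it is quoted from \cite[Proposition~4.6 and Theorem~7.7]{borlido21} and used as a black box throughout. There is therefore no ``paper's own proof'' to compare your proposal against.

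That said, your sketch is sound and is exactly the kind of argument one would extract from the facts the present paper recalls. The equivalence $(2)\Leftrightarrow(3)$ is the standard representation theorem for coherent frames, and your treatment of $(1)\Leftrightarrow(3)$ via the counit $c_{(L,S)}$ and uniqueness of completions is correct once one grants (as the text does, just before the theorem) that $(\idl(S),S)$ together with $c_{(L,S)}$ \emph{is} a completion of $(L,S)$. Two small remarks. First, condition~(3) literally only asserts a frame isomorphism $L\cong\idl(S)$; you are tacitly reading it as ``$c_{(L,S)}$ is an isomorphism'', which is the intended meaning and is needed for your argument to go through. Second, the genuine content you are importing from~\cite{borlido21} is not the uniqueness of completions per se but the fact that $(\idl(S),S)$ is itself complete (i.e.\ that its symmetrization $(\cC_S\idl(S),\overline S)$ admits no proper dense symmetric extremal cover); without this, neither direction of $(1)\Leftrightarrow(3)$ gets off the ground. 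You correctly flag this as the nontrivial step.
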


Part~\ref{item:4} of this characterization, together with
Theorem~\ref{char}\ref{char6}, yield the following:

\begin{corollary}\label{restrict1}
  A Pervin space is $T_0$ and complete if and only if it is of the
  form $\ptf(L, S)$, for some complete Frith frame $(L, S)$.
\end{corollary}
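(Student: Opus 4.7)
The plan is to prove both directions by directly invoking the two characterization theorems already established in the paper, namely Theorem~\ref{char} (for $T_0$ complete Pervin spaces) and Theorem~\ref{cfrith} (for complete Frith frames). Both results involve ideal completions of bounded distributive lattices, so they should slot together cleanly.

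For the forward direction, I would start with a $T_0$ complete Pervin space $(X, \cS)$. By Theorem~\ref{char}, condition~\ref{char6}, there exists a bounded distributive lattice $D$ with $(X, \cS) \cong \ptf(\idl(D), D)$. I would then set $(L, S) := (\idl(D), D)$. This is a Frith frame, because $D$ is join-dense in its ideal completion (every ideal is the union of the principal ideals of its members). Completeness of $(L, S)$ is immediate from Theorem~\ref{cfrith}, which gives the equivalence between completeness and the isomorphism $L \cong \idl(S)$.

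For the backward direction, I would start from $(X, \cS) \cong \ptf(L, S)$ with $(L, S)$ a complete Frith frame. Applying Theorem~\ref{cfrith}\ref{item:4} yields $L \cong \idl(S)$, and transporting this isomorphism through $\ptf$ gives $(X, \cS) \cong \ptf(\idl(S), S)$. Setting $D := S$ presents $(X, \cS)$ in the form $\ptf(\idl(D), D)$, so by Theorem~\ref{char}\ref{char6} (combined with the equivalences with~\ref{char2}) it remains only to verify that $(X, \cS)$ is in fact $T_0$. This last point is the only thing not immediately delivered by the cited results, but it is essentially automatic: for any Frith frame $(L, S)$, the topology $\Omega_{\widehat S}(\pt(L))$ agrees with $\widehat L$ (using join-density of $S$ in $L$), and $(\pt(L), \widehat L)$ is sober, hence $T_0$.

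I do not foresee a substantive obstacle here; the statement is really a repackaging of the two main theorems of the section, with the small bookkeeping verification that $\ptf(L, S)$ is always $T_0$ and that $D$ is join-dense in $\idl(D)$. The main care needed is to keep the correspondences $D \leftrightarrow S$ and $\idl(D) \leftrightarrow L$ straight when moving between the Pervin side and the Frith side.
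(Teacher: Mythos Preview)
Your proposal is correct and follows the same approach as the paper, which simply notes that the corollary is immediate from Theorem~\ref{char}\ref{char6} together with Theorem~\ref{cfrith}\ref{item:4}. You have in fact been more careful than the paper: you explicitly flag and justify the $T_0$ verification in the backward direction (via sobriety of $(\pt(L),\widehat L)$), which the paper leaves implicit.
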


In particular, the functor $\ptf: \ffrm\to \perv$ restricts and
co-restricts to a functor $\cffrm \to \cperv$.  In order to show that
$\bf \Omega$, too, restricts correctly, we will need to use the Prime
Ideal Theorem.

\begin{proposition}\label{pit2}
  The following are equivalent.
  \begin{enumerate} 
  \item \label{pit2a}The Prime Ideal Theorem holds;
  \item \label{pit2c}If $(X,\cS)$ is a $T_0$ complete Pervin space,
    then the Frith frame ${\bf\Omega}(X, \cS)$ is complete.
  \item \label{item:3} If $(X,\cB)$ is a $T_0$ complete symmetric
    Pervin space, then the Frith frame ${\bf\Omega}(X, \cB)$ is
    complete.
  \end{enumerate}
\end{proposition}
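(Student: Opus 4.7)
The plan is to establish the cyclic chain $\ref{pit2a} \Rightarrow \ref{pit2c} \Rightarrow \ref{item:3} \Rightarrow \ref{pit2a}$. The middle implication is immediate since every symmetric Pervin space is, in particular, a Pervin space.

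For $\ref{pit2a} \Rightarrow \ref{pit2c}$, I will combine the characterizations already at hand. Given a $T_0$ complete Pervin space $(X, \cS)$, Theorem~\ref{char}\ref{item:2} yields $(X, \cS) \cong \ptf(\idl(\cS), \cS)$, and applying $\bf \Omega$ gives ${\bf \Omega}(X, \cS) \cong {\bf \Omega}\ptf(\idl(\cS), \cS)$. Under PIT, Theorem~\ref{pit} ensures $\idl(\cS)$ is spatial, so $(\idl(\cS), \cS)$ is a fixpoint of the dual adjunction ${\bf \Omega} \dashv \ptf$ by Theorem~\ref{dualadj}. Consequently, the counit at $(\idl(\cS), \cS)$ is an isomorphism, yielding ${\bf \Omega}(X, \cS) \cong (\idl(\cS), \cS)$, which is complete by Theorem~\ref{cfrith}\ref{item:4}.

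For $\ref{item:3} \Rightarrow \ref{pit2a}$, the idea is to apply \ref{item:3} to the Pervin space $\pff(B) = (\pf(B), \widetilde{B})$ associated with an arbitrary Boolean algebra $B$. By Theorem~\ref{char}\ref{char5}, this Pervin space is $T_0$ and complete. It is also symmetric: since $B$ is Boolean, each prime filter of $B$ is an ultrafilter, hence $(\widetilde{a})^c = \widetilde{\neg a}$ for every $a \in B$, showing $\widetilde{B}$ is closed under complements in $\cP(\pf(B))$. Applying \ref{item:3} together with Theorem~\ref{cfrith}\ref{item:4} then yields the isomorphism $\Omega_{\widetilde{B}}(\pf(B)) \cong \idl(\widetilde{B})$; since the left-hand side is a topology, $\idl(\widetilde{B})$ is spatial for every Boolean algebra $B$. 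One then invokes the classical reduction of the Prime Ideal Theorem for distributive lattices to its Boolean counterpart to conclude \ref{pit2a}.

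I anticipate the main difficulty in the last step: extracting full PIT from spatiality of $\idl(\widetilde B)$. The subtlety is that $\widetilde B$ is the image of $\Phi_B$, so the spatiality statement directly concerns only Boolean algebras that embed into a powerset; one must verify this class is rich enough to activate the Boolean Prime Ideal Theorem, and then appeal to the standard equivalence between the latter and the Prime Ideal Theorem for bounded distributive lattices recorded in Theorem~\ref{pit}.
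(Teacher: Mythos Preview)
Your arguments for $\ref{pit2a}\Rightarrow\ref{pit2c}$ and $\ref{pit2c}\Rightarrow\ref{item:3}$ are correct and coincide with the paper's.

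For $\ref{item:3}\Rightarrow\ref{pit2a}$ there is a genuine gap, which you yourself flag but do not close. From \ref{item:3} you only obtain that $\idl(\widetilde B)$ is spatial, where $\widetilde B=\Phi_B[B]$. Without the Prime Ideal Theorem already in hand, $\Phi_B$ need not be injective for an arbitrary Boolean algebra~$B$; indeed, injectivity of $\Phi_B$ is precisely the statement that $B$ has enough ultrafilters to separate elements, which is a form of the very principle you are trying to prove. So ``this class is rich enough to activate the Boolean Prime Ideal Theorem'' is not something you can simply invoke: it is the heart of the matter, and as written the argument is circular.

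The paper sidesteps this by working with a specific Boolean algebra for which no choice is needed. It uses the equivalent form of PIT in Theorem~\ref{pit} (every proper filter on a set extends to an ultrafilter), takes $B=\cP(X)$ for a given set~$X$, and applies \ref{item:3} to the $T_0$ complete symmetric Pervin space $(\pf(B),\widetilde B)$. Completeness of ${\bf\Omega}(\pf(B),\widetilde B)$ makes the underlying topological space compact; then, given a proper filter $F\subseteq\cP(X)$, the family $\{\widetilde S\mid S\in F\}$ has the finite intersection property because principal ultrafilters $\uparrow\{x\}$ (available in ZF) witness nonemptiness of each~$\widetilde S$. Compactness yields a point in $\bigcap_{S\in F}\widetilde S$, i.e.\ an ultrafilter extending~$F$. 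Your route can in fact be repaired along similar lines: take $B=\cP(X)$, note that principal ultrafilters already make $\Phi_{\cP(X)}$ injective in ZF so that $\widetilde B\cong\cP(X)$, and then extract the ultrafilter extension property from spatiality of $\idl(\cP(X))$. But that repair is essentially the paper's idea, and without it your proof of $\ref{item:3}\Rightarrow\ref{pit2a}$ is incomplete.
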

\begin{proof}
  We first show that~\ref{pit2a} implies~\ref{pit2c}. Let $(X, \cS)$
  be a $T_0$ complete Pervin space. By Theorem~\ref{char}, we may
  assume, without loss of generality, that $(X, \cS) = \ptf(\idl(\cS),
  \cS)$. Since we are assuming that the Prime Ideal Theorem holds, by
  Theorem~\ref{pit}, $\idl(\cS)$ is a spatial frame, and thus so is
  $(\idl(\cS), \cS)$ (cf. Theorem~\ref{dualadj}). Therefore, we have
  \[{\bf \Omega}(X, \cS) = {\bf \Omega }\circ \ptf(\idl(\cS), \cS)
    \cong (\idl(\cS), \cS),\]
  which, by Theorem~\ref{cfrith}, is a complete Frith frame.

  Clearly, \ref{pit2c} implies \ref{item:3}.
  Finally, suppose that~\ref{item:3} holds, and let $X$ be a set. By
  Theorem~\ref{pit}, it suffices to show that every proper filter~$F$
  on~$X$ is contained in a prime filter. For the sake of readability,
  we set $B:= \cP(X)$. Note that showing that $F$ is contained in some
  prime filter is equivalent to showing that the intersection
  $\bigcap_{S \in F} \widetilde S$ is nonempty. For that, we consider
  the $T_0$ symmetric Pervin space $(\pf (B), \widetilde{B})$. By
  Theorem~\ref{char}, this is complete, and by hypothesis, so is the
  Frith frame ${\bf \Omega}(\pf(B),\, \widetilde{B}) =
  (\Omega_{\widetilde{B}}(\pf(B)), \widetilde{B})$. In particular, the
  topological space $(\pf(B), \Omega_{\widetilde{B}}(\pf(B)))$ is
  compact, and thus, it suffices to show that $\{\widetilde S \mid S
  \in F\}$ has the finite intersection property. In turn, since $F$ is
  closed under finite meets, this is the same as showing that
  $\widetilde S \neq \emptyset$ for every $S \in F$.  But since $F$ is
  proper, every $S \in F$ is nonempty, and given $x \in S$, we have
  $\up \{x\}\in \widetilde S$ and $\widetilde S$ is nonempty as
  well. This finishes the proof.
\end{proof}

\begin{corollary}\label{c:4}
  The following are equivalent.
  \begin{enumerate} 
  \item The Prime Ideal Theorem holds;
  \item If $(X,\cS)$ is a $T_0$ complete Pervin space, then the
    compact open subsets of $(X,\, \Omega_\cS(X))$ are precisely the
    elements of $\cS$.
  \end{enumerate}
\end{corollary}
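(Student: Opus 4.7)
The plan is to bridge the two conditions through Proposition~\ref{pit2} and Theorem~\ref{cfrith}: the former identifies the Prime Ideal Theorem with the completeness of ${\bf\Omega}(X, \cS)$ for every $T_0$ complete Pervin space, while the latter tells us that complete Frith frames are precisely the coherent ones. The key observation is that the compact elements of the frame $\Omega_\cS(X)$ are exactly the compact open subsets of the topological space $(X, \Omega_\cS(X))$, so condition~(b) is asking for $\cS$ to coincide with this sublattice of compact elements. Recalling also that, for a coherent Frith frame $(L, S)$, the isomorphism $L \cong \idl(S)$ of Theorem~\ref{cfrith}\ref{item:4} identifies $S$ with the sublattice of compact elements of $L$, the argument should reduce to a direct chaining of these facts.

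For the forward direction, assume the Prime Ideal Theorem holds and let $(X, \cS)$ be a $T_0$ complete Pervin space. By Proposition~\ref{pit2}\ref{pit2c}, ${\bf\Omega}(X, \cS) = (\Omega_\cS(X), \cS)$ is a complete Frith frame, and Theorem~\ref{cfrith} then yields that it is coherent. Therefore $\cS$ is precisely the sublattice of compact elements of $\Omega_\cS(X)$, that is, the lattice of compact open subsets of $(X, \Omega_\cS(X))$, as desired.

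For the converse, assume~(b) and appeal to Proposition~\ref{pit2}\ref{item:3}: it suffices to prove that ${\bf\Omega}(X, \cB)$ is complete for every $T_0$ complete symmetric Pervin space $(X, \cB)$. Applying the hypothesis, $\cB$ coincides with the lattice of compact open subsets of $(X, \Omega_\cB(X))$. Since $\cB$ join-generates $\Omega_\cB(X)$ by construction of the topology, the compact elements of $\Omega_\cB(X)$ form a join-dense sublattice, making $\Omega_\cB(X)$ a coherent frame whose sublattice of compact elements is exactly~$\cB$; hence $(\Omega_\cB(X), \cB)$ is a coherent Frith frame, and so complete by Theorem~\ref{cfrith}. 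I do not foresee any substantial obstacle once the identification between compact elements of a frame of opens and compact open subsets is made explicit; the main conceptual step is to recognize that condition~(b) is a verbatim translation of coherence of the Frith frame ${\bf\Omega}(X, \cS)$, after which the equivalence collapses onto Proposition~\ref{pit2} and Theorem~\ref{cfrith}.
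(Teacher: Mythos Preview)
Your proposal is correct and follows essentially the same route as the paper: reduce the equivalence to Proposition~\ref{pit2} and then use Theorem~\ref{cfrith} to translate completeness of ${\bf\Omega}(X,\cS)$ into the statement that $\cS$ is exactly the sublattice of compact opens. The only cosmetic difference is that for the backward implication you invoke the symmetric clause~\ref{item:3} of Proposition~\ref{pit2} while the paper uses the general clause~\ref{pit2c}, but this changes nothing of substance.
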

\begin{proof}
  By the equivalence between \ref{pit2a} and~\ref{pit2c} of
  Proposition~\ref{pit2}, it suffices to show that ${\bf\Omega}(X,
  \cS)$ is a complete Frith frame if and only if the compact open
  subsets of $(X,\, \Omega_\cS(X))$ are precisely the elements of
  $\cS$. Observe that, by Theorem~\ref{cfrith}, ${\bf \Omega}(X, \cS)
  = (\Omega_\cS(X), \, \cS)$ is complete if and only if it is
  coherent. Thus, $\cS$ is the set of compact elements of the frame
  $\Omega_\cS(X)$, hence of compact open subsets of the topological
  space $(X, \, \Omega_\cS(X))$.
\end{proof}

We have just proved the following:

\begin{corollary}\label{c:3}
  If the Prime Ideal Theorem holds, then the adjunction ${\bf \Omega}:
  \perv \lra \ffrm: \ptf$ restricts and co-restricts to an adjunction
  between $T_0$ complete Pervin spaces and complete Frith frames.
\end{corollary}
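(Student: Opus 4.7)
The plan is to assemble the restriction from two results already proved. First I would invoke Corollary~\ref{restrict1}, which characterizes $T_0$ complete Pervin spaces as those isomorphic to $\ptf(L,S)$ for a complete Frith frame $(L,S)$. This immediately gives that $\ptf\colon \ffrm \to \perv$ restricts and co-restricts to a functor $\cffrm \to \cperv$, and crucially this half requires no use of the Prime Ideal Theorem.

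Next, assuming the Prime Ideal Theorem, I would apply the implication~\ref{pit2a}$\Rightarrow$\ref{pit2c} of Proposition~\ref{pit2}: if $(X,\cS)$ is a $T_0$ complete Pervin space, then ${\bf\Omega}(X,\cS)$ is a complete Frith frame. Hence ${\bf\Omega}\colon\perv\to\ffrm$ also restricts and co-restricts to a functor $\cperv\to\cffrm$. With both functors suitably restricted, the adjunction transfers automatically: since $\cperv$ and $\cffrm$ are full subcategories of $\perv$ and $\ffrm$, every component of the unit $\eta\colon \mathrm{Id}\Rightarrow \ptf\circ{\bf\Omega}$ and of the counit $\varepsilon\colon {\bf\Omega}\circ \ptf \Rightarrow \mathrm{Id}$ at an object of the restricted subcategory is a morphism of that subcategory, and the triangle identities continue to hold verbatim.

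There is no real obstacle here; the entire content sits in Proposition~\ref{pit2}, and this corollary is essentially a packaging statement. The only thing to double-check is that taking the restricted functors with the restricted unit and counit still yields $\bd{\Omega}\dashv \ptf$ (rather than merely a pair of functors between the two subcategories), but this is immediate from fullness of the embeddings $\cperv\hookrightarrow\perv$ and $\cffrm\hookrightarrow\ffrm$ together with the fact that, for $(X,\cS)\in\cperv$ and $(L,S)\in\cffrm$, both $\ptf\circ{\bf\Omega}(X,\cS)$ and ${\bf\Omega}\circ\ptf(L,S)$ again lie in the respective subcategories by the two restriction statements just invoked.
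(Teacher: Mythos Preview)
Your proposal is correct and matches the paper's approach exactly: the paper simply states ``We have just proved the following'' immediately after Corollary~\ref{restrict1} and Proposition~\ref{pit2}, treating the result as a direct consequence of those two facts. Your additional remarks about why the adjunction transfers to the full subcategories via the unit, counit, and triangle identities make explicit what the paper leaves implicit, but the argument is the same.
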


We may now show the main result of this section.

\begin{theorem}\label{t:1}
  If the Prime Ideal Theorem holds, then the adjunction ${\bf
    \Omega}:\perv \lra \ffrm: \ptf$ restricts and co-restricts to a
  duality between the categories $\cperv$ of $T_0$ complete Pervin spaces and the category $\cffrm$ of complete Frith frames.
\end{theorem}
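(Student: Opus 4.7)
The plan is to upgrade the restricted adjunction from Corollary~\ref{c:3} to an equivalence by showing that, under the Prime Ideal Theorem, every complete Frith frame is spatial and every $T_0$ complete Pervin space is sober. By Theorem~\ref{dualadj}, this is exactly what is needed for both the unit and the counit of the restricted adjunction to become componentwise isomorphisms, which in turn delivers the desired duality.

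The Frith side is the easier of the two. If $(L,S)\in\cffrm$, then by Theorem~\ref{cfrith}\ref{item:4} we have $L\cong\idl(S)$ with $S$ a bounded distributive lattice; invoking the Prime Ideal Theorem via Theorem~\ref{pit} we conclude that $\idl(S)$, and hence $L$, is spatial. By Theorem~\ref{dualadj}, $(L,S)$ is then a fixpoint of $\Omega\dashv\ptf$, i.e.\ the counit $\epsilon_{(L,S)}\colon \Omega\ptf(L,S)\to (L,S)$ is an isomorphism of Frith frames.

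For the Pervin side, let $(X,\cS)\in\cperv$. By Corollary~\ref{restrict1} we may write $(X,\cS)\cong \ptf(L,S)$ for some complete Frith frame $(L,S)$. From the previous paragraph, $\epsilon_{(L,S)}$ is an isomorphism, and therefore so is $\ptf(\epsilon_{(L,S)})$. The triangle identity for $\Omega\dashv\ptf$ gives
\[
  \ptf(\epsilon_{(L,S)})\circ \eta_{\ptf(L,S)} = \mathrm{id}_{\ptf(L,S)},
\]
forcing $\eta_{\ptf(L,S)}$ to be the inverse of an isomorphism, hence itself an isomorphism. Transporting along $(X,\cS)\cong \ptf(L,S)$, the unit $\eta_{(X,\cS)}$ is an isomorphism, so $(X,\cS)$ is sober.

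Combining these two fixpoint statements with Corollary~\ref{c:3} yields the desired duality between $\cperv$ and $\cffrm$. I expect no serious obstacle: the argument is essentially formal once the two spatiality/sobriety observations are in place, and the only substantive input is the spatiality of $\idl(S)$, which is exactly where the Prime Ideal Theorem is consumed.
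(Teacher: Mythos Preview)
Your proof is correct and follows essentially the same strategy as the paper: invoke Corollary~\ref{c:3} for the restricted adjunction, then show complete Frith frames are spatial (via Theorem~\ref{cfrith} and Theorem~\ref{pit}) and $T_0$ complete Pervin spaces are sober. The only minor difference is in the sobriety step: the paper observes directly that any Pervin space of the form $\ptf(\idl(D),D)$ is sober (since $\ptf$ always lands in sober objects, no PIT needed here), whereas you route through spatiality of $(L,S)$ and the triangle identity---a valid but slightly less economical argument.
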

\begin{proof}
  Recall from Theorem~\ref{dualadj} that the adjunction
  ${\bf\Om}\dashv \ptf$ induces a duality between sober Pervin spaces
  and spatial Frith frames.  Thus, because of Corollary~\ref{c:3}, it
  remains to show that $T_0$ complete Pervin spaces are sober and
  complete Frith frames are spatial.  By Theorem~\ref{char}, a $T_0$
  Pervin space is complete if and only if it is isomorphic to the
  Pervin space $\ptf(\idl(D), D)$, for some lattice~$D$, and thus a
  complete Pervin space is sober. In turn, complete Frith frames are
  spatial because, by Theorem~\ref{cfrith}, they are of the form
  $(\idl(D), D)$ for some lattice~$D$ and, by the Prime Ideal Theorem,
  these are spatial Frith frames (cf. Theorems~\ref{pit}
  and~\ref{dualadj}).
\end{proof}


\section{Stone-type dualities}\label{sec:duality}

In this section we will see how several Stone-type dualities relate to
the duality between $T_0$ complete Pervin
spaces and complete Frith frames shown in the previous section (cf. Theorem~\ref{t:1}). Note that
the fact that every $T_0$ complete Pervin space defines a spectral, a
Priestley, and a pairwise Stone space is already observed
in~\cite{pin17}, but no proof is provided. Here, we will give the
functorial details of this assignment, and use Theorem~\ref{t:1} to
interpret Stone-type dualities as a restriction an co-restriction of
the dual adjunction ${\bf \Omega}: \perv \lra \ffrm:\ptf$ along
\emph{full} subcategory embeddings.

\subsection{Stone duality}\label{sec:stone}
Stone duality establishes that the categories of bounded distributive
lattices and of spectral spaces are dually equivalent. We recall that
a topological space is \emph{spectral} if it is sober, and its compact
open subsets are closed under finite intersections and form a basis of
the topology. The category of spectral spaces together with those
continuous functions such that the preimages of compact open subsets
are compact will be denoted by $\spec$.  By identifying each lattice
with the coherent frame given by its ideal completion, Stone duality
may be seen as a restriction and co-restriction of the dual adjunction
${\bf \Omega}: \Top \lra \Frm: \ptf$, but \emph{not along full
  inclusions}, as not every continuous function is a morphism of
spectral spaces, and morphisms of coherent frames are required to
preserve compact elements. We will now see that this duality may also
be seen as a restriction and co-restriction of the dual adjunction
${\bf \Omega}: \perv \lra \ffrm:\ptf$, the advantage being that
spectral spaces and bounded distributive lattices form \emph{full}
subcategories of $\perv$ and $\ffrm$, respectively.

It follows straightforwardly from Theorem \ref{cfrith} (by identifying
each lattice $S$ with the Frith frame $(\idl(S),S)$) that the
categories of complete Frith frames and of bounded distributive
lattices are equivalent, and by definition of morphism of Frith
frames, this is indeed a full subcategory of $\ffrm$ (see
also~\cite[Proposition~4.6]{borlido21}). Given the Prime Ideal
Theorem, we also have that every $T_0$ complete Pervin space defines a
spectral space.

\begin{lemma}\label{cspectral} If the Prime Ideal Theorem holds, then
  the functor $\uperv:\perv \to \Top$ restricts and co-restricts to a
  functor $\cperv \to \spec$.
\end{lemma}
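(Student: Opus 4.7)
The plan is to invoke the characterization of $T_0$ complete Pervin spaces from Theorem~\ref{char} together with Corollary~\ref{c:4}, and then check that morphisms behave as required for $\spec$.

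For the action on objects, let $(X, \cS)$ be a $T_0$ complete Pervin space. I need to verify that $(X, \Omega_\cS(X))$ is a spectral space, that is: (i) it is sober; (ii) its compact open subsets are closed under finite intersections; and (iii) they form a basis for the topology. For sobriety, Theorem~\ref{char}\ref{char6} gives $(X, \cS) \cong \ptf(\idl(\cS), \cS)$, so $\Omega_\cS(X)$ is, up to isomorphism, the spatial frame $\idl(\cS)$ (using Theorem~\ref{dualadj}); equivalently, $(X, \cS)$ is a sober Pervin space, and hence $\uperv(X, \cS)$ is a sober topological space. Assuming the Prime Ideal Theorem, Corollary~\ref{c:4} identifies the compact opens of $(X, \Omega_\cS(X))$ with the elements of $\cS$. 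Since $\cS$ is a bounded sublattice of $\cP(X)$, it is in particular closed under finite intersections, giving (ii), and by the very definition of $\Omega_\cS(X)$, the lattice $\cS$ generates the topology, so (iii) is immediate.

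For the action on morphisms, I need to check that for every morphism $f: (X, \cS) \to (Y, \cT)$ in $\cperv$, the continuous map $\uperv(f) = f : (X, \Omega_\cS(X)) \to (Y, \Omega_\cT(Y))$ sends compact opens back to compact opens. By the identification above, the compact opens of the codomain are exactly the members of $\cT$, and those of the domain are exactly the members of $\cS$. But by the definition of a morphism of Pervin spaces, $f^{-1}(\cT) \subseteq \cS$, so $f^{-1}$ sends compact opens to compact opens. Functoriality is inherited from $\uperv$, which leaves underlying functions unchanged.

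The main (and only) nontrivial step is the identification of the compact opens of $\uperv(X, \cS)$ with $\cS$, which genuinely uses the Prime Ideal Theorem through Corollary~\ref{c:4}; without it, one cannot guarantee that the frame $\Omega_\cS(X) \cong \idl(\cS)$ is spatial, and consequently cannot ensure that a spectral space results. Everything else is a direct unfolding of definitions together with the structural results already established.
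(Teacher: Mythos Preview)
Your proof is correct and follows essentially the same route as the paper: establish sobriety of $(X,\Omega_\cS(X))$, invoke Corollary~\ref{c:4} (under the Prime Ideal Theorem) to identify the compact opens with~$\cS$, and then read off the spectral-map condition from the definition of Pervin morphism. The only cosmetic difference is that you obtain sobriety directly from Theorem~\ref{char} (the space lies in the image of $\ptf$), whereas the paper invokes Theorem~\ref{t:1}; note also that the specific isomorphism $(X,\cS)\cong\ptf(\idl(\cS),\cS)$ is item~\ref{item:2} of Theorem~\ref{char} rather than~\ref{char6}.
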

\begin{proof}
  Let $(X, \cS)$ be a $T_0$ complete Pervin space.  By
  Theorem~\ref{t:1}, $(X, \cS)$ is a fixpoint of the adjunction
  ${\bf \Omega} \dashv \ptf$ and thus, it is sober, that is to say that the
  topological space $(X, \Omega_\cS(X))$ is sober (recall
  Theorem~\ref{dualadj}). By Corollary~\ref{c:4}, the lattice $\cS$ is
  the set of compact open elements of $(X,
  \,\Omega_\cS(X))$. Therefore, $(X, \Omega_\cS(X))$ is
  spectral. Finally, note that this also implies that if $f:
  (X,\cS)\to (Y, \cT)$ is a morphism of $T_0$ complete Pervin spaces,
  then is induces a morphism $f: (X, \Omega_\cS(X)) \to (Y,
  \Omega_\cT(Y))$ of spectral spaces.
\end{proof}
It remains then to show that the categories of $T_0$ complete Pervin
spaces and of spectral spaces are, in fact, isomorphic. Consider the
functor $\ko:\bd{Spec}\ra \cperv$ which sends a spectral space $(X,
\tau)$ to the Pervin space~$X$ equipped with the lattice of compact
open subsets of~$X$. It is easily seen that $\ko$ is well-defined. We
may further prove the following:

\begin{proposition}\label{p:6}
  If the Prime Ideal Theorem holds, then the functors $\uperv$ and
  $\ko$ establish an isomorphism of categories between $\spec$ and
  $\cperv$.
\end{proposition}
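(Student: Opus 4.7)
The plan is to verify that $\uperv$ and $\ko$ are strict inverses to each other, both on objects and on morphisms. Since Lemma~\ref{cspectral} already gives that $\uperv$ corestricts to a functor $\cperv \to \spec$, and since $\ko$ is easily seen to be well defined, the work is to check $\ko \circ \uperv = \mathrm{id}_{\cperv}$ and $\uperv \circ \ko = \mathrm{id}_{\spec}$ on the nose.

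First I would handle objects. For $(X,\cS) \in \cperv$, the space $\uperv(X,\cS) = (X, \Omega_\cS(X))$ is spectral by Lemma~\ref{cspectral}; applying $\ko$ equips $X$ with the lattice of compact open subsets of $(X, \Omega_\cS(X))$, and by Corollary~\ref{c:4} this lattice is exactly~$\cS$ (this is the one step that consumes the Prime Ideal Theorem). Conversely, for $(X,\tau)$ spectral, $\ko(X,\tau) = (X, K\cO(X,\tau))$, so $\uperv \circ \ko(X,\tau) = (X, \Omega_{K\cO(X,\tau)}(X))$; since $\tau$ is spectral, the compact opens form a basis, giving $\Omega_{K\cO(X,\tau)}(X) = \tau$.

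For morphisms, one just needs to check that a set map $f \colon X \to Y$ satisfies the spectral-morphism condition for the underlying topologies if and only if it satisfies the Pervin-morphism condition for the compact-open lattices. In one direction, a morphism of spectral spaces $f \colon (X,\tau_X)\to(Y,\tau_Y)$ satisfies $f^{-1}(K\cO(Y,\tau_Y)) \subseteq K\cO(X,\tau_X)$ by definition, which is exactly the condition for $f$ to be a Pervin map $\ko(X,\tau_X) \to \ko(Y,\tau_Y)$. In the opposite direction, a Pervin morphism $f\colon (X,\cS)\to(Y,\cT)$ satisfies $f^{-1}(\cT)\subseteq\cS$; since $\cT$ generates $\Omega_\cT(Y)$, this makes $f$ continuous for the underlying topologies, and since (under PIT) $\cS$ and $\cT$ are the compact-open lattices of the underlying spectral spaces, the same inclusion gives the spectral-morphism condition.

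I do not foresee a real obstacle: the content of the statement is entirely carried by Corollary~\ref{c:4}, which identifies $\cS$ with the compact opens of~$\Omega_\cS(X)$, and by the fact that $\cS$ is automatically a basis of $\Omega_\cS(X)$. Once these two identifications are invoked, equality of the functors on objects and morphisms is a direct book-keeping check, so the proof is essentially a one-line appeal to Corollary~\ref{c:4} together with the definition of spectral space.
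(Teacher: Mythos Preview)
Your proposal is correct and follows essentially the same approach as the paper: both directions reduce to Corollary~\ref{c:4} (for $\ko\circ\uperv=\mathrm{id}_{\cperv}$) and to the fact that compact opens form a basis in a spectral space (for $\uperv\circ\ko=\mathrm{id}_{\spec}$). The only difference is that you spell out the verification on morphisms explicitly, which the paper leaves implicit since both functors keep the underlying set maps unchanged.
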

\begin{proof}
  Let $(X,\cS)$ be a $T_0$ complete Pervin space. By
  Corollary~\ref{c:4}, $\ko \circ \uperv$ is the identity functor on
  $\cperv$. For a spectral space $(X, \tau)$, we have that $\uperv
  \circ \ko(X, \tau)$ is the set $X$ equipped with the topology
  generated by the lattice of compact open subsets of $(X, \tau)$. But
  by definition of spectral space, this is $(X, \tau)$ itself.
\end{proof}

\begin{corollary}
  If the Prime Ideal Theorem holds, then Stone duality for bounded
  distributive lattices may be seen as a restriction along full
  subcategory embeddings of the dual adjunction ${\bf \Omega}: \perv \lra
  \ffrm:\ptf$.
\end{corollary}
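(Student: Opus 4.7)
The plan is to paste together three results proved earlier in this section and then verify that the resulting composite reproduces the classical Stone functors. First, by Proposition~\ref{p:6}, the functor $\ko$ composed with the inclusion $\cperv \hookrightarrow \perv$ realizes $\spec$ as a \emph{full} subcategory of $\perv$ (the inclusion $\cperv \hookrightarrow \perv$ is full by definition, and $\ko$ is an isomorphism onto $\cperv$). Similarly, by Theorem~\ref{cfrith}, the functor $\idlf: \dlat \to \ffrm$, $S \mapsto (\idl(S), S)$, factors through $\cffrm$ and induces an equivalence $\dlat \simeq \cffrm$; since $\cffrm$ is a full subcategory of $\ffrm$, this realizes $\dlat$ as a full subcategory of $\ffrm$.

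Next, I would invoke Theorem~\ref{t:1}, which, assuming the Prime Ideal Theorem, states that ${\bf \Omega} \dashv \ptf$ restricts and co-restricts to a duality between $\cperv$ and $\cffrm$. Transporting this duality along the two full embeddings just described yields a duality between $\spec$ and $\dlat$, presented precisely as a restriction of ${\bf \Omega} \dashv \ptf$ along \emph{full} subcategory embeddings, which is the form of the statement.

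The remaining task is to identify the composite functors with the classical Stone functors. In one direction, a spectral space $(X, \tau)$ is mapped by $\ko$ to $(X, \cK)$ with $\cK$ its lattice of compact opens, then by ${\bf \Omega}$ to $(\tau, \cK)$; under the inverse of $\idlf$ this is identified with $\cK \in \dlat$, the usual Stone functor. In the other direction, a lattice $D$ is identified with $\idlf(D) = (\idl(D), D)$, and $\ptf(\idl(D), D)$ is, by Lemma~\ref{l:8}, isomorphic to $\pff(D) = (\pf(D), \widetilde D)$; applying $\uperv$ yields the topological space of prime filters of $D$ with basis $\widetilde D$, again the classical Stone spectrum.

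The one point that deserves a verification (though essentially contained in~\cite[Proposition~4.6]{borlido21}) is the fullness of $\idlf: \dlat \hookrightarrow \cffrm$: any morphism of Frith frames $(\idl(S), S) \to (\idl(T), T)$ restricts to a lattice map $S \to T$, and, conversely, a lattice map extends uniquely to a frame homomorphism between the ideal completions, automatically carrying $S$ into $T$. Beyond this bookkeeping, there is no real obstacle — all substantive work has already been done in Section~\ref{sec:completion}.
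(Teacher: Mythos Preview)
Your proposal is correct and follows exactly the approach the paper intends: the corollary is stated without proof because it is immediate from Proposition~\ref{p:6}, Theorem~\ref{cfrith} (together with the remarks preceding Lemma~\ref{cspectral} on the full embedding $\dlat \hookrightarrow \ffrm$), and Theorem~\ref{t:1}, which is precisely the decomposition you spell out. Your additional verification that the composite functors recover the classical Stone functors is a helpful elaboration, but the paper treats this as understood.
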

\subsection{Priestley duality}\label{sec:priest}

Another duality for bounded distributive lattices, due to Priestley,
uses the so-called \emph{Priestley spaces} in place of spectral
spaces. A \emph{Priestley space} is a compact topological space
equipped with a partial order relation on its points satisfying the
\emph{Priestley separation axiom}, which states that for points
$x\nleq y$ there is a clopen upper set (``upset" hereon) containing
$x$ and omitting $y$. A morphism of Priestley spaces is a continuous
map which is monotone with respect to the order. We denote by $\pri$
the category of Priestley spaces and corresponding morphisms. It has been shown in \cite{W1975} that the category of Priestley spaces and that of spectral spaces are isomorphic. We have
already seen that $T_0$ complete Pervin spaces form a category
equivalent to spectral spaces, and it is well-known that the latter
form a category equivalent to $\pri$. It is the goal of this section
to explicitly exhibit the correspondence between $T_0$ complete Pervin
spaces and Priestley spaces, thereby providing yet another way of
understanding Priestley duality.

As noticed in~\cite{pin17}, every Pervin space $(X, \cS)$ comes
naturally equipped with a preorder given by
\[
  x\leq_{\cS}y\mb{ if and only if $x\in S$ implies $y\in S$ for all
    $S\in \cS$,}
\]
which is a partial order exactly when $(X, \cS)$ is $T_0$.  In the
case where $(X, \cS)$ is $T_0$ and complete, this is the underlying
partial order of the corresponding Priestley space, and its topology
is the patch topology of the bitopological space $\psk(X, \cS)$.
\begin{lemma}
  If the Prime Ideal Theorem holds, then there is a well-defined
  functor $\pp: \cperv \to \pri$ defined by $\pp(X, \cS) = (X,\,
  \Omega_{\overline{\cS}}(X),\, \leq_\cS)$ on objects, and mapping
  each morphism to the morphism defined by its underlying set
  function.
\end{lemma}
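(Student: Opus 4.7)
The plan is to verify that $\pp(X,\cS)$ is a Priestley space for every $(X,\cS)\in \cperv$, and then to check that the underlying set function of a Pervin morphism is continuous and monotone. The Prime Ideal Theorem is only needed for the compactness step.

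First, I would dispose of the order-theoretic part. Since $(X,\cS)$ is $T_0$, distinct points have distinct neighborhood filters in $\cS$, so $\leq_\cS$ is antisymmetric and hence a partial order. For the Priestley separation axiom, given $x \not\leq_\cS y$, the definition of $\leq_\cS$ yields $S \in \cS$ with $x \in S$ and $y \notin S$; since $\overline{\cS}$ contains both $S$ and $S^c$, the set $S$ is clopen in $(X, \Omega_{\overline{\cS}}(X))$. A direct reading of the definition shows that every element of $\cS$ is an upset for $\leq_\cS$, so $S$ itself is the desired separating clopen upset.

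The main technical point, and the only place the Prime Ideal Theorem enters, is the compactness of $(X, \Omega_{\overline{\cS}}(X))$. By Theorem~\ref{char}, one may assume without loss of generality that $(X, \cS) = \pff(\cS) = (\pf(\cS), \widetilde \cS)$; then $\Omega_{\overline{\cS}}(X)$ is the topology on $\pf(\cS)$ generated by $\widetilde \cS \cup \{\widetilde S^c \mid S \in \cS\}$. The characteristic-function map $F \mapsto \chi_F$ embeds $\pf(\cS)$ into the product space $\{0,1\}^{\cS}$, and its image is cut out by the closed conditions expressing that a subset of $\cS$ is a prime filter. Under PIT (equivalently, Tychonoff for products of compact Hausdorff spaces), this product is compact Hausdorff, so $\pf(\cS)$ with its patch topology is compact as a closed subspace. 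This is the only nontrivial step and the principal obstacle.

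Finally, for functoriality, a morphism $f: (X, \cS) \to (Y, \cT)$ of Pervin spaces satisfies $f^{-1}(\cT) \se \cS$, and therefore $f^{-1}(\overline{\cT}) \se \overline{\cS}$, giving continuity of $f$ from $(X, \Omega_{\overline{\cS}}(X))$ to $(Y, \Omega_{\overline{\cT}}(Y))$. Monotonicity is immediate: if $x \leq_\cS x'$ and $T \in \cT$ contains $f(x)$, then $x \in f^{-1}(T) \in \cS$, hence $x' \in f^{-1}(T)$ and $f(x') \in T$, so $f(x) \leq_\cT f(x')$. Preservation of identities and compositions is formal, completing the verification.
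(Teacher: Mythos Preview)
Your argument is correct, and its overall shape---partial order from $T_0$, Priestley separation via elements of $\cS$ as clopen upsets, compactness, then functoriality---matches the paper's. The one substantive difference is how compactness is obtained.

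The paper proves compactness by observing that the symmetrization $(X,\overline{\cS})$ is again $T_0$ complete, and then applies Corollary~\ref{c:4}: under PIT the compact open subsets of $(X,\Omega_{\overline{\cS}}(X))$ are exactly the elements of $\overline{\cS}$, so in particular $X$ itself is compact. Your route instead reduces via Theorem~\ref{char} to the concrete model $(\pf(\cS),\widetilde{\cS})$ and embeds it as a closed subspace of $\{0,1\}^{\cS}$, invoking Tychonoff for compact Hausdorff spaces (an equivalent of PIT). The paper's approach is shorter in context because it reuses the Frith-frame machinery already built up (Proposition~\ref{pit2} and Corollary~\ref{c:4}); yours is more self-contained and gives a direct, classical Stone-space argument that does not pass through the completeness/coherence characterization of Frith frames. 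Either way, the reliance on PIT is isolated in exactly the same place.
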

\begin{proof}
  If $(X, \cS)$ is $T_0$ complete, then so is its symmetrization $(X,
  \overline{\cS})$ and, by Corollary~\ref{c:4}, $X \in \overline{\cS}$
  is a compact element of $\Omega_{\overline{\cS}}(X)$. Thus, $(X, \,
  \Omega_{\overline{\cS}}(X))$ is compact. Since elements of~$\cS$ are
  clopen upsets of $(X, \, \Omega_{\overline{\cS}}(X),\, \leq_\cS)$,
  the relation~$\leq_\cS$ satisfies the Priestley separation
  axiom. Therefore, $\pp(X, \cS)$ is a Priestley space. It is not hard
  to see that $\pp$ is well-defined on morphisms, too.
\end{proof}

In the other direction, we have the following:

\begin{lemma}
  There is a well-defined functor $\CUP: \pri \to \cperv$ that assigns
  to each Priestley space $(X, \, \tau, \, \leq)$ the Pervin space $X$
  equipped with the lattice of clopen upsets of~$X$, and keeps
  morphisms unchanged.
\end{lemma}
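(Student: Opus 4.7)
The plan is to verify that $\CUP(X, \tau, \leq) = (X, \cD)$ with $\cD := \cl\text{Up}(X)$ the lattice of clopen upsets is a $T_0$ complete Pervin space, and then check that continuous monotone maps yield Pervin morphisms. That $\cD$ is a bounded sublattice of $\cP(X)$ is immediate. The $T_0$ property follows directly from the Priestley separation axiom: if $x \neq y$, then, say, $x \nleq y$, and the separating clopen upset witnesses this in $\cD$.

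For completeness, by Theorem~\ref{char}\ref{char5}, it suffices to exhibit an isomorphism $(X, \cD) \cong \pff(\cD)$ of Pervin spaces. The natural candidate is the neighborhood map $\cN_{(X,\cD)}: x \mapsto \{U \in \cD \mid x \in U\}$, which is always a Pervin morphism satisfying $\cN^{-1}(\widetilde U) = U$ for all $U \in \cD$; it is injective by $T_0$-ness. The real content is surjectivity: given a prime filter $P$ of $\cD$, I need $x \in X$ with $\cN(x) = P$. Consider the family
\[
  \cF := \{U \mid U \in P\} \,\cup\, \{U^c \mid U \in \cD \sm P\},
\]
whose members are all closed in the compact space $(X,\tau)$. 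For any finite subcollection coming from $U_1,\dots,U_n \in P$ and $V_1,\dots,V_m \notin P$, the meet $U_1 \cap \cdots \cap U_n$ lies in $P$ (filter) while $V_1 \cup \cdots \cup V_m$ does not (primality), so $U_1 \cap \cdots \cap U_n \nse V_1 \cup \cdots \cup V_m$, whence $U_1 \cap \cdots \cap U_n \cap V_1^c \cap \cdots \cap V_m^c \neq \emptyset$. Compactness then gives a point $x$ in $\bigcap \cF$, and by construction $U \in \cN(x) \iff U \in P$.

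For morphisms, if $f:(X,\tau_X,\leq_X) \to (Y,\tau_Y,\leq_Y)$ is a Priestley morphism and $V$ is a clopen upset of $Y$, then $f^{-1}(V)$ is clopen by continuity and an upset by monotonicity, so $f^{-1}(\cl\text{Up}(Y)) \se \cl\text{Up}(X)$ and $f$ is a Pervin morphism $\CUP(X) \to \CUP(Y)$. Functoriality is immediate since $\CUP$ leaves set functions unaltered.

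The main obstacle is the surjectivity step: this is essentially a reformulation of the classical Priestley representation theorem, and the finite intersection argument tacitly uses the Prime Ideal Theorem (via compactness of the Priestley space). All other verifications are routine checks that the set-theoretic and topological structures match up as required.
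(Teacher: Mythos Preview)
Your proof is correct, but it takes a different route from the paper's. The paper verifies completeness via the Cauchy-filter characterization (Theorem~\ref{char}\ref{char1}): if $F$ is a Cauchy filter on $(X,\cD)$, then $F \cap \overline{\cD}$ is a family of closed subsets of the compact space $(X,\tau)$ with the finite intersection property, so $\bigcap(F \cap \overline{\cD}) \neq \emptyset$, and Lemma~\ref{l:7} gives convergence. Your argument instead realizes $(X,\cD)$ explicitly as $\pff(\cD)$ by proving surjectivity of the neighborhood map, which amounts to reproving the point-recovery half of Priestley representation. Both approaches hinge on the same compactness-plus-FIP step; the paper's is shorter because it avoids singling out prime filters, while yours makes the connection to $\pff$ (and hence to Theorem~\ref{char}\ref{char4}) more transparent.

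One correction: your closing remark that the argument ``tacitly uses the Prime Ideal Theorem (via compactness of the Priestley space)'' is mistaken. Compactness of $(X,\tau)$ is part of the \emph{hypothesis} that $(X,\tau,\leq)$ is a Priestley space, so the finite-intersection step is a direct application of that assumption and requires no choice principle. Indeed, the paper states this lemma without any PIT caveat, in contrast to the companion lemma for~$\pp$, which does need it.
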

\begin{proof}
  It is easy to verify that $\CUP$ is well-defined on morphisms. Let
  us argue that $\CUP$ is well-defined on objects. Fix a Priestley
  space $(X, \tau, \leq)$. By the Priestley separation axiom, we have
  that $(X, \cS) := \CUP(X,\tau, \leq)$ is $T_0$. To show that $(X,
  \cS)$ is complete, we show that every Cauchy filter
  converges. Indeed, if $F \subseteq \cP(X)$ is a Cauchy filter then,
  since it is proper, it has the finite intersection property.  Thus,
  $F \cap \overline{\cS}$ is a family of closed subsets of~$X$ with
  the finite intersection property. Since Priestley spaces are
  compact, it follows that $\bigcap (F \cap \overline{\cS})$ is
  nonempty and, by Lemma~\ref{l:7}, $F$ converges.
\end{proof}

We leave it for the reader to verify that the functors $\pp$ and
$\CUP$ are mutually inverse. The reader may also check that, by
composing the functors
\[\pri \xrightarrow{\;\,\CUP\,\;} \cperv \xrightarrow{\,\uperv\,} \spec \qquad
  \text{ and }\qquad \spec \xrightarrow{\quad\pp\quad} \cperv
  \xrightarrow{\;\;\;\ko\;\;\;} \pri\]
one obtains the well-known isomorphism between the categories of
spectral and of Priestley spaces.
\begin{proposition}\label{p:7}
  If the Prime Ideal Theorem holds, then the functors $\pp$ and $\CUP$
  establish an isomorphism of categories between $\pri$ and $\cperv$.
\end{proposition}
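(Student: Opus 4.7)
The plan is to verify directly that $\CUP \circ \pp = \text{id}_{\cperv}$ and $\pp \circ \CUP = \text{id}_{\pri}$. Since both functors act as the identity on underlying set functions, the verification on morphisms is automatic, so it suffices to check these identities on objects.

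For $\CUP \circ \pp = \text{id}_{\cperv}$, take $(X,\cS) \in \cperv$ and observe that one needs the lattice of clopen upsets of the Priestley space $\pp(X,\cS) = (X,\,\Omega_{\overline{\cS}}(X),\,\leq_\cS)$ to coincide with $\cS$. The inclusion $\cS \subseteq \{$clopen upsets$\}$ was already established in the proof that $\pp$ is well-defined. For the reverse inclusion, let $U$ be a clopen upset. For each pair $x \in U$ and $y \notin U$, the fact that $U$ is a $\leq_\cS$-upset yields some $S_{xy} \in \cS$ with $x \in S_{xy}$ and $y \notin S_{xy}$. Fixing $x$, the complement $U^c$ is closed and hence compact in $(X,\Omega_{\overline{\cS}}(X))$; covering it by the sets $S_{xy}^c$ and extracting a finite subcover produces an element $S_{xy_1} \cap \cdots \cap S_{xy_n} \in \cS$ containing $x$ and contained in $U$. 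Hence $U$ is open in $\Omega_\cS(X)$. Being closed in the compact space $(X,\Omega_{\overline{\cS}}(X))$, the set $U$ is also compact, and remains compact in the coarser topology $\Omega_\cS(X)$. Corollary~\ref{c:4} (whose proof invokes the Prime Ideal Theorem) then identifies the compact open subsets of $(X,\Omega_\cS(X))$ precisely with $\cS$, so $U \in \cS$.

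For $\pp \circ \CUP = \text{id}_{\pri}$, take $(X,\tau,\leq) \in \pri$ and let $\cS$ be the lattice of clopen upsets, so that $\pp \circ \CUP(X,\tau,\leq) = (X,\Omega_{\overline{\cS}}(X),\leq_\cS)$. I have to establish the two equalities $\leq_\cS\,=\,\leq$ and $\Omega_{\overline{\cS}}(X) = \tau$. The order equality is immediate from Priestley separation: elements of $\cS$ are upsets, giving $\leq\,\subseteq\,\leq_\cS$, while if $x \not\leq y$ then Priestley separation produces some $S \in \cS$ with $x \in S$ and $y \notin S$, witnessing $x \not\leq_\cS y$. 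For the topological equality, the inclusion $\Omega_{\overline{\cS}}(X) \subseteq \tau$ holds because $\cS \cup \cS^c \subseteq \tau$. For the reverse, given $V \in \tau$ and $x \in V$, one uses that $V^c$ is closed, hence compact in $(X,\tau)$, together with the fact that in a Priestley space any two distinct points are separated by a clopen upset or its complement (this uses Hausdorffness of the Priestley topology, a standard consequence of Priestley separation), to produce for each $y \in V^c$ a clopen $W_y \in \overline{\cS}$ with $x \in W_y$ and $y \notin W_y$; extracting a finite subcover yields a finite intersection that lies in $\Omega_{\overline{\cS}}(X)$, contains $x$, and is contained in $V$.

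The main technical step is the first direction, specifically showing that clopen upsets are open in the coarser topology $\Omega_\cS(X)$, since this is where the Prime Ideal Theorem enters (via Corollary~\ref{c:4}). The second direction relies only on classical features of Priestley spaces, namely that clopen upsets and their complements form a subbase for the topology. Together they give the claimed isomorphism $\pri \cong \cperv$, and as noted in the paragraph preceding the statement, composing this isomorphism with Proposition~\ref{p:6} recovers the classical isomorphism between $\pri$ and $\spec$.
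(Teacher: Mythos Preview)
Your proof is correct and follows exactly the route the paper intends: the paper simply states ``We leave it for the reader to verify that the functors $\pp$ and $\CUP$ are mutually inverse,'' and your argument supplies precisely that verification, correctly invoking Corollary~\ref{c:4} (and hence the Prime Ideal Theorem) to identify the clopen upsets of $\pp(X,\cS)$ with~$\cS$.
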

\begin{corollary}
  If the Prime Ideal Theorem holds, then Priestley duality may be seen
  as a restriction along full subcategory embeddings of the dual
  adjunction ${\bf \Omega}: \perv \lra \ffrm:\ptf$.
\end{corollary}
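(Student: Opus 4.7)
The plan is to assemble the corollary directly from results already established in the paper. Priestley duality asserts a dual equivalence between $\pri$ and $\dlat$, so we need to exhibit full subcategory embeddings $\pri \hookrightarrow \perv$ and $\dlat \hookrightarrow \ffrm$ (the latter seen as living in $\ffrm^{op}$ after passing through the dual adjunction), together with the fact that the restriction of ${\bf \Omega} \dashv \ptf$ along these embeddings realises Priestley duality.

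First, I would use Proposition~\ref{p:7} to identify $\pri$ with $\cperv$ via the mutually inverse functors $\CUP$ and $\pp$. Composing $\CUP$ with the inclusion $\cperv \hookrightarrow \perv$ yields a full embedding $\pri \hookrightarrow \perv$: fullness is immediate because morphisms in $\cperv$ are, by definition as a full subcategory of $\perv$, arbitrary Pervin morphisms, and morphisms in $\pri$ are in bijection with morphisms in $\cperv$ via the category isomorphism. On the pointfree side, Theorem~\ref{cfrith} gives that complete Frith frames are precisely those of the form $(\idl(D),D)$ for $D\in\dlat$, and the assignment $D\mapsto(\idl(D),D)$ is the object part of the functor $\idlf:\dlat\to\cffrm$, which is an equivalence of categories (since morphisms of Frith frames $(\idl(D),D)\to(\idl(D'),D')$ are determined by their restrictions $D\to D'$). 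Composing with $\cffrm \hookrightarrow \ffrm$ gives a full embedding $\dlat \hookrightarrow \ffrm$.

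Next, I would invoke Theorem~\ref{t:1}: assuming the Prime Ideal Theorem, the dual adjunction ${\bf \Omega}: \perv \lra \ffrm: \ptf$ restricts and co-restricts to a duality $\cperv \simeq \cffrm^{op}$. Transporting this duality along the category isomorphisms $\pri \cong \cperv$ and $\dlat \cong \cffrm$ identifies it with the classical Priestley duality $\pri \simeq \dlat^{op}$. Thus, up to these identifications (which are themselves realised by full subcategory embeddings into $\perv$ and $\ffrm$), Priestley duality is exactly the restriction of ${\bf \Omega} \dashv \ptf$.

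There is no real obstacle here, as every ingredient is already in place; the only care needed is to check that the embeddings are genuinely \emph{full}, and this is automatic because $\cperv$ and $\cffrm$ were introduced as full subcategories of $\perv$ and $\ffrm$ respectively, and the isomorphisms $\pri \cong \cperv$ and $\dlat \cong \cffrm$ preserve all morphisms. This is precisely the improvement over the classical formulation, where the standard embeddings $\pri \hookrightarrow \Top$ and $\dlat \hookrightarrow \Frm$ (via $\idl$) fail to be full, since continuous maps between Priestley spaces need not be monotone and frame homomorphisms between coherent frames need not preserve compactness.
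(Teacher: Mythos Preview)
Your proposal is correct and matches the paper's intended argument: the corollary is stated without proof in the paper, being an immediate consequence of Proposition~\ref{p:7} (identifying $\pri$ with $\cperv$), Theorem~\ref{cfrith} (identifying $\dlat$ with $\cffrm$), and Theorem~\ref{t:1} (the restricted duality), together with the fact that $\cperv$ and $\cffrm$ are by construction full subcategories of $\perv$ and $\ffrm$. Your explicit verification of fullness and the transport along the category isomorphisms is exactly the unwinding the paper leaves to the reader.
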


\subsection{Bitopological duality}\label{subsec:bitopdua}

It has long been known~\cite{Banaschewski89} that the dual adjunction
between bitopological spaces and biframes restricts and co-restricts
to a duality between $T_0$, compact and zero-dimensional bitopological
spaces (denoted $\kzbitop$) and compact and zero-dimensional biframes
(denoted $\kzbifrm$). A few years later, Priestley duality was
considered from a bitopological point of view~\cite{Picado94}, with
Priestley spaces being identified with $T_0$, compact and
zero-dimensional bitopological spaces, and lattices being identified
with compact and zero-dimensional biframes. The point-set half of this
correspondence was then rediscovered in~\cite{bezhanishvili10}, where
$T_0$ compact zero-dimensional bitopological spaces were named
\emph{pairwise Stone spaces}.\footnote{Note that pairwise Stone spaces
  are the same as $T_0$, compact and zero-dimensional bitopological
  spaces only under the assumption of the Prime Ideal Theorem.} It is
then clear that $\cperv$ and $\cffrm$ are equivalent to
$\kzbitop$ and $\kzbifrm$, respectively.  In this section, we will
make these equivalences explicit, using the adjunctions derived in
Section~\ref{sec:se}.

Let us start with the equivalence between $T_0$ complete Pervin spaces
and $T_0$, compact, and zero-dimensional bitopological spaces. Recall
that we have an idempotent adjunction $\clf_+\dashv \psk$ whose
fixpoints are, respectively, the zero-dimensional bitopological spaces
and the strongly exact Pervin spaces. To conclude that this adjunction
further restricts to an equivalence between $\kzbitop$ and $\cperv$,
it suffices to show that $T_0$ complete Pervin spaces are strongly
exact (cf. Corollary~\ref{c:7}) and that the equality $\psk[\cperv] =
\kzbitop$ holds (cf. Lemmas~\ref{bipit} and~\ref{kc}). For that, we
will need to assume the Prime Ideal Theorem.

\begin{lemma}\label{bipit}
  The following are equivalent.
  \begin{enumerate}
  \item The Prime Ideal Theorem holds.
  \item\label{item:1} For a $T_0$ complete Pervin space $(X,\cS)$, the
    bispace $\psk(X,\cS)$ is compact.
  \item\label{item:14} For a $T_0$ symmetric complete Pervin space
    $(X,\cB)$, the bispace $\psk(X,\cB)$ is compact.
  \end{enumerate}
\end{lemma}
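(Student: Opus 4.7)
The plan is to prove the cycle $(1) \Rightarrow (2) \Rightarrow (3) \Rightarrow (1)$. The implication $(2) \Rightarrow (3)$ is immediate, since every $T_0$ complete symmetric Pervin space is in particular a $T_0$ complete Pervin space.

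For $(1) \Rightarrow (2)$, I would fix a $T_0$ complete Pervin space $(X,\cS)$ and first note that the patch topology of $\psk(X,\cS)$ equals $\Omega_{\overline{\cS}}(X)$. The symmetrization $(X,\overline{\cS})$ is again $T_0$ and complete (the former because $\overline{\cS}$ refines $\cS$, the latter by Definition~\ref{sec:complete}). Assuming the Prime Ideal Theorem, Proposition~\ref{pit2} then gives that the Frith frame ${\bf\Omega}(X,\overline{\cS}) = (\Omega_{\overline{\cS}}(X),\overline{\cS})$ is complete, and Theorem~\ref{cfrith} identifies this with the coherent frame $\idl(\overline{\cS})$. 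Since $X \in \overline{\cS}$, the top of this frame is compact; that is precisely the statement that $(X,\Omega_{\overline{\cS}}(X))$ is compact, so $\psk(X,\cS)$ is compact.

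For $(3) \Rightarrow (1)$, I would use the ultrafilter form of the Prime Ideal Theorem recorded in Theorem~\ref{pit}. Let $X$ be a set, $F \subseteq \cP(X)$ a proper filter, and set $\cB := \cP(X)$. By Theorem~\ref{char}, the Pervin space $\pff(\cB) = (\pf(\cB),\widetilde{\cB})$ is $T_0$ and complete, and it is symmetric because $\Phi_{\cB}$ is a lattice homomorphism from a Boolean algebra, hence $\widetilde{\cB}$ is itself a Boolean algebra. By hypothesis (3), the bispace $\psk(\pf(\cB),\widetilde{\cB})$ is compact; since $\widetilde{\cB}$ is already closed under complements, its patch topology is just $\Omega_{\widetilde{\cB}}(\pf(\cB))$, in which every basic open $\widetilde{S}$ is clopen. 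Exactly as in the proof of Proposition~\ref{pit2}, the family $\{\widetilde{S} \mid S \in F\}$ consists of nonempty closed sets and has the finite intersection property because $F$ is a proper filter closed under finite meets. Compactness then yields a point in $\bigcap_{S \in F} \widetilde{S}$, which is an ultrafilter on $X$ extending $F$.

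The main obstacle is not any single step but the bookkeeping around symmetrization: one must check that being $T_0$, complete, and (in the proof of $(3) \Rightarrow (1)$) Boolean are all preserved by the relevant constructions, so that Theorem~\ref{char}, Proposition~\ref{pit2}, and Theorem~\ref{cfrith} can be invoked without friction. Once this is in place, the argument proceeds by recognizing the patch topology as $\Omega_{\overline{\cS}}(X)$ and translating its compactness into compactness of the top element of a coherent frame on one side, and into the finite intersection property giving ultrafilter extensions on the other.
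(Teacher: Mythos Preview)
Your proof is correct and uses the same ingredients as the paper's: identification of the patch topology with $\Omega_{\overline{\cS}}(X)$, reduction to the symmetric case, Proposition~\ref{pit2}, and the finite-intersection-property argument on $\{\widetilde S \mid S\in F\}$ to extract an ultrafilter. The organization differs slightly: the paper first shows \ref{item:1}$\iff$\ref{item:14} directly (completeness and patch-compactness are both invariant under symmetrization), and then proves the single biconditional ``$\psk(X,\cB)$ is compact $\iff$ ${\bf\Omega}(X,\cB)$ is coherent'' for $T_0$ symmetric $(X,\cB)$, so that the equivalence with the Prime Ideal Theorem follows at once from Proposition~\ref{pit2}. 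Your $(3)\Rightarrow(1)$ instead reruns the finite-intersection argument already given in the proof of Proposition~\ref{pit2}; this is perfectly valid but redundant, whereas the paper's factorization through coherence lets it cite Proposition~\ref{pit2} once and be done.
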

\begin{proof}
  We first argue that~\ref{item:1} and~\ref{item:14} are
  equivalent. Clearly, \ref{item:1} implies~\ref{item:14}. For the
  converse, we only need to remind the reader that a Pervin space $(X,
  \cS)$ is complete if and only if so is its symmetrization $(X,
  \overline{\cS})$ and observe that, by definition of compact bispace,
  $\psk(X, \cS)$ is compact if and only if so is $\psk(X,
  \overline{\cS})$.

  Now, taking the equivalence between statements~\ref{pit2a}
  and~\ref{item:3} of Proposition~\ref{pit2} into account, it suffices
  to show that, for every $T_0$ symmetric Pervin space $(X, \cB)$, the
  bispace $\psk(X, \cB)$ is compact if and only if the Frith frame
  ${\bf \Omega}(X, \cB)$ is coherent (recall that, by
  Theorem~\ref{cfrith}, a Frith frame is coherent if and only if it is
  complete).
  If $\psk(X, \cB)$ is compact, that is, if the topological space $(X,
  \Omega_{\cB}(X))$ is compact, then every element of $\cB$ is compact
  in the frame $\Omega_{\cB}(X)$. Indeed, if $B = \bigcup \cU$ for
  some family $\cU \subseteq \Omega_{\cB}(X)$, then $\cU \cup \{B^c\}$
  is an open cover of $(X, \Omega_{\cB}(X))$ and, if the latter is
  compact, then $\cU \cup \{B^c\}$ has a finite subcover $\cU'$. This
  yields $B = \bigcup \cU' \setminus \{B^c\}$, with $\cU' \setminus
  \{B^c\}$ finite, thereby showing compactness of~$B$. Thus, the Frith
  frame ${\bf \Omega}(X, \cB) = (\Omega_\cB(X), \cB)$ is coherent.
  Conversely, suppose that ${\bf \Omega}(X, \cB)$ is coherent. In
  particular, its underlying frame $\Omega_\cB(X)$ is compact. But
  this is precisely the frame of opens of the patch topology of
  $\psk(X, \cB)$. Thus, the latter is compact as well.
\end{proof}

As a consequence, we immediately obtain that, under the Prime Ideal
Theorem, every $T_0$ complete Pervin space is strongly exact. We do
not know whether this is a necessary hypothesis.

\begin{corollary}\label{c:7}
  If the Prime Ideal Theorem holds, then $T_0$ complete Pervin spaces
  are strongly exact.
\end{corollary}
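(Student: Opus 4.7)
The plan is to reduce the claim to the characterization of $\cS$ as the collection of compact open subsets of $(X, \Omega_{\cS}(X))$ provided by Corollary~\ref{c:4}, and then to exploit the patch-compactness of $\psk(X, \cS)$ guaranteed by Lemma~\ref{bipit}.

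Fix a $T_0$ complete Pervin space $(X, \cS)$. The inclusion $\cS \subseteq \sei{\cS}$ is immediate from the definition, so I would concentrate on the reverse inclusion. Let $U \in \sei{\cS}$, so that $U = \bigcap \cU$ for some family $\cU \subseteq \cS$ with $U$ open in $(X, \Omega_{\cS}(X))$. Under the Prime Ideal Theorem, Corollary~\ref{c:4} identifies $\cS$ with the set of compact open subsets of $(X, \Omega_{\cS}(X))$. It therefore suffices to show that $U$ is compact for the topology $\Omega_{\cS}(X)$.

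To establish this, I would observe that every $S \in \cS$ is clopen in the patch topology $\Omega_{\cS}(X) \vee \Omega_{\cS^{c}}(X)$ of $\psk(X, \cS)$, since $S$ is positively open and $S^c$ is negatively open. Hence $U = \bigcap \cU$ is an intersection of patch-clopens and is therefore patch-closed. By Lemma~\ref{bipit}, the patch topology of $\psk(X, \cS)$ is compact, so $U$, being a closed subset of a compact space, is compact for the patch topology. Since $\Omega_{\cS}(X)$ is a subtopology of the patch topology, any cover of $U$ by elements of $\Omega_{\cS}(X)$ is in particular a patch-open cover and thus admits a finite subcover, whence $U$ is compact in $(X, \Omega_{\cS}(X))$. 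This yields $U \in \cS$ and concludes the argument.

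The main obstacle is essentially conceptual rather than technical: one must recognize that the two nontrivial inputs, namely the patch-compactness of $\psk(X, \cS)$ (Lemma~\ref{bipit}) and the identification of $\cS$ with the compact opens of $(X, \Omega_{\cS}(X))$ (Corollary~\ref{c:4}), are precisely the two ingredients powered by the Prime Ideal Theorem that make everything fit together; once these are in place, the rest is the standard fact that compactness in a finer topology implies compactness in any coarser one.
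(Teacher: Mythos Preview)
Your proof is correct, but it takes a somewhat different route from the paper's. Both arguments invoke Lemma~\ref{bipit} to obtain compactness of the patch topology of $\psk(X,\cS)$; the divergence is in how membership in~$\cS$ is then established. You bring in Corollary~\ref{c:4} to identify $\cS$ with the compact opens of $(X,\Omega_\cS(X))$ and then argue that a patch-closed subset of a compact space is compact, hence compact for the coarser topology~$\Omega_\cS(X)$. The paper instead avoids Corollary~\ref{c:4} altogether: writing the open intersection as $\bigcap_i S_i = \bigcup_j S'_j$ with $S'_j \in \cS$, it complements to get a patch-open cover $X = \bigcup_i S_i^c \cup \bigcup_j S'_j$, extracts a finite subcover, and concludes that $\bigcap_i S_i$ equals a \emph{finite} union $\bigcup_{j \in J'} S'_j \in \cS$. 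Your version is arguably more conceptual (it explains \emph{why} the set lands in $\cS$ via the compact-open description), while the paper's is more economical, relying on a single PIT-powered ingredient rather than two.
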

\begin{proof}
  Let $(X, \cS)$ be a Pervin space.  By Lemma~\ref{bipit}, we only
  need to show that if $\psk(X, \cS)$ is compact then $(X, \cS)$ is
  strongly exact. Let $\bigcap_{i \in I}S_i$ be an open intersection
  of~$\cS$, say $\bigcap_{i \in I}S_i = \bigcup_{j \in J}S_j'$, for
  some $\{S_j\}_{j \in J} \subseteq \cS$. Then, we have
  \[X = (\bigcap_{i \in I}S_i)^c \cup (\bigcap_{i \in I}S_i) =
    \bigcup_{i \in I}S_i^c \cup \bigcup_{j \in J}S_j'\]
  and, by compactness of $\psk(X, \cS)$, it follows that there exists
  a finite subset $J' \subseteq J$ such that $X = \bigcup_{i \in
    I}S_i^c \cup \bigcup_{j \in J'}S_j'$. But then, we have
  $\bigcap_{i \in I} S_i = \bigcup_{j \in J'}S_j'$, and thus, the
  intersection $\bigcap_{i \in I} S_i$ belongs to $\cS$, as required.
\end{proof}

It only remains to show the inclusion $\psk[\cperv] \supseteq
\kzbitop$.

\begin{lemma}\label{kc}
  If $\cX = (X, \tau_+, \tau_-)$ is a compact bitopological space, then the Pervin space $\clf_+(\cX)$ is complete. In
  particular, every $T_0$, compact, and zero-dimensional bitopological
  space is of the form $\psk(X, \cS)$, for some $T_0$ complete Pervin
  space $(X, \cS)$.
\end{lemma}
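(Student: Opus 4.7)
The plan is to prove Cauchy completeness of $(X, \cl_+(\cX))$ directly by a finite-intersection-property argument, and to extract completeness from this. Given a Cauchy filter $F \subseteq \cP(X)$ for $(X, \cl_+(\cX))$, I would consider the family $\mathcal{G} := F \cap \overline{\cl_+(\cX)}$. Since $\overline{\cl_+(\cX)}$ is the Boolean subalgebra of $\cP(X)$ generated by the positive clopens, each of its elements is a Boolean combination of positive clopens and is therefore clopen in the patch topology $\tau_+ \vee \tau_-$. As $F$ is a proper filter, $\mathcal{G}$ has the finite intersection property, so compactness of $\cX$ produces a point $x \in \bigcap \mathcal{G}$, and Lemma~\ref{l:7} then witnesses convergence of $F$ to $x$. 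Hence $(X, \cl_+(\cX))$ is Cauchy complete.

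To pass from Cauchy completeness to completeness, I would repeat the argument of Theorem~\ref{char}\ref{char1}$\Rightarrow$\ref{char2}: given a dense extremal monomorphism $m: (X, \overline{\cl_+(\cX)}) \hookrightarrow (Y, \cC)$ into a $T_0$ symmetric Pervin space, for each $y \in Y$ the filter $F_y := \up\{C \in \cC \mid y \in C\}$ is Cauchy, hence $m^{-1}(F_y)$ is Cauchy in the symmetrization of $(X, \cl_+(\cX))$ by Lemma~\ref{tech1} and converges to some $x \in X$, at which point $T_0$-ness of $(Y, \cC)$ forces $y = m(x)$. This step only uses $T_0$-ness of the target, so the $T_0$ hypothesis on the source in the statement of Theorem~\ref{char} is not needed in this direction.

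For the ``in particular'' assertion, I would take $\cX$ to be $T_0$, compact, and zero-dimensional, and set $(X, \cS) := \clf_+(\cX)$. Completeness of $(X, \cS)$ then follows from the first half, while Proposition~\ref{biadjp} guarantees that $\psk(X, \cS) = \cX$, since zero-dimensional bispaces are the fixpoints of $\clf_+ \dashv \psk$. To verify that $(X, \cS)$ is $T_0$, take distinct $x, y \in X$: $T_0$-ness of the patch topology yields a basic open $V_+ \cap V_-$ separating them with $V_\pm \in \tau_\pm$, so already $V_+$ or $V_-$ separates them; zero-dimensionality then provides a clopen $C$ (positive or negative) that separates them, and if $C$ is negative, $C^c \in \cl_+(\cX)$ separates them. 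Either way $\cl_+(\cX)$ separates the points of $X$, so $(X, \Omega_\cS(X))$ is $T_0$.

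The main subtlety is making the definitions dovetail: one has to check that elements of the Boolean closure $\overline{\cl_+(\cX)}$ are actually patch-clopen (so that the FIP argument feeds correctly into patch-compactness), and that the implication from Cauchy completeness to completeness invoked above truly does not need the $T_0$ hypothesis on the source Pervin space that appears in Theorem~\ref{char}.
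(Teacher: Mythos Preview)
Your proposal is correct and follows essentially the same route as the paper: establish Cauchy completeness by applying the finite intersection property to $F \cap \overline{\cS}$ (a family of patch-closed sets), invoke Lemma~\ref{l:7} for convergence, and then handle the ``in particular'' clause via Proposition~\ref{biadjp} together with a $T_0$ check. The paper simply writes ``$(X,\cS)$ is complete'' after showing Cauchy completeness, relying on the convention adopted after Theorem~\ref{char}; your extra paragraph verifying that Cauchy completeness implies completeness in the sense of Definition~\ref{sec:complete} without assuming $T_0$ on the source is a valid refinement, and your observation that the implication \ref{char1}$\Rightarrow$\ref{char2} in Theorem~\ref{char} only uses $T_0$-ness of the target is correct.
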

\begin{proof}
  We let $\tau$ denote the patch topology of $\cX$, and we suppose
  that $\cX$ is compact, that is, that the space $(X, \tau)$ is
  compact. We let $(X, \cS) = \clf_+(\cX)$ and $F \subseteq \cP(X)$ be
  a Cauchy filter. Since $F$ is proper, it has the finite intersection
  property, and therefore, so does $F \cap \overline{\cS}$ (which is a
  subset of $\cl(X, \tau)$). By compactness of $(X, \tau)$ it follows
  that the intersection $\bigcap (F\cap \overline{\cS})$ is
  nonempty. Thus, by Lemma~\ref{l:7}, $F$ converges and $(X, \cS)$ is
  complete.

  Finally, let $\cX = (X, \tau_+, \tau_-)$ be a bitopological
  space. By Proposition~\ref{biadjp}, if $\cX$ is zero-dimensional,
  then it is isomorphic to $\psk \circ \clf_+(\cX)$. If furthermore
  $\cX$ is compact then, by the first part of the claim, $\clf_+(\cX)$
  is complete. It is also easy to see that $\clf_+(\cX)$ is $T_0$
  provided~$\cX$ is $T_0$ and zero-dimensional. Thus, if $\cX$ is
  $T_0$, compact, and zero-dimensional, then $(X, \cS) = \clf_+(\cX)$
  is a $T_0$ complete Pervin space satisfying $\cX \cong \psk(X,
  \cS)$.
\end{proof}

As already explained, we may thus derive the following:
\begin{corollary}\label{c:6}
  If the Prime Ideal Theorem holds, the adjunction $\clf_+\dashv \psk$
  restricts to an equivalence $\kzbitop \cong \cperv$.
\end{corollary}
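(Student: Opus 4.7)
The plan is to leverage the idempotent adjunction $\clf_+ \dashv \psk$ of Lemma~\ref{biadj}, whose fixpoints are, by Proposition~\ref{biadjp} and Corollary~\ref{pointsetworks}, the zero-dimensional bispaces and the strongly exact Pervin spaces, yielding the ambient equivalence $\bitop_Z \cong \perv_{se}$. To obtain the restricted equivalence $\kzbitop \cong \cperv$, I would verify that under the Prime Ideal Theorem the two functors descend to the appropriate full subcategories; the unit and counit then remain isomorphisms by the ambient idempotency.

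On the Pervin side, I would first invoke Corollary~\ref{c:7} to conclude that $\cperv\subseteq\perv_{se}$, so every $(X,\cS)\in\cperv$ is a fixpoint of the ambient adjunction. Then for such $(X,\cS)$: Proposition~\ref{biadjp} gives zero-dimensionality of $\psk(X,\cS)$; Lemma~\ref{bipit} gives compactness; and the $T_0$ condition transfers because the patch topology of $\psk(X,\cS)$ is $\Omega_{\overline{\cS}}(X)$, and the lattice $\overline{\cS}$ separates exactly the same pairs of points as $\cS$ does, so $\psk(X,\cS)$ is $T_0$ as a bispace precisely when $(X,\cS)$ is $T_0$ as a Pervin space. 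Hence $\psk$ co-restricts to a functor $\cperv \to \kzbitop$.

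On the bispace side, I would invoke Lemma~\ref{kc} directly: a compact, $T_0$, zero-dimensional bispace $\cX$ gives rise to a $T_0$ complete Pervin space $\clf_+(\cX)$, so $\clf_+$ co-restricts to a functor $\kzbitop \to \cperv$. Combining both steps, the unit $\eta_\cX$ of $\clf_+ \dashv \psk$ is an isomorphism for every $\cX\in\kzbitop\subseteq\bitop_Z$, and the counit $\varepsilon_{(X,\cS)}$ is an isomorphism for every $(X,\cS)\in\cperv\subseteq\perv_{se}$, so the restricted adjunction is an equivalence.

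I do not foresee any substantive obstacle: all the non-trivial content is already packaged in Corollary~\ref{c:7} and Lemmas~\ref{bipit} and~\ref{kc} (each of which uses the Prime Ideal Theorem at the appropriate place). The remaining work is formal bookkeeping — checking that the $T_0$ condition corresponds on both sides and that both functors land in the smaller subcategories — after which the equivalence follows immediately from the restriction of the idempotent adjunction.
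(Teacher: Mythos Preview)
Your proposal is correct and follows essentially the same route as the paper: the paper also argues that the idempotent adjunction $\clf_+\dashv\psk$ restricts by first invoking Corollary~\ref{c:7} to place $\cperv$ among the fixpoints, and then using Lemmas~\ref{bipit} and~\ref{kc} to establish $\psk[\cperv]=\kzbitop$. Your version is slightly more explicit in checking separately that each functor co-restricts (including the $T_0$ bookkeeping), but the content is the same.
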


Let us now consider the pointfree setting. We have seen in
Section~\ref{sec:se} the existence of an idempotent adjunction
$\fsk:\ffrm \lra \bifrm:\bbf_+$. Although we were not able to describe
the underlying categorical equivalence, we have shown that the
fixpoints of $\ffrm$ are the strongly exact Frith frames and that
every compact zero-dimensional biframe is a fixpoint of
$\bifrm$. Thus, we will follow the same strategy as for Pervin spaces
and show that complete Frith frames are strongly exact
(cf. Lemma~\ref{l:2}) and the equality $\fsk[\cffrm] = \kzbifrm$ holds
(cf. Lemmas~\ref{fskcompact} and~\ref{bbcomplete}).

Before proceeding, we need to state a technical result. Recall that a
filter $F \subseteq L$ on a frame~$L$ is \emph{Scott-open} if whenever
$D \subseteq F$ is a directed subset whose join belongs to $F$, the
intersection $D \cap F$ is nonempty.

\begin{proposition}[{\cite{Johnstone85, moshier20}}]\label{p:5}
  Scott-open filters are closed under strongly exact meets.
\end{proposition}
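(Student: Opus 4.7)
The plan is to reduce to a directed situation and then invoke the pointfree Hofmann--Mislove correspondence between Scott-open filters and a special class of sublocales.

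Let $F$ be a Scott-open filter on $L$, $P \se F$, and suppose $a = \bigwedge P$ is strongly exact, so by definition $\Delta_a = \bigvee_{s \in P} \Delta_s$ in $\cC L$; the aim is to conclude $a \in F$. First I would replace $P$ by its closure $P'$ under finite meets. This preserves all the data: $P' \se F$ because $F$ is closed under finite meets, $\bigwedge P' = \bigwedge P = a$, and the join $\bigvee_{s \in P'} \Delta_s$ coincides with $\bigvee_{s \in P} \Delta_s$ thanks to the identity $\Delta_s \ve \Delta_t = \Delta_{s \we t}$ in $\cC L$. Hence we may assume the family $\{\Delta_s : s \in P\}$ is upward directed in $\cC L$.

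The main step is to dualize to sublocales. Under the order-reversing correspondence between $\cC L$ and the sublocale lattice of $L$, the equation $\Delta_a = \bigvee_{s \in P}\Delta_s$ translates into the sublocale identity $\mathfrak{o}(a) = \bigcap_{s \in P} \mathfrak{o}(s)$, where $\mathfrak{o}(\cdot)$ denotes the open sublocale. Now I would invoke the pointfree Hofmann--Mislove theorem (due to Johnstone), which assigns to each Scott-open filter $F$ a compact fitted sublocale $K_F$ of $L$ characterized by the equivalence
\[
  b \in F \iff K_F \se \mathfrak{o}(b), \qquad b \in L.
\]
From $P \se F$ we obtain $K_F \se \mathfrak{o}(s)$ for every $s \in P$, whence $K_F \se \bigcap_{s \in P}\mathfrak{o}(s) = \mathfrak{o}(a)$ by the sublocale translation above, and applying the equivalence again yields $a \in F$, as desired.

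The main obstacle is that the pointfree Hofmann--Mislove correspondence does real work here; it is not the classical spatial version but its frame-theoretic analogue, which requires care to set up for arbitrary (not necessarily spatial) frames. An alternative, more hands-on route is to analyze the join $\bigvee_{s \in P} \Delta_s$ in $\cC L$ by a transfinite congruence-closure argument and use the Scott-openness of $F$ to keep track of membership at each stage; this avoids the Hofmann--Mislove machinery but is calculationally heavier. Either way, the essential interplay is between the directedness of $\{\Delta_s\}_{s \in P}$, the equation $\Delta_a = \bigvee_{s \in P}\Delta_s$, and the Scott-openness of~$F$.
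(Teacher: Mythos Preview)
Your argument is correct and essentially coincides with the paper's: the pointfree Hofmann--Mislove characterization $b \in F \iff K_F \subseteq \mathfrak{o}(b)$ is precisely~\cite[Lemma~3.4]{Johnstone85} rephrased in sublocale language (the paper quotes it in congruence form as $F = \{x \in L : \Delta_x \subseteq \bigvee_{a \in P}\Delta_a\}$), and your final one-line deduction is the direct argument for what the paper outsources to~\cite[Theorem~4.5]{moshier20}. Your preliminary reduction to a directed family is never used in the Hofmann--Mislove route you then follow, so it can be dropped.
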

\begin{proof}
  It is shown in~\cite[Lemma~3.4]{Johnstone85} that every Scott-open
  filter $F \subseteq L$ is of the form $\{x \in L \mid \Delta_x
  \subseteq \bigvee_{a \in P}\Delta_a\}$, for some subset $P \subseteq
  L$. In turn, filters of this form are shown
  in~\cite[Theorem~4.5]{moshier20} to be closed under strongly exact
  meets.
\end{proof}

\begin{remark}
  In the proof of Proposition~\ref{p:5} we
  invoked~\cite[Lemma~3.4]{Johnstone85}, whose proof uses ordinal
  induction (although no choice principles are required). A
  constructive alternative proof is provided
  in~\cite[Theorem~1.9]{vickers97}.
\end{remark}

We may now show that complete Frith frames are strongly exact.

\begin{lemma}\label{l:2}
Complete Frith frames are strongly exact.
\end{lemma}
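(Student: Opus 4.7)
The plan is to use Theorem~\ref{cfrith} to identify the complete Frith frame $(L, S)$ with $(\idl(S), S)$, so that $S$ is precisely the set of compact elements of $L$, and then, given a strongly exact meet $a = \bigwedge P$ with $P \subseteq S$, to build a Scott-open filter $F$ of $L$ containing $P$ and invoke Proposition~\ref{p:5} to force $a$ into~$S$.

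Concretely, I would first let $G \subseteq S$ be the filter of $S$ generated by $P$, namely
\[
G = \{t \in S : \bigwedge P_0 \leq t \text{ for some finite } P_0 \subseteq P\},
\]
and then set $F := \{x \in L : t \leq x \text{ for some } t \in G\}$. The filter axioms for $F$ in $L$ are immediate from the corresponding axioms for $G$ in $S$, together with the fact that $S$ is a sublattice of~$L$. The first substantive point is that $F$ is Scott-open: if $D \subseteq L$ is directed with $\bigvee D \in F$, pick $t \in G$ with $t \leq \bigvee D$; since $t \in S$ is compact in $L \cong \idl(S)$, some finite subset of $D$ already joins above $t$, and by directedness some single $d \in D$ satisfies $t \leq d$, hence $d \in F$.

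Every $s \in P$ trivially lies in $F$ (take $P_0 = \{s\}$), so $P \subseteq F$, and Proposition~\ref{p:5} then yields $a = \bigwedge P \in F$. Thus there exists $t \in G$ with $t \leq a$, witnessed by some finite $P_0 \subseteq P$ such that $\bigwedge P_0 \leq t$. Since $a = \bigwedge P \leq \bigwedge P_0 \leq t \leq a$, equality holds throughout, so $a = \bigwedge P_0$; as $S$ is closed under finite meets, $a \in S$, as required. The only delicate step is the Scott-openness of $F$, and this is precisely where the identification of $S$ with the compact elements of $L$ supplied by Theorem~\ref{cfrith} is essential: without compactness of the generators in $G$, directed joins would not interact with $F$ in the required way.
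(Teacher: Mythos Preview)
Your proposal is correct and follows essentially the same route as the paper: form the filter of $L$ generated by $P$, observe it is Scott-open because the generators (and their finite meets) are compact via Theorem~\ref{cfrith}, apply Proposition~\ref{p:5} to get $a$ in that filter, and conclude that $a$ is a finite meet of elements of $P$, hence in $S$. Your intermediate filter $G \subseteq S$ merely spells out explicitly what the filter generated by $P$ looks like, so the argument is the paper's proof with more detail filled in.
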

\begin{proof}
  Let $(L,S)$ be a complete Frith frame and $P \subseteq S$ be such
  that $a = \bigwedge P$ is a strongly exact meet. Since, by
  Theorem~\ref{cfrith}, $P$ consists of compact elements and finite
  meets of compact elements are compact too, the filter~$F \subseteq
  L$ generated by~$P$ is Scott-open. By Proposition~\ref{p:5}, $a$
  belongs to~$F$. Thus, there exist $s_1, \dots, s_n \in P$ such that
  $a \geq s_1 \wedge \dots \wedge s_n$, and therefore also $a = s_1
  \wedge \dots \wedge s_n$. This means that $a \in S$, as required.
\end{proof}
\begin{lemma}\label{fskcompact}
  If $(L, S)$ is a complete Frith frame, then the biframe $\fsk(L,S)$
  is compact (and zero-dimensional).
\end{lemma}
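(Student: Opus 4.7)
The plan is to treat compactness and zero-dimensionality separately, handling zero-dimensionality as a direct unpacking of definitions and reducing compactness to a concrete identification of $\cC_S L$ with an ideal completion.

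For zero-dimensionality, no appeal to completeness is needed. Since $S$ is join-dense in $L$, every $\nabla_a \in \nabla L$ can be written as $\bigvee\{\nabla_s : s \in S,\ s \leq a\}$, and each such $\nabla_s$ is bicomplemented in $\fsk(L,S)$ with complement $\Delta_s \in \Delta S$. Dually, since $\Delta$ turns finite meets into finite joins, every finite meet of generators $\Delta_s$ ($s \in S$) is of the form $\Delta_{s'}$ with $s' \in S$ (specifically, $s' = \bigvee_{\text{finite}} s_i$); hence $\Delta S$ consists of arbitrary joins of elements $\Delta_{s'}$, $s' \in S$, each bicomplemented with complement $\nabla_{s'} \in \nabla L$.

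For compactness, the main step is to identify $\cC_S L$ as an ideal completion. Let $B(S)$ denote the free Boolean algebra generated by $S$ as a bounded distributive lattice. I claim $\cC_S L \cong \idl(B(S))$, and I would verify this by comparing universal properties. On one hand, since $(L,S)$ is complete, Theorem~\ref{cfrith} yields $L \cong \idl(S)$; combining this with Proposition~\ref{p:10}, frame homomorphisms $\cC_S L \to F$ correspond bijectively to lattice homomorphisms $S \to F$ whose values are all complemented in $F$. On the other hand, frame homomorphisms $\idl(B(S)) \to F$ correspond to bounded lattice homomorphisms $B(S) \to F$ by the adjunction $\idl \dashv U$; such a lattice map restricts along $S \hookrightarrow B(S)$ to a lattice map $S \to F$ with complemented values (the image of a complemented element is complemented), and conversely any lattice map $S \to F$ with complemented values factors through the Boolean sublattice of $F$ formed by the complemented elements and, by the universal property of $B(S)$, extends uniquely to a lattice map $B(S) \to F$. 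Matching these bijections produces the desired isomorphism, under which $\nabla: L \hookrightarrow \cC_S L$ corresponds to the functorial embedding $\idl(S) \hookrightarrow \idl(B(S))$ and $\Delta_s$ to the principal ideal generated by the complement of $s$ in $B(S)$.

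Once this identification is made, compactness of $\cC_S L$ is immediate: $\idl(B(S))$ is a coherent frame whose compact elements are exactly the principal ideals from $B(S)$, and in particular its top element (the principal ideal generated by $1_{B(S)}$, corresponding to $\nabla_1$) is compact. The main obstacle is to assemble the chain of universal properties cleanly: the argument shuttles between frames, bounded distributive lattices, and Boolean algebras, and one must be careful that the restriction/extension between lattice maps $B(S) \to F$ and complemented lattice maps $S \to F$ is a genuine bijection for arbitrary frame codomains $F$, not only Boolean ones.
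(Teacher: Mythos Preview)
Your argument is correct, but it takes a longer route than the paper. The paper's proof is a two-liner: by \emph{definition}, $(L,S)$ is complete precisely when its symmetrization $\fsym(L,S)=(\cC_S L,\overline S)$ is complete, and Theorem~\ref{cfrith} then says $(\cC_S L,\overline S)$ is coherent, so in particular $\cC_S L$ is compact. Zero-dimensionality is dismissed as obvious. Your approach rebuilds essentially the same fact from scratch: your free Boolean algebra $B(S)$ is (up to isomorphism) exactly $\overline S$, and your isomorphism $\cC_S L\cong\idl(B(S))$ is the content of ``$(\cC_S L,\overline S)$ is coherent'', except that you derive it via a Yoneda-style comparison of universal properties rather than by invoking the definition of completeness through symmetrization. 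What you gain is a self-contained argument that does not appeal to the symmetrization machinery; what the paper gains is brevity, since that machinery has already been set up.

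One small slip in your zero-dimensionality paragraph: to get $\bigwedge_i \Delta_{s_i}=\Delta_{\bigvee_i s_i}$ you need the property that $\Delta$ turns \emph{joins into meets}, not ``finite meets into finite joins'' as you wrote. Your formula $s'=\bigvee_{\text{finite}} s_i$ shows you are using the right identity; only the justification is misnamed.
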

\begin{proof}
  By definition, $(L, S)$ is complete if and only if so is $\fsym(L,
  S) = (\cC_SL, \overline{S})$.  By Theorem~\ref{cfrith}, $(\cC_SL,
  \overline{S})$ is complete if and only if it is coherent. In
  particular, if $(L, S)$ is complete, then $\cC_SL$ is compact. But,
  by definition, this means that $\fsk(L, S) = (\cC_SL, \nabla L,
  \Delta S)$ is compact. Finally, $\fsk(L, S)$ is clearly
  zero-dimensional, for every Frith frame $(L, S)$.
\end{proof}

\begin{lemma}\label{bbcomplete}
  If $\cL$ is a compact biframe, then the Frith frame $\bbf_+(\cL)$ is
  complete. In particular, every compact and zero-dimensional biframe
  is of the form $\fsk(L,S)$, for some complete Frith frame~$(L, S)$.
\end{lemma}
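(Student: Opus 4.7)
The plan is to appeal to the characterization of complete Frith frames from Theorem~\ref{cfrith}: $(M,T)$ is complete if and only if it is coherent, i.e., $T$ is the lattice of compact elements of~$M$ (and join-generates it). So, writing $(M,T) := \bbf_+(\cL) = (\langle \bb_+(\cL)\rangle_\Frm,\,\bb_+(\cL))$, it suffices to show that the compact elements of $M$ are exactly the elements of~$T$.

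First I would check that every $t \in T$ is compact in~$M$. Since $t$ is complemented in the compact frame $L$, the standard argument applies: if $t \leq \bigvee X$ in~$L$, then $1 = t^* \vee t \leq t^* \vee \bigvee X$, and compactness of~$1$ produces a finite $X' \subseteq X$ with $1 = t^* \vee \bigvee X'$, whence $t = t \wedge (t^* \vee \bigvee X') \leq \bigvee X'$. Thus $t$ is compact in~$L$, and since $M$ is a subframe of~$L$ (so joins in~$M$ coincide with joins in~$L$), $t$ is also compact in~$M$. Conversely, any compact $m \in M$ lies in~$T$: writing $m = \bigvee X$ with $X \subseteq T$ (possible because $T$ join-generates~$M$), compactness of~$m$ gives a finite $X' \subseteq X$ with $m = \bigvee X'$; and since $T$ is a sublattice of~$M$, this finite join lies in~$T$. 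This confirms that $(M,T)$ is coherent, hence complete.

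For the \emph{in particular} clause, if $\cL$ is additionally zero-dimensional, then by Proposition~\ref{p:8} the counit $\varepsilon_\cL\colon \fsk \circ \bbf_+(\cL) \to \cL$ is an isomorphism. Setting $(L,S) := \bbf_+(\cL)$, which is complete by the first part, we obtain $\cL \cong \fsk(L,S)$, as required. The only mildly delicate point is the passage from compactness in~$L$ to compactness in the subframe~$M$, which the first paragraph handles; everything else is routine application of the machinery already developed.
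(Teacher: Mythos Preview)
Your proof is correct and follows essentially the same route as the paper's: show that each $t \in T = \bb_+(\cL)$ is compact in $L$ via the standard ``complemented in a compact frame'' argument, pass to the subframe $M$, invoke Theorem~\ref{cfrith}, and then use Proposition~\ref{p:8} for the final clause. The only difference is that you additionally verify the converse (every compact element of $M$ lies in $T$), which is harmless but unnecessary: Theorem~\ref{cfrith} already yields completeness once $T$ consists of compact elements and join-generates $M$, so the paper omits that direction.
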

\begin{proof}
  Let $\cL = (L, L_+, L_-)$ be a compact biframe and $(M, T) =
  \bbf_+(\cL)$ (that is, $T$ is the lattice of bicomplemented elements
  of~$L_+$ and $M$ is the subframe of $L$ it generates). By
  Theorem~\ref{cfrith}, $\bbf_+(\cL)$ is complete provided $T$ consists
  of compact elements of $M$. Since $M$ is, by definition, a subframe
  of $L_+$, it suffices to show that bicomplemented elements of $L_+$
  are compact in~$L_+$. Let then $t \in L_+$ be bicomplemented and
  suppose that $t \leq \bigvee P$, for some subset $P \subseteq
  L_+$. Then, we have $1 = t \vee t^* \leq \bigvee P \vee t^*$ and,
  since $L$ is compact, there exists a finite subset $P' \subseteq P$
  such that $1 \leq \bigvee P' \vee t^*$. But this implies that $t
  \leq \bigvee P'$, as required.

  Finally, if $\cL$ is compact and zero-dimensional then, by
  Proposition~\ref{p:8}, $\cL \cong \fsk\circ \bbf_+(\cL)$ and, in
  particular, it is of the form $\fsk(L, S)$ for some complete Frith
  frame $(L, S)$.
\end{proof}

We may thus conclude the following:
\begin{corollary}\label{c:5}
  The adjunction $\fsk\dashv \bbf_+$ restricts to an equivalence
  $\cffrm \cong \kzbifrm$.
\end{corollary}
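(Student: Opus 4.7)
The plan is to combine the three preceding lemmas with the idempotency of $\fsk \dashv \bbf_+$ established in Section~\ref{sec:se}. Because the adjunction is already known to be idempotent, it suffices to check that (i)~the two functors restrict correctly between $\cffrm$ and $\kzbifrm$, and (ii)~the unit and counit of the ambient adjunction are isomorphisms on the relevant objects.

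First I would dispatch (i) directly from the preceding material: Lemma~\ref{fskcompact} gives $\fsk[\cffrm] \subseteq \kzbifrm$, and Lemma~\ref{bbcomplete} gives $\bbf_+[\kzbifrm] \subseteq \cffrm$ (since every compact zero-dimensional biframe is, in particular, compact). Thus $\fsk$ and $\bbf_+$ each cut down to functors between the two subcategories of interest.

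For (ii), consider first the unit $\eta_{(L,S)} : (L, S) \to \bbf_+ \circ \fsk(L, S)$ at a complete Frith frame~$(L, S)$. By Lemma~\ref{l:2}, $(L, S)$ is strongly exact, and Corollary~\ref{c:2} identifies the strongly exact Frith frames as precisely the fixpoints of $\fsk \dashv \bbf_+$ on the $\ffrm$-side; hence $\eta_{(L,S)}$ is an isomorphism. For the counit $\varepsilon_\cL : \fsk \circ \bbf_+(\cL) \to \cL$ at a compact zero-dimensional biframe~$\cL$, the final sentence of Proposition~\ref{p:8} asserts directly that $\varepsilon_\cL$ is an isomorphism. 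Combining these two facts with (i) yields the desired equivalence $\cffrm \cong \kzbifrm$.

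No real obstacle is expected; the corollary is essentially a bookkeeping assembly of Lemmas~\ref{l:2}, \ref{fskcompact} and~\ref{bbcomplete}, Proposition~\ref{p:8}, and Corollary~\ref{c:2}. The only mildly delicate point is that the fixpoints of the adjunction on the $\bifrm$-side are \emph{not} completely characterized in the paper (cf.\ Example~\ref{sec:1}); here we merely use that compact zero-dimensional biframes sit inside the class of fixpoints, which is enough to conclude the equivalence via the restricted unit and counit without needing to identify all fixpoints.
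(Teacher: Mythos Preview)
Your proposal is correct and matches the paper's own argument. The paper does not spell out a proof for this corollary beyond ``We may thus conclude the following'', but the preceding narrative explicitly lays out exactly your strategy: use Lemma~\ref{l:2} to place $\cffrm$ inside the fixpoints of the $\ffrm$-side (via Corollary~\ref{c:2}), use Lemmas~\ref{fskcompact} and~\ref{bbcomplete} to establish $\fsk[\cffrm] = \kzbifrm$, and invoke Proposition~\ref{p:8} for the counit on compact zero-dimensional biframes---precisely the ingredients you assemble.
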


\begin{corollary}
  If the Prime Ideal Theorem holds, then the duality between compact
  zero-dimensional bitopological spaces and compact zero-dimensional
  biframes may be seen as restriction and co-restriction of the dual
  adjunction ${\bf \Omega}: \perv \lra \ffrm:\ptf$ along full
  subcategory embeddings.
\end{corollary}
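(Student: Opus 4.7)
The plan is to chain together three results already established: Theorem~\ref{t:1}, which restricts the Pervin-Frith dual adjunction to a duality $\cperv \cong \cffrm^{op}$ (using PIT); Corollary~\ref{c:6}, which restricts $\clf_+ \dashv \psk$ to an equivalence $\kzbitop \cong \cperv$ (again using PIT); and Corollary~\ref{c:5}, which restricts $\fsk \dashv \bbf_+$ to an equivalence $\kzbifrm \cong \cffrm$. Composing these should directly yield the desired duality and exhibit it as a restriction of ${\bf \Omega}:\perv \lra \ffrm:\ptf$.

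First I would note that, \emph{by definition}, both $\cperv$ and $\cffrm$ are full subcategories of $\perv$ and $\ffrm$ respectively. Hence, composing each of the equivalences $\clf_+: \kzbitop \to \cperv$ and $\bbf_+: \kzbifrm \to \cffrm$ with the inclusions $\cperv \hookrightarrow \perv$ and $\cffrm \hookrightarrow \ffrm$ produces full embeddings $\kzbitop \hookrightarrow \perv$ and $\kzbifrm \hookrightarrow \ffrm$. This is the sense in which we are embedding \emph{fully}.

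Next, I would take the duality from Theorem~\ref{t:1} and transport it through these equivalences. Concretely, the functor ${\bf \Omega}\circ \clf_+: \kzbitop \to \ffrm^{op}$ factors through $\cffrm^{op}$ (since, by Corollary~\ref{c:6}, its source lies in $\cperv$ after applying $\clf_+$, and Theorem~\ref{t:1} then sends this into $\cffrm^{op}$), and dually $\ptf \circ \bbf_+: \kzbifrm \to \perv$ factors through $\cperv$. This yields a duality between $\kzbitop$ and $\kzbifrm$ which, by construction, is a restriction of ${\bf \Omega} \dashv \ptf$ along full subcategory embeddings.

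The only remaining point is to verify that the duality obtained this way agrees with the classical bitopological duality of \cite{Banaschewski89}; for this I would invoke Proposition~\ref{p:14}, which gives the natural isomorphisms $\bbf_+ \circ {\bf \Omega}_b \cong {\bf \Omega}\circ \clf_+$ and $\psk \circ \ptf \cong \ptf_b \circ \fsk$, ensuring that the two dualities coincide on $\kzbitop$ and $\kzbifrm$. The main (mild) obstacle is bookkeeping: tracking how the idempotency of the two adjunctions $\clf_+ \dashv \psk$ and $\fsk \dashv \bbf_+$ combines with Theorem~\ref{t:1} to guarantee that no fixpoint conditions are violated when passing back and forth; but this is essentially automatic once one assembles the commuting diagrams provided by Proposition~\ref{p:14}.
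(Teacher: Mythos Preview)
Your proposal is correct and matches the paper's approach: the paper states this corollary without proof, treating it as an immediate consequence of Theorem~\ref{t:1}, Corollary~\ref{c:6}, and Corollary~\ref{c:5}, which is precisely the chain you assemble. Your additional remark about Proposition~\ref{p:14} reconciling this with the classical bitopological duality of~\cite{Banaschewski89, Picado94} is also exactly what the paper addresses in the paragraph following the corollary.
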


We remark that, in~\cite{Picado94}, the duality between compact
zero-dimensional bispaces and compact zero-dimensional biframes is
seen as a restriction and co-restriction of the classical dual
adjunction ${\bf \Omega}_b: \bitop \lra \bifrm: \ptf_b$. However,
although bitopological spaces and biframes are related to Pervin spaces
and Frith frames (namely, via the functors $\psk$ and $\fsk$,
respectively), as seen in Example~\ref{sec:2}, the two extensions of
the adjunction ${\bf \Omega}: \Top \lra \Frm : \ptf$ are not
comparable. The situation is different when we restrict to $T_0$
complete and to $T_0$ compact and zero-dimensional structures, as in
that case, both ${\bf \Omega}_b: \bitop \lra \bifrm: \ptf_b$ and ${\bf
  \Omega}: {\bf C\perv} \lra {\bf C\ffrm}: \ptf$ coincide, up to
equivalence. Therefore, our duality ends up being equivalent to the duality of~\cite{Picado94}.

\subsection{Summary}\label{sec:overview}

We summarize in the diagram below the categorical equivalences shown
in Sections~\ref{sec:stone}--\ref{subsec:bitopdua}, which exhibit
several Stone-type dualities as restrictions of the dual adjunction
between Pervin spaces and Frith frames. The results marked with~$*$
require the use of the Prime Ideal Theorem.

\begin{center}
  \begin{tikzpicture}[node distance = 20mm]
    \node (bitop) {$\bitop$};
    \node[right of = bitop, xshift = 20mm] (perv) {$\perv$};
    \node[right of = perv, xshift = 20mm] (frith) {$\ffrm$};
    \node[right of = frith, xshift = 20mm] (bifrm) {$\bifrm$};
    \node[below of = bitop] (bitop0) {$\bitop_{\rm Z}$};
    \node[below of = bitop0] (bitopKZ) {$\bitop_{\rm KZ}$};
    \node[below of = bifrm] (fix) {?};
    \node[below of = fix] (bifrmKZ) {$\bifrm_{\rm KZ}$};
    \node[below of = perv] (pervse) {$\perv_{se}$};
    \node[below of = pervse] (cperv) {$\bf C\perv$};
    \node[below of = frith] (frithse) {$\ffrm_{se}$};
    \node[below of = frithse] (cfrith) {$\bf C\ffrm$};
    \node[below of = cfrith] (dlat) {$\dlat$};
    \node[below right of = cperv, yshift = -6mm] (spec) {$\spec$};
    \node[below left of = cperv, yshift = -6mm] (priest) {$\pri$};
    \draw ([xshift=-1.5mm]dlat.90) to node[ArrowNode, right, xshift
    =-.5mm]{\rotatebox{90}{$\cong$}} ([xshift=-1.5mm]cfrith.270); \draw
    ([xshift=1.5mm]cfrith.270) to node[ArrowNode,right]
    {\rotatebox{90}{\scriptsize Thm. \ref{cfrith}}} ([xshift=1.5mm]dlat.90);
    \draw ([xshift=1.5mm]spec.135) to node[ArrowNode, right, xshift
    =-1.5mm, yshift =0mm]{\rotatebox{-60}{$\cong$}}
    ([xshift=1.5mm]cperv.315); \draw ([xshift=4.5mm]cperv.315) to
    node[ArrowNode,right, xshift =-4mm, yshift =1mm]
    {\rotatebox{-60}{\scriptsize Prop. \ref{p:6} $^*$}}
    ([xshift=4.5mm]spec.135);
    \draw ([xshift=-1.5mm]priest.45) to node[ArrowNode, left, xshift
    =1.5mm]{\rotatebox{53}{$\cong$}} ([xshift=-1.5mm]cperv.270); \draw
    ([xshift=-4.5mm]cperv.270) to node[ArrowNode,left, xshift = 5mm,
    yshift = 1.5mm]
    {\rotatebox{53}{\scriptsize Prop.~\ref{p:7} $^*$}}
    ([xshift=-4.5mm]priest.45);
    \draw[right hook->] (bitopKZ) to (bitop0); \draw[right hook->]
    (bitop0) to (bitop); \draw[right hook->] (cperv) to node[above,
    yshift = -9mm, xshift = 2mm] {\rotatebox{90}{\scriptsize
        Cor. \ref{c:7} $^*$}} (pervse); \draw[right hook->] (pervse)
    to (perv); \draw[right hook->] (cfrith) to node[above, yshift =
    -9mm, xshift = 2mm] {\rotatebox{90}{\scriptsize Lem. \ref{l:2}}}
    (frithse); \draw[right hook->] (frithse) to (frith); \draw[right
    hook->] (bifrmKZ) to node[above, yshift = -9mm, xshift = 2mm]
    {\rotatebox{90}{\scriptsize Prop. \ref{p:8}}} (fix); \draw[right
    hook->] (fix) to (bifrm);
    \draw ([yshift=-1.5mm]bitop.0) to node[ArrowNode,below] {$\clf_+$}
    node[ArrowNode, above, yshift =-1mm] {\rotatebox{-90}{$\vdash$}}
    ([yshift=-1.5mm]perv.180);
    \draw ([yshift=1.5mm]perv.180) to
    node[ArrowNode,above] {$\psk$} ([yshift=1.5mm]bitop.0);
    \draw ([yshift=-1.5mm]perv.0) to node[ArrowNode,below] {${\bf
        \Omega}$} ([yshift=-1.5mm]frith.180); \draw
    ([yshift=1.5mm]frith.180) to node[ArrowNode,above] {$\ptf$}
    ([yshift=1.5mm]perv.0);
    \draw ([yshift=-1.5mm]frith.0) to node[ArrowNode,below] {$\bbf_+$}
    node[ArrowNode, above, yshift =-1mm] {\rotatebox{-90}{$\vdash$}}
    ([yshift=-1.5mm]bifrm.180);
    \draw ([yshift=1.5mm]bifrm.180) to
    node[ArrowNode,above] {$\fsk$} ([yshift=1.5mm]frith.0);
    \draw ([yshift=-1.5mm]pervse.0) to node[ArrowNode,below]
    {\scriptsize Prop.~\ref{p:9}} ([yshift=-1.5mm]frithse.180);
    \draw[dashed] ([yshift=1.5mm]frithse.180) to node[ArrowNode,above]
    {$?$} ([yshift=1.5mm]pervse.0);
    \draw ([yshift=-1.5mm]bitop0.0) to node[ArrowNode, above, yshift
    =-1mm] {$\cong$} ([yshift=-1.5mm]pervse.180); \draw
    ([yshift=1.5mm]pervse.180) to node[ArrowNode,above] {\scriptsize
      Cor. \ref{pointsetworks}} ([yshift=1.5mm]bitop0.0);
    \draw ([yshift=-1.5mm]frithse.0) to node[ArrowNode, above, yshift
    =-1mm] {$\cong$} ([yshift=-1.5mm]fix.180); \draw
    ([yshift=1.5mm]fix.180) to node[ArrowNode,above] {\scriptsize
      Cor. \ref{c:2}} ([yshift=1.5mm]frithse.0);
    \draw ([yshift=-1.5mm]bitopKZ.0) to node[ArrowNode, above, yshift
    =-1mm]{$\cong$} ([yshift=-1.5mm]cperv.180); \draw
    ([yshift=1.5mm]cperv.180) to node[ArrowNode,above] {\scriptsize
      Cor. \ref{c:6} $^*$} ([yshift=1.5mm]bitopKZ.0);
    \draw ([yshift=-1.5mm]cperv.0) to node[ArrowNode, above, yshift
    =-1mm]{$\cong$} ([yshift=-1.5mm]cfrith.180); \draw
    ([yshift=1.5mm]cfrith.180) to node[ArrowNode,above] {\scriptsize
      Thm. \ref{t:1} $^*$} ([yshift=1.5mm]cperv.0);
    \draw ([yshift=-1.5mm]cfrith.0) to node[ArrowNode, above, yshift
    =-1mm] {$\cong$} ([yshift=-1.5mm]bifrmKZ.180); \draw
    ([yshift=1.5mm]bifrmKZ.180) to node[ArrowNode,above] {\scriptsize
      Cor. \ref{c:5}} ([yshift=1.5mm]cfrith.0);
  \end{tikzpicture}
\end{center}

We highlight that the explicit equivalences between $\bitop_{\rm KZ}$ and $\pri$, and between $\bifrm_{\rm KZ}$ and $\dlat$ have first been studied in~\cite{Picado94}.

\section{The $T_D$ axiom for Pervin spaces and locale-based Frith
  frames}\label{sec:last}

In \cite[Section~4.5]{borlido21} a Pervin equivalent of the $T_D$ axiom for topological
spaces is introduced. A Pervin space is called \emph{$T_D$} if for every
$x\in X$ there is $S\in \cS$ such that $x\in S$ and $S{\sm}\{x\}\in
\cS$. In there, the following slight variants of this definition were
shown to be equivalent:
\begin{lemma}\label{l:10}
  Let $(X, \cS)$ be a Pervin space. Then, the following are equivalent:
  \begin{enumerate}
  \item $(X, \cS)$ is $T_D$,
  \item\label{item:8} for every $x \in X$, there are distinct $S_1,
    S_2 \in \cS$ such that $S_1 \setminus \{x\} = S_2 \setminus
    \{x\}$,
  \item\label{item:7} for every $x \in X$, there are $S_1, S_2 \in
    \cS$ such that $S_1 \setminus S_2 = \{x\}$.
  \end{enumerate}
\end{lemma}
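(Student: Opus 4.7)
The plan is to establish the equivalence by a short cyclic argument (1) $\Rightarrow$ (2) $\Rightarrow$ (3) $\Rightarrow$ (1), exploiting only the fact that $\cS$ is closed under finite intersections together with elementary set-theoretic manipulations. None of the three conditions requires joins in $\cS$, so the only ``closure'' fact I expect to invoke is that $S_1 \cap S_2 \in \cS$ whenever $S_1, S_2 \in \cS$.

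For (1) $\Rightarrow$ (2), given $x \in X$ I take the witness $S \in \cS$ of the $T_D$ property and set $S_1 := S$ and $S_2 := S \setminus \{x\}$; both lie in $\cS$, they are distinct (since $x$ separates them), and by construction $S_1 \setminus \{x\} = S \setminus \{x\} = S_2 = S_2 \setminus \{x\}$. For (2) $\Rightarrow$ (3), given distinct $S_1, S_2 \in \cS$ with $S_1 \setminus \{x\} = S_2 \setminus \{x\}$, the only point where they can disagree is $x$, so, up to swapping $S_1$ and $S_2$, I may assume $x \in S_1$ and $x \notin S_2$; a direct check then shows $S_1 \setminus S_2 = \{x\}$.

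The step I expect to require a short calculation is (3) $\Rightarrow$ (1). Given $S_1, S_2 \in \cS$ with $S_1 \setminus S_2 = \{x\}$, the key observation is the identity $S_1 \setminus \{x\} = S_1 \cap S_2$, which follows by a two-inclusion argument using that $x$ is the unique element of $S_1$ not in $S_2$. Since $\cS$ is closed under finite meets, $S_1 \cap S_2 \in \cS$, and $S_1 \in \cS$ already contains $x$; this verifies the $T_D$ condition with witness $S := S_1$.

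No step looks like a genuine obstacle; the whole lemma is essentially a translation between three ways of saying that the point $x$ is ``isolated modulo $\cS$''. The main care point is just the direction of the inclusion $S_1 \setminus \{x\} = S_1 \cap S_2$ in the last implication, which needs the hypothesis $S_1 \setminus S_2 = \{x\}$ in both directions.
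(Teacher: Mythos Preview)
Your proof is correct: the cyclic argument (a) $\Rightarrow$ (b) $\Rightarrow$ (c) $\Rightarrow$ (a) works exactly as you describe, and the only nontrivial identity $S_1 \setminus \{x\} = S_1 \cap S_2$ in the last step is verified by the two-inclusion argument you outlined, using closure of $\cS$ under binary intersections. Note that the paper does not give its own proof of this lemma but merely records it as having been established in~\cite{borlido21}, so there is no ``paper's approach'' to compare against; your elementary argument is precisely what one would expect and would serve perfectly well as a self-contained proof.
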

On the other hand, in~\cite{banaschewski10} the $T_D$ axiom is
compared to sobriety and it is highlighted that the two axioms are, in
some sense, duals of each other. In particular, the following is
proven.

\begin{proposition}[{\cite[Proposition 4.3]{banaschewski10}}]\label{p:3}
  Let $X$ be a topological space. In the category of $T_0$ topological
  spaces we have the following:
\begin{enumerate}
\item The space $X$ is sober if and only if whenever we have an
  extremal monomorphism $f:X\hookrightarrow Y$ such that ${\bf
    \Om}(f)$ is an isomorphism $f$ must be an isomorphism.
\item The space $X$ is $T_D$ if and only if whenever we have an
  extremal monomorphism $f:Y\hookrightarrow X$ such that ${\bf
    \Om}(f)$ is an isomorphism $f$ must be an isomorphism.
\end{enumerate}
\end{proposition}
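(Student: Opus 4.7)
My plan rests on the fact that in $\Top$ the extremal monomorphisms are (up to iso) the topological embeddings, so I would reduce each statement to the analysis of inclusions $Y\hookrightarrow Z$ of $T_0$ spaces for which the restriction map $\Omega(Z)\to \Omega(Y)$, $U\mapsto U\cap Y$, is a bijection. I would handle each direction by testing against a canonical such inclusion: the soberification unit $\eta_X\colon X\to X^s$ for part $(a)$, and the single-point deletion $X\setminus\{x\}\hookrightarrow X$ for part $(b)$.

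For part $(a)$, in the forward direction I would assume $X$ sober and let $f\colon X\hookrightarrow Y$ be an extremal monomorphism of $T_0$ spaces with ${\bf \Omega}(f)$ an iso. The iso ${\bf\Omega}(f)\colon \Omega(Y)\to \Omega(X)$ transports completely prime filters, giving a bijection $\pt(\Omega(X))\cong \pt(\Omega(Y))$ natural in $f$; sobriety of $X$ and $T_0$-ness of $Y$ would then yield, through the commuting square $\eta_Y\circ f=\ptf({\bf \Omega}(f))\circ \eta_X$, that $f$ is a bijection and hence (being an embedding) a homeomorphism. For the backward direction I would apply the hypothesis to $\eta_X$: when $X$ is $T_0$, this is always an embedding into the $T_0$ space $X^s$ with ${\bf \Omega}(\eta_X)$ an iso, so the hypothesis forces $\eta_X$ to be iso, i.e.\ $X\cong X^s$ is sober.

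For part $(b)$, the pivot I would first isolate is a pointwise dichotomy: for any $x\in X$ the restriction map $\Omega(X)\to \Omega(X\setminus\{x\})$ is always surjective (subspace topology), and it is injective precisely when $X$ fails $T_D$ at $x$ (the topological analogue of Lemma~\ref{l:10}\ref{item:8}: two distinct opens restrict to the same open of $X\setminus\{x\}$ iff they differ only at $x$, which witnesses $T_D$ at $x$). Granting this, the forward direction goes by contradiction: given $X$ that is $T_D$ and an extremal monomorphism $f\colon Y\hookrightarrow X$ with ${\bf\Omega}(f)$ iso, if some $x\in X\setminus f[Y]$ existed, then $T_D$-ness at $x$ would produce distinct opens $U_1,U_2$ of $X$ with $U_1\setminus U_2=\{x\}$, whence $f^{-1}(U_1)=f^{-1}(U_2)$, contradicting injectivity of ${\bf \Omega}(f)$. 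The backward direction is the contrapositive: if $X$ is not $T_D$ at some $x$, then the inclusion $X\setminus\{x\}\hookrightarrow X$ (a $T_0$ subspace) is an extremal monomorphism with ${\bf \Omega}$ iso that is not itself iso.

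The hard part I anticipate is not any one of the four short arguments above but the pointwise combinatorial unpacking relating non-injectivity of the single-point restriction map to the $T_D$ condition at that point; this is the hinge on which both directions of part $(b)$ turn, and once it is stated cleanly, the two implications become two sides of the same coin. Part $(a)$ then follows by the standard machinery of soberification, and no choice principles (in particular, no Prime Ideal Theorem) are required at any step, since $\eta_X$ is constructively available and the $T_D$ witnesses are explicit.
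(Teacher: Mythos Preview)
The paper does not supply its own proof of this proposition; it is quoted verbatim from \cite{banaschewski10}. Your argument is correct and complete: for part~$(a)$ you use the soberification unit $\eta_X$ and naturality to reduce to the bijection $\pt(\Omega(X))\cong\pt(\Omega(Y))$, and for part~$(b)$ you correctly isolate the pointwise dichotomy that the restriction map $\Omega(X)\to\Omega(X\setminus\{x\})$ fails injectivity exactly when $T_D$ holds at $x$, then run both implications off that single fact.

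It is worth noting that your strategy for part~$(b)$---testing against the inclusion $X\setminus\{x\}\hookrightarrow X$ and reading off the $T_D$ witnesses from the failure of injectivity of the restriction map---is precisely the template the paper follows in its proof of the Pervin analogue, Proposition~\ref{td}\ref{tdb}, where the same single-point deletion and the characterization of Lemma~\ref{l:10}\ref{item:8} play the corresponding roles. So although the paper omits the proof of Proposition~\ref{p:3} itself, your approach is in full alignment with how the authors treat the parallel statement.
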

We will now show a Pervin version of this proposition, where
\emph{sober} is replaced by \emph{complete} and the \emph{$T_D$-axiom}
is replaced by its equivalent for Pervin spaces, being the next lemma the
key ingredient. Recall the forgetful function $\lperv: \perv \to
\dlat^{op}$ introduced in Section~\ref{sec:forget}.

\begin{lemma}\label{l:11}
  Let $f: (X, \cS) \to (Y, \cT)$ be a morphism of Pervin spaces. Then,
  \begin{enumerate}
  \item\label{item:5} $\lperv(f):\cT \to \cS$ is injective if and only
    if $\psym(f): (X, \overline{\cS}) \to (Y, \overline{\cT})$ is
    dense;
  \item\label{item:6} $\lperv(f):\cT \to \cS$ is surjective if and
    only if ${\bf \Omega}(f): (\Omega_\cT(Y), \cT)\to (\Omega_\cS(X),
    \cS) $ is an extremal epimorphism of Frith frames. Moreover, if
    $(X, \cS)$ is $T_0$, these are further equivalent to $f$ being an
    extremal monomorphism of Pervin spaces.
  \end{enumerate}
\end{lemma}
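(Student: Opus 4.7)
For part~\ref{item:5}, the natural strategy is to pass to the Booleanization. Note that $\lperv(f)=f^{-1}\colon \cT\to\cS$ uniquely extends (through $f^{-1}\colon \cP(Y)\to \cP(X)$ restricted appropriately) to a Boolean algebra homomorphism $\overline{\cT}\to\overline{\cS}$, which is the lattice component of $\psym(f)$. I would first argue that $f^{-1}\colon \cT\to\cS$ is injective if and only if its extension $\overline{\cT}\to\overline{\cS}$ is injective. One direction is trivial; for the other, any $C\in\overline{\cT}$ admits a disjunctive normal form $C=\bigcup_{i=1}^{n}(T_i\setminus T_i')$ with $T_i,T_i'\in\cT$ and (after replacing $T_i$ by $T_i\cup T_i'$) $T_i'\subseteq T_i$, so $f^{-1}(C)=\emptyset$ forces $f^{-1}(T_i)=f^{-1}(T_i')$ for each $i$, hence $T_i=T_i'$, hence $C=\emptyset$. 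Since a Boolean algebra homomorphism is injective exactly when it reflects the bottom element, injectivity of $\overline{f^{-1}}$ is equivalent to: $f^{-1}(C)=\emptyset$ implies $C=\emptyset$ for all $C\in\overline{\cT}$. By Lemma~\ref{l:9}, this is precisely density of $\psym(f)$.

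For part~\ref{item:6}, the first equivalence is almost immediate: $\mathbf{\Omega}(f)$ is, on both the frame and the generating-lattice parts, the preimage function, so by Proposition~\ref{p:4} it is an extremal epimorphism of Frith frames if and only if $f^{-1}[\cT]=\cS$, which is exactly surjectivity of $\lperv(f)$.

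For the \emph{moreover} part, I would invoke the characterization of extremal monomorphisms in $\perv$ (from the proposition in Section~\ref{sec:perv-frith}): $f$ is an extremal monomorphism if and only if $f$ is injective \emph{and} $f^{-1}[\cT]=\cS$. The forward implication is then built in. For the converse, assuming $(X,\cS)$ is $T_0$ and $f^{-1}[\cT]=\cS$, the only thing to check is that $f$ is injective. Given $x_1\neq x_2$ in $X$, the $T_0$ axiom on $(X,\Omega_\cS(X))$ yields an open set separating them; since $\cS$ is a bounded sublattice of $\cP(X)$ and hence a base closed under finite meets for $\Omega_\cS(X)$, one of the basic opens $S\in\cS$ in the union defining this set already separates $x_1$ and $x_2$. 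Writing $S=f^{-1}(T)$ with $T\in\cT$ gives $f(x_1)\in T$ and $f(x_2)\notin T$, so $f(x_1)\neq f(x_2)$. The only subtle point in the whole argument is the Booleanization step in~\ref{item:5}, where one must be careful to use both closure under finite meets and closure under finite joins of $\cT$ to put elements of $\overline{\cT}$ into the normal form $T_i\setminus T_i'$ with $T_i'\subseteq T_i$; everything else is a direct invocation of the results already available.
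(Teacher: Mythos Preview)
Your proof is correct and follows essentially the same approach as the paper's. For part~\ref{item:5} both arguments reduce to the disjunctive normal form $T_1\cap T_2^c$ for elements of $\overline{\cT}$; the paper phrases this slightly more directly by noting that injectivity of a lattice homomorphism is equivalent to order-reflection, hence to the single-difference condition $f^{-1}(T_1\cap T_2^c)=\emptyset\Rightarrow T_1\cap T_2^c=\emptyset$, but your route via the Boolean extension is equivalent. For part~\ref{item:6} the first equivalence is handled identically; for the ``moreover'' clause the paper simply cites \cite[Corollary~4.17]{borlido21}, whereas you supply the (correct) elementary separation argument directly.
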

\begin{proof}
  Let us start by proving~\ref{item:5}. Since $\lperv(f)$ is a lattice
  homomorphism, being injective is equivalent to having
  \[\forall T_1, T_2 \in \cT, \ f^{-1}(T_1) \subseteq f^{-1}(T_2)
    \implies T_1 \subseteq T_2,\]
  which is easily seen to be equivalent to having
  \[\forall T_1, T_2 \in \cT, \ f^{-1}(T_1 \cap T_2^c) = \emptyset
    \implies T_1 \cap T_2^c = \emptyset.\]
  Since $\overline{\cT}$ consists of the finite joins of elements of
  the form $T_1 \cap T_2^c$, with $T_1, T_2 \in \cT$, this means that
  $\psym(f)$ is dense, as required.

  Finally, the first assertion of~\ref{item:6} is a trivial
  consequence of the characterization of extremal epimorphisms of
  Frith frames (recall Proposition~\ref{p:4}), while the second one is
  the content of~\cite[Corollary~4.17]{borlido21}.
\end{proof}

\begin{proposition}
  \label{td}
  Let $(X,\cS)$ and $(Y, \cT)$ be $T_0$ Pervin spaces. We have the
  following.
  \begin{enumerate}
  \item \label{tda}The space $(X, \cS)$ is complete if and only if
    whenever there is a map $f:(X,\cS)\ra (Y,\cT)$ such that
    $\lperv(f)$ is an isomorphism $f$ must be an isomorphism.
  \item \label{tdb}The space $(X, \cS)$ is $T_D$ if and only if
    whenever there is a map $f:(Y,\cT)\ra (X,\cS)$ such that
    $\lperv(f)$ is an isomorphism $f$ must be an isomorphism.
  \end{enumerate}
\end{proposition}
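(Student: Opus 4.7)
The plan is to use Lemma~\ref{l:11} as the central tool in both parts. For a morphism $f: (X, \cS) \to (Y, \cT)$ of Pervin spaces with $(X, \cS)$ being $T_0$, that lemma will give us that $\lperv(f)$ is surjective exactly when $f$ is an extremal monomorphism of Pervin spaces, and that $\lperv(f)$ is injective exactly when $\psym(f)$ is dense. Combining the two, $\lperv(f)$ is a lattice isomorphism precisely when $f$ is an extremal monomorphism whose symmetrization is dense. With this reformulation in hand, part~\ref{tda} becomes an immediate translation of Theorem~\ref{char}\ref{char3}: in both directions, ``$\lperv(f)$ is an isomorphism forces $f$ to be one'' is synonymous with ``every extremal monomorphism into a $T_0$ Pervin space whose symmetrization is dense is an isomorphism.''

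For the forward direction of~\ref{tdb}, I will assume $(X, \cS)$ is $T_D$ and take $f: (Y, \cT) \to (X, \cS)$ with $\lperv(f): \cS \to \cT$ an isomorphism. Injectivity of $f$ should follow from $(Y, \cT)$ being $T_0$ combined with surjectivity of $\lperv(f)$: given $y_1 \neq y_2$ in $Y$, some $T \in \cT$ separates them, and writing $T = f^{-1}(S)$ forces $f(y_1) \neq f(y_2)$. Surjectivity of $f$ is where the $T_D$ hypothesis enters: if some $x \in X$ lay outside $f[Y]$, Lemma~\ref{l:10}\ref{item:8} would furnish distinct $S_1, S_2 \in \cS$ with $S_1 \setminus \{x\} = S_2 \setminus \{x\}$, yielding $f^{-1}(S_1) = f^{-1}(S_2)$ and contradicting injectivity of $\lperv(f)$. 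Once $f$ is bijective and $\lperv(f)$ is surjective, every $T \in \cT$ has the form $f^{-1}(S)$ for some $S \in \cS$, and then $f[T] = S \in \cS$, so $f$ is a Pervin isomorphism.

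For the converse of~\ref{tdb}, I plan to argue by contraposition. If $(X, \cS)$ fails the condition of Lemma~\ref{l:10}\ref{item:8} at some point $x \in X$, then the map $\cS \to \cP(X \setminus \{x\})$ given by $S \mapsto S \setminus \{x\}$ is injective. Setting $Y := X \setminus \{x\}$ and $\cT := \{S \setminus \{x\} \mid S \in \cS\}$, the inclusion $i: (Y, \cT) \to (X, \cS)$ is a Pervin morphism whose image under $\lperv$ is precisely this restriction map, hence an isomorphism by construction and hypothesis. The subspace $(Y, \cT)$ inherits $T_0$ from $(X, \cS)$. However, $i$ is not surjective, so it is not an isomorphism, contradicting the assumed property.

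The only real obstacle is conceptual: recognizing that the combination ``extremal monomorphism with dense symmetrization'' packaged by Lemma~\ref{l:11} is exactly the condition appearing in Theorem~\ref{char}\ref{char3}, so that~\ref{tda} reduces to a known result, and that the crude subspace construction suffices in the converse of~\ref{tdb}. Everything else is routine verification, requiring only that restricting $\cS$ to $X \setminus \{x\}$ produces a bona fide $T_0$ Pervin space with $\lperv(i)$ inverting the restriction.
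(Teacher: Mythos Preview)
Your proposal is correct and follows essentially the same approach as the paper: Lemma~\ref{l:11} is the key tool, part~\ref{tda} reduces to Theorem~\ref{char}\ref{char3}, and the converse of~\ref{tdb} uses the same subspace $Y = X\setminus\{x\}$ construction. The only minor difference is in the forward direction of~\ref{tdb}: the paper obtains surjectivity of $f$ by noting that $\{x\}\in\overline{\cS}$ (via Lemma~\ref{l:10}\ref{item:7}) and invoking density of $\psym(f)$ from Lemma~\ref{l:11}\ref{item:5}, whereas you argue directly that a missed point would produce two elements of $\cS$ with the same preimage, contradicting injectivity of $\lperv(f)$---a slightly more elementary route that bypasses the symmetrization.
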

\begin{proof}
  Since $(X, \cS)$ is $T_0$, by Lemma~\ref{l:11}, $\lperv(f)$ is an
  isomorphism if and only if $f$ is an extremal monomorphism and
  $\psym(f)$ is dense. Thus, part~\ref{tda} follows from
  Theorem~\ref{char}\ref{char3}.

  To prove~\ref{tdb}, let us first suppose that $(X, \cS)$ is
  $T_D$ and let $f: (Y, \cT) \to (X, \cS)$ be such that $(Y,
  \cT)$ is $T_0$ and $\lperv(f)$ is an isomorphism. Since $(Y, \cT)$
  is $T_0$, by Lemma~\ref{l:11}\ref{item:6}, $f$ is an extremal
  monomorphism. Thus, it suffices to show that $f$ is an epimorphism,
  that is, surjective. Fix $x \in X$. Since $(X, \cS)$ is
  $T_D$, by Lemma~\ref{l:10}\ref{item:7}, the singleton
  $\{x\}$ belongs to $\overline{\cS}$. Since, by
  Lemma~\ref{l:11}\ref{item:5}, $\psym(f)$ is dense, it follows that
  $\{x\}$ intersects $f[Y]$ (recall Lemma~\ref{l:9}), that is, $x$
  belongs to the image of $f$ as we intended to show. Conversely, let
  $f: (Y, \cT) \hookrightarrow (X, \cS)$ be the extremal monomorphism
  induced by $Y := X\setminus \{x\}$, that is, $\cT = \{S \setminus
  \{x\} \mid S \in \cS\}$. Clearly, $(Y, \cT)$ is $T_0$ and $f$ is not
  an isomorphism. Thus, by assumption, $\lperv(f)$ cannot be
  injective. That is to say that, there exist distinct $S_1, S_2 \in
  \cS$ such that $S_1\setminus \{x\} = S_2 \setminus\{x\}$ and thus,
  by Lemma~\ref{l:10}\ref{item:8}, $(X, \cS)$ is $T_D$.
\end{proof}

We finish this section by exhibiting a pointfree version of
Proposition~\ref{td}, which will now involve the forgetful functor
$\lfrith: \ffrm \to \dlat$ (cf. Section~\ref{sec:forget}). Let us
start with a version of Lemma~\ref{l:11} for Frith frames.

\begin{lemma}\label{l:12}
  Let $h: (L, S) \to (M, T)$ be a homomorphism of Frith frames. Then,
  \begin{enumerate}
  \item\label{item:9} $\lfrith(h):S \to T$ is injective if and only if
    $\fsym(h): (\cC_SL, \overline{S}) \to (\cC_TM, \overline{T})$ is
    dense;
  \item\label{item:10} $\lfrith(h): S \to T$ is surjective if and only
    if $h$ is an extremal epimorphism.
  \end{enumerate}
\end{lemma}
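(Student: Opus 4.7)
The plan is to follow the pattern of the proof of Lemma~\ref{l:11}, replacing set-theoretic operations with their congruence-frame counterparts. Part~(2) is essentially tautological: since $\lfrith(h) \colon S \to T$ is the corestriction of $h$ to its target lattice, $\lfrith(h)$ is surjective if and only if $h[S] = T$, which by Proposition~\ref{p:4} is precisely the condition for $h$ to be an extremal epimorphism.

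For part~(1), I would first perform a reduction. Every element of $\cC_SL$ is a join of elements of the form $\nabla_a \wedge \Delta_s$ with $a \in L$ and $s \in S$, so, because $\overline{h}$ preserves arbitrary joins, density of $\overline{h}$ is equivalent to the implication $\overline{h}(\nabla_a \wedge \Delta_s) = 0 \Rightarrow \nabla_a \wedge \Delta_s = 0$ for all $a \in L$ and $s \in S$. Using the identity $\nabla_b \wedge \Delta_t = 0 \iff b \leq t$ (valid in any congruence frame, since $\Delta_t$ is the complement of $\nabla_t$ via $\nabla$) together with $\overline{h}(\nabla_a \wedge \Delta_s) = \nabla_{h(a)} \wedge \Delta_{h(s)}$, this reduces density of $\overline{h}$ to the condition
\[ \forall a \in L,\ \forall s \in S \colon \quad h(a) \leq h(s) \ \implies\ a \leq s. \]

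For the direction ``density implies $\lfrith(h)$ injective,'' I would apply the reduced condition with $a \in S$ to any pair $s_1, s_2 \in S$ with $h(s_1) = h(s_2)$ to obtain $s_1 \leq s_2$ and $s_2 \leq s_1$. Conversely, suppose $\lfrith(h)$ is injective. Then $\lfrith(h)$ is order-reflecting on $S$: indeed, if $h(t) \leq h(s)$ for $t, s \in S$, then $h(t \wedge s) = h(t) \wedge h(s) = h(t)$ implies $t \wedge s = t$, i.e., $t \leq s$. To prove the reduced condition for arbitrary $a \in L$, I would invoke the join-density of $S$ in $L$ to write $a = \bigvee\{t \in S \mid t \leq a\}$. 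If $h(a) \leq h(s)$, then every such $t$ satisfies $h(t) \leq h(s)$, hence $t \leq s$ by the order-reflecting property, and joining yields $a \leq s$.

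The main point of care, and the one place where the pointfree argument is not a verbatim translation of the Pervin one, is the reduction of density to its behaviour on the join-generators $\nabla_a \wedge \Delta_s$ of $\cC_SL$, combined with the use of join-density of $S$ in $L$ to transport the order-reflecting property of $h|_S$ from $S$ to pairs $(a, s)$ with $a \in L$ and $s \in S$. Both steps are short, but they are what makes the statement nontrivial despite the surface parallel with Lemma~\ref{l:11}.
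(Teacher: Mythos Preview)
Your proof is correct and follows essentially the same approach as the paper. The only organizational difference is where join-density of $S$ in $L$ is invoked: the paper uses it upfront to observe that $\cC_SL$ is already join-generated by the elements $\nabla_{s_1}\wedge\Delta_{s_2}$ with $s_1,s_2\in S$ (so density reduces directly to the order-reflecting condition on $S$), whereas you first reduce to generators $\nabla_a\wedge\Delta_s$ with $a\in L$ and only bring in join-density later to handle arbitrary $a$ in the backward implication. Both routes are short and equivalent; the paper's is marginally more economical.
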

\begin{proof} By definition of $\fsym$, we have that $\fsym(h)$ is
  dense if and only if for every $s_1, s_2 \in S$, having
  $\nabla_{h(s_1)}\wedge \Delta_{h(s_2)} = 0$ implies that
  $\nabla_{s_1} \wedge \Delta_{s_2} = 0$. But this is easily seen to
  be equivalent to having that $h(s_1)\leq h(s_2)$ implies that $s_1
  \leq s_2$, that is to say that $\lfrith (h)$ is injective. This
  proves~\ref{item:9}.  Part~\ref{item:10} is a straightforward
  consequence of the definition of $\lperv(h)$ and of the
  characterization of extremal epimorphisms of Frith frames
  (cf. Proposition~\ref{p:4}).
\end{proof}

We may then prove the following version of
Proposition~\ref{td}\ref{tda}. In particular, note that it provides
an alternative criterion for a Frith frame to be complete, which does
not depend on its symmetrization.
\begin{proposition}
  \label{tdfrm}
  A Frith frame $(L,S)$ is complete if and only if for every map
  $h:(M,T)\ra (L,S)$ such that $\lfrith(h)$ is an isomorphism, $h$ is
  an isomorphism.
\end{proposition}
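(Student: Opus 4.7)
The plan is to exploit Theorem~\ref{cfrith} together with naturality of the counit $c$ of the adjunction $\idlf \dashv \lfrith$, reducing the equivalence to a single observation about the canonical completion morphism $c_{(L,S)}:(\idl(S),S)\to(L,S)$ of~\eqref{eq:5}.

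For the backward direction, I would apply the hypothesis directly to $c_{(L,S)}$. Under the identification of the lattice component of $(\idl(S),S)$ with $S$ via $s\mapsto{\downarrow}s$, the morphism $\lfrith(c_{(L,S)})$ sends ${\downarrow}s\mapsto\bigvee{\downarrow}s=s$ and is therefore the identity on $S$, in particular an isomorphism. The hypothesis then forces $c_{(L,S)}$ itself to be an isomorphism of Frith frames, so $L\cong\idl(S)$, and Theorem~\ref{cfrith} concludes that $(L,S)$ is complete.

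For the forward direction, assume $(L,S)$ is complete and let $h:(M,T)\to(L,S)$ be a morphism such that $\phi:=\lfrith(h):T\to S$ is a lattice isomorphism. Naturality of $c$ at $\phi$ gives
\[
  h\circ c_{(M,T)} \;=\; c_{(L,S)}\circ\idlf(\phi).
\]
Completeness of $(L,S)$ makes $c_{(L,S)}$ an isomorphism by Theorem~\ref{cfrith}, and $\idlf(\phi)$ is an isomorphism because $\phi$ is, so the composition $h\circ c_{(M,T)}$ is an isomorphism. Now $c_{(M,T)}$ is always an extremal epimorphism of Frith frames, as it acts as the identity on lattice components (apply Proposition~\ref{p:4}); and from $h\circ c_{(M,T)}$ being an isomorphism one extracts the left inverse $(h\circ c_{(M,T)})^{-1}\circ h$ of $c_{(M,T)}$, whence $c_{(M,T)}$ is also monic. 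By Proposition~\ref{p:4}, a monic extremal epimorphism in $\ffrm$ is an isomorphism, so $c_{(M,T)}$ is iso and therefore $h=(h\circ c_{(M,T)})\circ c_{(M,T)}^{-1}$ is iso as well. The step requiring most care is this final extraction, which hinges on the Frith-frame characterization of isomorphisms in Proposition~\ref{p:4}; everything else is a formal consequence of Theorem~\ref{cfrith} and the adjunction $\idlf\dashv\lfrith$.
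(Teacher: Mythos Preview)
Your proof is correct and follows a genuinely different route from the paper's. The paper argues through the symmetrization: it invokes Lemma~\ref{l:12} to translate ``$\lfrith(h)$ is an isomorphism'' into ``$\fsym(h)$ is a dense extremal epimorphism'', and then appeals directly to the \emph{definition} of completeness (every dense extremal epimorphism from a symmetric Frith frame onto $\fsym(L,S)=(\cC_SL,\overline{S})$ is an isomorphism). Your argument bypasses symmetrization entirely, pivoting instead on the fact, implicit in Theorem~\ref{cfrith} and the discussion preceding it, that $(L,S)$ is complete precisely when the counit $c_{(L,S)}$ is an isomorphism, together with naturality of~$c$. This makes the backward direction a one-liner (apply the hypothesis to $c_{(L,S)}$), and reduces the forward direction to a clean factorisation argument in the naturality square $h\circ c_{(M,T)}=c_{(L,S)}\circ\idlf(\phi)$. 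The paper's approach has the merit of making explicit the link with the primitive definition of completeness via dense extremal epimorphisms; yours is more self-contained and avoids the congruence-frame machinery carried by~$\fsym$. One small point worth stating explicitly in your write-up: the implication ``$(L,S)$ complete $\Rightarrow$ $c_{(L,S)}$ iso'' is not literally the statement of Theorem~\ref{cfrith}\ref{item:4}, but follows from it via coherence (compactness of each $s\in S$ forces $J\mapsto\bigvee J$ to be injective).
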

\begin{proof}
  We first observe that $(L, S)$ is complete if and only if every
  dense extremal epimorphisms $(K, C) \to (\cC_SL, \overline{S})$,
  with $(K, C)$ symmetric, is an isomorphism. Suppose that $(L, S)$ is
  complete and let $h:(M,T)\ra (L,S)$ be such that $\lfrith(h)$ is an
  isomorphism. By Lemma~\ref{l:12}\ref{item:10}, $h$ is an extremal
  epimorphism. Thus, we only need to show that it is also a
  monomorphism, that is, injective. Consider the homomorphism
  $\fsym(h): (\cC_TM, \overline{T}) \to (\cC_SL, \overline{S})$. Then,
  $\fsym(h)$ is an extremal epimorphism because so is $h$ and, by
  Lemma~\ref{l:12}\ref{item:9}, it is dense. Thus, since $(L, S)$ is
  complete, it is an isomorphism and, in particular, injective. Thus,
  its restriction $h$ is also injective as required. Conversely, let
  $h:(K, C) \to (\cC_SL, \overline{S})$ be a dense extremal
  epimorphism, with $(K, C)$ symmetric. Since $h$ is already a
  morphism in $\sffrm$, and thus, $\fsym(h) = h$, by Lemma~\ref{l:12},
  $\lfrith(h)$ is an isomorphism. Thus, by hypothesis, $h$ is an
  isomorphism, which proves that $(L, S)$ is complete.
\end{proof}

In order to get an analogue of Proposition~\ref{td}\ref{tdb}, we need
to introduce the notion of \emph{locale-based} Frith frame, which will
replace the $T_D$ axiom in our statement.

We say that a Frith frame $(L, S)$ is \emph{locale-based} if the
smallest sublocale of $L$ that contains~$S$ is the whole frame $L$. It
is not hard to see that such sublocale consists of the arbitrary meets
of elements of the form $a \to s$, where $a \in L$ and $s \in S$. As a
consequence, we have the following characterization of locale-based
Frith frames, that we state for later reference.

\begin{lemma}\label{l:14}
  A Frith frame is locale-based if and only if, for every $a \in L$,
  the following equality holds:
  \[a = \bigwedge \{b \to s \mid b \in L, \, s \in S, \text{ and } a
    \leq b \to s\}.\]
\end{lemma}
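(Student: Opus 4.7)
The plan is to rely on the explicit description, sketched in the paragraph preceding the lemma, of the smallest sublocale $K$ of $L$ containing $S$: namely, $K$ consists of all arbitrary meets of elements of the form $b \to s$ with $b \in L$ and $s \in S$. By definition, $(L,S)$ is locale-based precisely when $K = L$, so the lemma becomes a matter of translating ``every $a \in L$ is such a meet'' into the specific canonical meet displayed in the statement.

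For the forward direction, I would assume $(L,S)$ is locale-based, fix $a \in L$, and use $a \in K$ to write $a = \bigwedge_{i \in I}(b_i \to s_i)$ for some family $\{(b_i, s_i)\}_{i \in I} \subseteq L \times S$. Each $b_i \to s_i$ satisfies $a \leq b_i \to s_i$, so the family $\{b_i \to s_i\}_{i \in I}$ is contained in $U := \{b \to s \mid b \in L,\ s \in S,\ a \leq b \to s\}$. This forces $\bigwedge U \leq \bigwedge_{i \in I}(b_i \to s_i) = a$, while the reverse inequality $a \leq \bigwedge U$ is immediate from the defining condition on $U$.

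For the converse, the displayed equality exhibits every $a \in L$ directly as an arbitrary meet of elements of the form $b \to s$, so $a \in K$ and thus $K = L$.

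The only point one might wish to verify along the way, if one is uncomfortable invoking the preceding paragraph as a black box, is that the set $K' := \{\bigwedge_{i} (b_i \to s_i) \mid b_i \in L,\ s_i \in S\}$ really is the smallest sublocale containing $S$. Taking $b = 1$ shows $S \subseteq K'$; closure of $K'$ under arbitrary meets is immediate from the definition; and closure under $c \to (-)$ for $c \in L$ uses the standard identity $c \to \bigwedge_{i}(b_i \to s_i) = \bigwedge_{i}(c \wedge b_i) \to s_i$. Any sublocale containing $S$ must contain every $b \to s$ and hence every meet of such elements, so $K'$ is minimal. This is the only mildly non-trivial point; the rest of the argument is just unfolding definitions.
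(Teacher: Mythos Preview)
Your proposal is correct and follows exactly the approach the paper indicates: the paper does not include an explicit proof of this lemma, but states in the preceding paragraph that the smallest sublocale containing $S$ consists of arbitrary meets of elements $b \to s$, and then presents the lemma as an immediate consequence. Your write-up simply fills in the routine details of that deduction (including the optional verification of the sublocale description), so there is no meaningful difference in strategy.
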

The following technical result will also be useful:

\begin{lemma}\label{l:13}
  Let $h: (L, S) \to (M, T)$ be a morphism of Frith frames. If
  $\lfrith(h)$ is injective, then $h_* \circ h (s) = s$, for every $s
  \in S$.
\end{lemma}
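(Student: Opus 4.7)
The plan is to combine the usual adjunction inequality $h_*h(s) \ge s$ with its reverse, deriving the latter from join-density of $S$ in $L$ together with the fact that an injective lattice homomorphism is order-reflecting.

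First I would fix $s \in S$ and set $a := h_*h(s)$. From the adjunction $h \dashv h_*$ we always have $a \ge s$, so the task reduces to proving $a \le s$. Applying $h$ to $a$, the standard identity $h h_* h = h$ for an adjoint pair yields $h(a) = h(s)$.

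Next I would exploit the defining property of a Frith frame, namely that $S$ is join-dense in $L$. Therefore $a = \bigvee\{s' \in S \mid s' \le a\}$. For each such $s'$, monotonicity of $h$ together with $h(a) = h(s)$ gives $h(s') \le h(s)$. Since $\lfrith(h) \colon S \to T$ is a \emph{lattice} homomorphism, its injectivity makes it order-reflecting: $h(s') \le h(s)$ is equivalent to $h(s' \wedge s) = h(s')$, which by injectivity forces $s' \wedge s = s'$, i.e.\ $s' \le s$. Consequently $a = \bigvee\{s' \in S \mid s' \le a\} \le s$, and combining with $a \ge s$ yields $h_*h(s) = s$, as required.

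The only subtlety is step establishing that $\lfrith(h)$ is order-reflecting from injectivity, but this is immediate from the general fact about injective lattice homomorphisms. Everything else is a routine application of the adjunction and of join-density, so I do not expect any real obstacle in this argument.
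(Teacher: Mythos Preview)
Your argument is correct and follows the same core idea as the paper: from the adjunction identity $h\circ h_*\circ h = h$ one obtains $h(h_*h(s)) = h(s)$, and then injectivity of $\lfrith(h)$ is invoked to conclude $h_*h(s)=s$. The paper's proof is just these two sentences and leaves implicit the passage from ``injective on $S$'' to the desired equality (since $h_*h(s)$ need not lie in $S$ a priori); your use of join-density of $S$ together with order-reflection makes that step explicit and rigorous.
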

\begin{proof}
  Since $h_*$ is right adjoint to~$h$, we have $h \circ h_* \circ h =
  h$. Thus, the claim follows from injectivity of $\lfrith(h)$.
\end{proof}

We are now ready to show the already announced analogue of
Proposition~\ref{td}\ref{tdb}.

\begin{proposition}\label{cotdfrm}
  A Frith frame $(L,S)$ is locale-based if and only if for every map
  $h:(L,S)\ra (M,T)$ such that $\lfrith(h)$ is an isomorphism, $h$ is
  an isomorphism.
\end{proposition}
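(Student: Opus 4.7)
The plan is to prove both directions, with the forward direction ($(L,S)$ locale-based implies the rigidity property) being the more substantive one.

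For the forward direction, suppose $(L,S)$ is locale-based and let $h\colon (L,S)\to (M,T)$ be a morphism such that $\lfrith(h)$ is an isomorphism. First I would invoke Lemma~\ref{l:12}\ref{item:10} to conclude that $h$ is an extremal epimorphism, so $h[S]=T$; since $S$ join-generates $L$ and $T$ join-generates $M$, this forces the underlying frame map to be surjective. It then remains to show that $h$ is injective as a frame map, for which it suffices to prove that $h_* \circ h = \mathrm{id}_L$.

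The key step is to extend the equality $h_* \circ h(s)=s$ for $s\in S$ (which follows from Lemma~\ref{l:13} applied to $\lfrith(h)$ injective) to all of $L$ via the locale-based assumption. By Lemma~\ref{l:14}, every $a\in L$ equals $\bigwedge\{b\to s\mid b\in L,\, s\in S,\, a\le b\to s\}$. For each such pair $(b,s)$, the inequality~\eqref{eq:1} gives $h(b\to s)\le h(b)\to h(s)$, so, applying $h_*$ (which preserves meets) and using the Frobenius identity~\eqref{eq:2} together with $h_*h(s)=s$, we obtain
\[
h_*\bigl(h(b\to s)\bigr)\le h_*\bigl(h(b)\to h(s)\bigr)= b\to h_*h(s) = b\to s.
\]
Since $h(a)\le h(b\to s)$ for every such $(b,s)$, taking meets yields $h_*h(a)\le \bigwedge\{b\to s\mid a\le b\to s\}=a$. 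Combined with the unit inequality $a\le h_*h(a)$, this forces $h_*h(a)=a$, as desired. Hence $h$ is an injective frame homomorphism with $h[S]=T$, so it is an isomorphism in $\ffrm$.

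For the backward direction, I would argue contrapositively: assume $(L,S)$ is not locale-based, let $K$ be the smallest sublocale of $L$ containing $S$, so $K\subsetneq L$, and let $q\colon L\twoheadrightarrow K$ denote the associated frame quotient, which is the identity on $K$ (in particular on $S$). To package $q$ as a morphism of Frith frames $(L,S)\to (K,S)$, I need to check that $S$ is join-dense in $K$: given $k\in K\subseteq L$, write $k=\bigvee^{L} S'$ for some $S'\subseteq S$ by join-density of $S$ in $L$; applying $q$ and using that $q$ preserves joins and fixes $k$ and every element of $S$ gives $k = \bigvee^{K} S'$. Then $q\colon (L,S)\to (K,S)$ is a morphism with $\lfrith(q)=\mathrm{id}_S$, an isomorphism, whereas $q$ itself is not injective (since $K\neq L$) and hence, by Proposition~\ref{p:4}, not an isomorphism in $\ffrm$. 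The main subtlety to watch out for is the distinction between joins computed in $L$ and in $K$, which is exactly what makes the join-density verification for $(K,S)$ go through cleanly.
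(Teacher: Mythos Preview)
Your proof is correct and follows the same overall outline as the paper's.  The backward direction is identical in content: the paper also considers the frame quotient $q:(L,S)\to(\langle S\rangle_{\bf Loc},S)$ onto the smallest sublocale containing~$S$, checks that $(\langle S\rangle_{\bf Loc},S)$ is a Frith frame, and uses that $\lfrith(q)$ is the identity (it phrases this directly rather than contrapositively, but the construction is the same).

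In the forward direction the paper takes a slightly more conceptual route than yours.  Instead of invoking Lemma~\ref{l:14} and computing with \eqref{eq:1} and the Frobenius identity~\eqref{eq:2}, it simply observes that $h_*[M]$ is a sublocale of~$L$ (namely the fixpoints of the nucleus $h_*h$), notes that Lemma~\ref{l:13} gives $S\subseteq h_*[M]$, and then locale-basedness forces $h_*[M]=L$, i.e.\ $h_*h=\mathrm{id}_L$.  Your explicit computation is exactly what underlies this: you are verifying by hand that every element $b\to s$ (and hence every meet of such, by Lemma~\ref{l:14}) lies in the fixpoint set of $h_*h$.  The two arguments are equivalent in substance; yours is more hands-on, the paper's more structural.
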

\begin{proof}
  Let $(L, S)$ be a locale-based Frith frame, and $h: (L, S) \to (M,
  T)$ be a homomorphism such that $\lfrith(h)$ is an isomorphism. By
  Lemma~\ref{l:12}\ref{item:10}, we know that $h$ is an extremal
  epimorphism. It remains to show it is injective or, equivalently,
  that $h_* \circ h (a) \leq a$, for every $a \in L$.  It remains to show it is injective or, equivalently,
  that $h_* \circ h (a) \leq a$, for every $a \in L$. By Lemma \ref{l:13} we know that $S\se h_*[M]$. This is also a sublocale inclusion, as $S\se L$ is, $L$ being locale-based. As $h_*[M]\se L$ is a sublocale then it must be $L$, as $L$ is locale-based. 

  For the converse, suppose that $(L,S)$ is such that all maps
  $h:(L,S)\ra (M,T)$ such that $\lfrith(h)$ is an isomorphism are
  isomorphisms. We let $\langle S\rangle_{\bf Loc}$ be the smallest
  sublocale of $L$ that contains $S$. Since $\langle S\rangle_{\bf
    Loc}$ is a subposet of $L$, each join computed in $\langle
  S\rangle_{\bf Loc}$ is greater than or equal to the same join
  computed in $L$. Thus, $(\langle S\rangle_{\bf Loc}, S)$ is also a
  Frith frame. Finally, we let $q: L \twoheadrightarrow \langle
  S\rangle_{\bf Loc}$ be the frame quotient defined by the sublocale
  embedding $\langle S\rangle_{\bf Loc} \hookrightarrow L$. Since the
  restriction of $q$ to $\langle S\rangle_{\bf Loc}$ is the identity,
  $q$ induces a morphism of Frith frames $q: (L, S) \to (\langle
  S\rangle_{\bf Loc}, S)$. Clearly we have that $\lfrith(q)$ is an
  isomorphism, and so, by hypothesis, $h$ is an isomorphism. This
  proves that $(L,S)$ is locale-based, as required.
\end{proof}

\section{Declarations}
\subsection{Author’s Contribution}
Both authors contributed to all stages of the paper.
\subsection{Conflict of Interests}
None
\subsection{Availability of Data and Materials}
Not applicable
\subsection{Funding}
C\'{e}lia Borlido was partially supported by the Centre for Mathematics of the University of Coimbra -
UIDB/00324/2020, funded by the Portuguese Government through FCT/MCTES. Anna Laura Suarez received funding from the European Research Council (ERC) under the European Union’s Horizon 2020 research and innovation
program (grant agreement No.670624).

\bibliographystyle{acm}

\end{document}